\newtheorem{theorem}{Theorem}[section]
\newtheorem{lemma}[theorem]{Lemma}
\newtheorem{corollary}[theorem]{Corollary}
\theoremstyle{definition}
\newtheorem{definition}[theorem]{Definition}
\newtheorem{remark}[theorem]{Remark}
\def\CC{\mathbb C}
\def\PP{\mathbb P}
\def\ord{{\rm ord}}
\def \O {\mathcal{O}}
\DeclareMathOperator{\codim}{codim}
\begin{document}
\title[Greatest common divisors and Nevanlinna theory on algebraic tori]{Greatest common divisors of analytic functions and Nevanlinna theory on algebraic tori}

\author[Aaron Levin]{Aaron Levin }
\thanks{The first author was supported in part by NSF grant DMS-1352407.}
\thanks{The second author was supported in part by Taiwan's MoST grant 106-2115-M-001-001-MY2.}
\address{Department of Mathematics\\Michigan State University\\East Lansing, MI 48824, USA}
\email{adlevin@math.msu.edu}
\author{Julie Tzu-Yueh Wang}
\address{Institute of Mathematics\\ Academia Sinica\\Taipei 10617, Taiwan}
\email{jwang@math.sinica.edu.tw}
\date{}
\begin{abstract}
We study upper bounds for the counting function of common zeros of two meromorphic functions in various contexts.  The proofs and results are inspired by recent work involving greatest common divisors in Diophantine approximation, to which we introduce additional techniques to take advantage of the stronger inequalities available in Nevanlinna theory.  In particular, we prove a general version of a conjectural ``asymptotic gcd" inequality of Pasten and the second author, and consider moving targets versions of our results.

\end{abstract}
\thanks{2010\ {\it Mathematics Subject Classification}: Primary 32H30; Secondary 11J97}
%\keywords{non-Archimedean Picard theorem, non-Archimedean analytic curves}
%\thanks{The second named author was supported in part by the NSF grant DMS-0635607.}
%\thanks{The first named author was supported in part by NSF grants DMS-1102563 and DMS-1352407.  The second named author was supported in part by Taiwan's MOST grant 103-2115-M-001-002-MY3.}
\normalsize
\baselineskip=15pt

\maketitle

\section{Introduction}

We prove upper bounds for the counting function of common zeros of two meromorphic functions in various contexts.  A starting point for such results (in a geometric formulation) comes from the study of holomorphic curves in semi-abelian varieties by Noguchi, Winkelmann, and Yamanoi, who proved the following:

\begin{theorem}[Noguchi, Winkelmann, Yamanoi {\cite[Th.~5.1]{NWY} (see also \cite[\S 6.5]{NWbook})}]
\label{tNWY}
Let $f:\mathbb{C}\to A$ be a holomorphic map to a semi-abelian variety $A$ with Zariski-dense image. Let $Y$ be a closed subscheme of $A$ with $\codim Y\geq 2$ and let $\epsilon>0$.  
\begin{enumerate}
\item  Then
\begin{align*}
N_{f}(Y,r)\leq_{\rm exc} \epsilon T_f(r).
\end{align*}
\item There exists a compactification $\overline{A}$ of $A$, independent of $\epsilon$, such that for the Zariski closure $\overline{Y}$ of $Y$ in $\overline{A}$,
\begin{align*}
T_{\overline{Y},f}(r)\leq_{\rm exc} \epsilon T_f(r)
\end{align*}
\end{enumerate}
\end{theorem}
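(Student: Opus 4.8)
The plan is to follow the strategy of Noguchi, Winkelmann and Yamanoi, whose engine is the Second Main Theorem with \emph{truncated} counting functions for holomorphic curves into semi-abelian varieties: for $f\colon\mathbb{C}\to A$ with Zariski-dense image, a smooth equivariant compactification $\overline{A}$ with normal-crossings boundary $\partial A$, and an effective divisor $D$ on $\overline{A}$, one has $T_{D,f}(r)\leq_{\rm exc} N_f^{(k_0)}(D,r)+\epsilon T_f(r)$ for a suitable truncation level $k_0$, with the logarithmic derivative estimate $m_{\partial A,f}(r)=o(T_f(r))$ built in. The real difficulty is that $Y$ has codimension $\geq 2$ while this input only sees divisors, and one cannot simply replace $Y$ by a divisor $D\supseteq\overline{Y}$: the complement $A\setminus Y$ need not be of log general type (already $\mathbb{G}_m^2$ minus a point is not), so no naive log-type Second Main Theorem on a modification can yield the conclusion --- the rigidity of entire curves in semi-abelian varieties, i.e.\ the logarithmic Bloch--Ochiai theorem forcing $\overline{f(\mathbb{C})}=A$, must be used essentially.

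First I would make the usual structural reductions. Since $Y$ has finitely many irreducible components and $N_f(Y,r)$ is dominated by $N_f(Y_i,r)$ for each component $Y_i$, I may assume $Y$ is irreducible and reduced. Pulling back along quotient homomorphisms $A\to A/B$ by sub-semi-abelian varieties $B$ lets me assume $Y$ is not the preimage of a proper subscheme of a lower-dimensional quotient; in particular $Y$ has trivial connected stabilizer and generates $A$. I then fix, once and for all (independently of $\epsilon$, as (b) demands), a smooth equivariant compactification $\overline{A}$ with normal-crossings boundary $\partial A$, pass to the blow-up $\pi\colon\hat{A}\to\overline{A}$ along the ideal sheaf of the closure $\overline{Y}$ with exceptional divisor $E$, and to the lift $\hat{f}\colon\mathbb{C}\to\hat{A}$, which exists because $\codim \overline{Y}\geq 2$ and $f$ is Zariski-dense. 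Then $N_f(Y,r)=N_{\hat f}(E,r)+O(1)$ and $T_{\overline{Y},f}(r)=T_{E,\hat f}(r)+O(1)$, so that (a) is the assertion $N_{\hat f}(E,r)\leq_{\rm exc}\epsilon T_f(r)$ and (b) is the stronger assertion $T_{E,\hat f}(r)\leq_{\rm exc}\epsilon T_f(r)$, which in addition controls the proximity $m_{E,\hat f}(r)$ of $\hat f$ to $E$.

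The heart of the matter --- and the step I expect to be the main obstacle --- is to turn the codimension-$\geq 2$ condition into divisorial data that the truncated semi-abelian Second Main Theorem can digest. The guiding idea is that a zero $z_0$ of $f^{*}\mathcal{I}_{Y}$ is far from a generic point of any single divisor through $\overline{Y}$: writing $\overline{Y}$ as a component of $\bigcap_{i=1}^{m}D_i$ for effective divisors $D_i$ in general position with respect to $\partial A$, the intersections of $f$ with each $D_i$ are controlled by the Second Main Theorem, but --- because $D_i$ moves in the orbit $\{a+D_i:a\in A\}$, which sweeps out $A$ --- those intersections are ``equidistributed'' along $D_i$, so their \emph{simultaneous} occurrence, which is what detects $Y$, should be negligible. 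Concretely I would apply the truncated Second Main Theorem to $\sum_i D_i+\partial A$, absorb the boundary with the logarithmic derivative estimate, and then --- the essential trick --- invoke a moving-targets Second Main Theorem for the pencils generated by the $D_i$ and by translation, the Zariski-density of $f$ (through logarithmic Bloch--Ochiai) supplying the nondegeneracy it requires; a careful accounting of truncated counting functions across this family then isolates the ``diagonal'' locus $Y$ and forces $N_f(Y,r)\leq_{\rm exc}\epsilon T_f(r)$, which is (a). Finally, (b) follows by running the semi-abelian Second Main Theorem once more against an auxiliary divisor adapted to $E$ and $\partial A$, upgrading the counting estimate for $E$ to the height estimate $T_{E,\hat f}(r)\leq_{\rm exc}\epsilon T_f(r)$; since $\overline{A}$ was fixed before $\epsilon$, the compactification is independent of $\epsilon$, as required.
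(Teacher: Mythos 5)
The statement you are proving is quoted in the paper from Noguchi--Winkelmann--Yamanoi; the paper gives no proof of it in this generality. Its own contribution (Theorem \ref{RefinementII}) is a new proof, with a refinement, only in the case $A=(\CC^*)^n$, by an entirely different route: Vojta's form of Cartan's second main theorem with the Wronskian term (Theorem \ref{gsmt}), a combinatorial basis lemma for $(F,G)_m$ via monomial orderings (Lemma \ref{mainlemma}), the resulting key estimate (Theorem \ref{Refinement}), and Theorem \ref{Theorem10.3} for the proximity part of (b). So your proposal has to be judged as a sketch of the original NWY argument, not of anything the paper actually does.

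Judged that way, it has a genuine gap exactly where you place ``the heart of the matter''. Writing $\overline{Y}$ inside $\bigcap_i D_i$ and applying the truncated second main theorem to $\sum_i D_i+\partial A$ bounds each $N_f(D_i,r)$ only by a quantity comparable to $T_f(r)$; nothing in that inequality detects the \emph{simultaneity} of the zeros, which is the entire content of $N_f(Y,r)\leq_{\rm exc}\epsilon T_f(r)$. The passage from there to the conclusion is carried solely by the phrases ``equidistributed'', ``moving-targets Second Main Theorem for the pencils generated by the $D_i$ and by translation'', and ``a careful accounting \dots isolates the diagonal locus'': no such moving-target second main theorem on a semi-abelian variety is stated in a usable form, the required nondegeneracy is not verified beyond an appeal to Bloch--Ochiai, and no mechanism that actually produces the factor $\epsilon$ is exhibited (the genuine NWY proof does not proceed this way; it uses jet lifts, stabilizer/quotient reductions and a delicately constructed compactification). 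Part (b) suffers the same defect: the exceptional divisor $E$ lives on the blow-up $\hat{A}$, which is not an equivariant compactification of a semi-abelian variety, so the semi-abelian second main theorem cannot simply be ``run once more against an auxiliary divisor adapted to $E$ and $\partial A$''; bounding $m_{\hat{f}}(E,r)$, equivalently the proximity of $f$ to $Y$, is precisely the hard content of (b), and even in the paper's torus case it needs a separate argument (Theorem \ref{Theorem10.3} combined with the toric Weil-function decomposition imported from \cite{levin2017}). Your structural reductions and the blow-up reformulation are fine, but the core of the proof is missing.
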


Here $N_f(Y,r)$ is a counting function associated to $f$ and $Y$, $T_{\overline{Y},f}(r)$ is a Nevanlinna characteristic (or height) function associated to $f$ and $\overline{Y}$, and $T_f(r)$ is any characteristic function associated to an appropriate ample line bundle (see \cite{NWY} for more discussion and Section \ref{Preliminary} for the relevant definitions from Nevanlinna theory).  The notation $ \le_{\rm exc}$ means that the estimate holds for all $r$ outside a set of finite Lebesgue measure, possibly depending on $\epsilon$. 

More generally, Noguchi, Winkelmann, and Yamanoi proved a result for $k$-jet lifts of holomorphic maps to semi-abelian varieties.  The case when $A$ is an abelian variety was proved by Yamanoi \cite{Yam}.  

A first goal of our work is to obtain a new proof of Theorem \ref{tNWY} when $A=(\mathbb{C}^*)^n$ is the complex algebraic torus.  In this case we obtain the following refinement of Theorem \ref{tNWY}.

\begin{theorem}\label{RefinementII}
Let $Y$ be a closed subscheme of $(\CC^*)^n$ of codimension at least $2$.  Let   ${\bf g}=(g_1,\cdots,g_n)$ be a holomorphic map from $\CC$ to $(\CC^*)^n$ with Zariski dense image (equivalently, $g_1,\dots,g_n $ are entire functions without zeros, and  $g_1^{i_1} \cdots g_n^{i_n} \notin\CC$ for any 
index set $(i_1,\dots,i_n)\in\mathbb Z^n\setminus \{(0,\dots,0)\}$).  Let $\epsilon>0$.
\begin{enumerate}
\item Then
 $$
 N_{\bf g}(Y,r)\le_{\operatorname{exc}}  \epsilon T_{\bf g}(r).
$$
\label{refa}
\item 
Let $X$ be a nonsingular projective toric compactification of $(\CC^*)^n$.  Let $\overline{Y}$ be the Zariski closure of $Y$ in $X$, and suppose that $\overline{Y}$ is in general position with the boundary of $(\CC^*)^n$ in $X$.  Then
\begin{align*}
T_{\overline{Y},\bf g}(r)\leq_{\operatorname{exc}} \epsilon T_{\bf g}(r).
\end{align*}
\label{refb}
\end{enumerate}
\end{theorem}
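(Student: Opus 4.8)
The plan is to reduce the statement to a \emph{gcd estimate} for two regular functions on $(\CC^*)^n$ and then to run a second main theorem argument built from Cartan's theorem applied to a carefully chosen auxiliary holomorphic curve, exploiting crucially that the coordinate functions of ${\bf g}$ are units (so ${\bf g}$ avoids the toric boundary).

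\emph{Step 1: reduction to two divisors.} Since $\codim Y\geq 2$, intersecting with two generic hypersurfaces through $Y$ we may assume $\codim Y=2$, and by prime avoidance we may pick Laurent polynomials $a,b$ on $(\CC^*)^n$ with no common factor and $(a,b)\subseteq\mathcal I_Y$; equivalently, hypersurfaces $D_1=\dv a$, $D_2=\dv b$ through $Y$ with no common component. At each $z\in\CC$ the local ring is a discrete valuation ring, in which the colength of $(a({\bf g}),b({\bf g}))$ is $\min(\ord_z a({\bf g}),\ord_z b({\bf g}))$; hence $N_{\bf g}(Y,r)\le N_{\bf g}(D_1\cap D_2,r)$, the ``gcd counting function'' of $a({\bf g})$ and $b({\bf g})$, and, since heights of subschemes are monotone in the ideal, $T_{\overline Y,{\bf g}}(r)\le T_{\overline{D_1\cap D_2},{\bf g}}(r)+O(1)$. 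For (b) one first replaces $a,b$ by generic sections of a suitable very ample system vanishing on $Y$, arranging that $D_1,D_2$, and hence $\overline{D_1\cap D_2}$, are also in general position with the boundary of $X$. (Equivalently, if $\pi\colon\widetilde X\to X$ is the blow-up of $X$ along $\overline Y$ with exceptional divisor $E$, then $T_{\overline Y,{\bf g}}(r)=T_{E,\widetilde{\bf g}}(r)+O(1)$ for the lift $\widetilde{\bf g}$, so we are bounding the height relative to the non-big divisor $E$.) It thus suffices to bound $N_{\bf g}(D_1\cap D_2,r)$, and for (b) also $T_{\overline{D_1\cap D_2},{\bf g}}(r)$.

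\emph{Step 2: the auxiliary curve.} Fix an ample $L$ on $X$ and a large $N$, and filter $H^0(X,NL)=W_0\supseteq W_1\supseteq\cdots$ by order of vanishing along $\overline Y$ (membership in powers of $\mathcal I_{D_1}+\mathcal I_{D_2}$). A section in $W_k$ pulls back under ${\bf g}$ to a function vanishing to order $\ge k\cdot\min(\ord_z a({\bf g}),\ord_z b({\bf g}))$ at any common zero $z$ of $a({\bf g}),b({\bf g})$. Because $\codim\overline Y=2$ forces the image of $\overline Y$ under $X\dashrightarrow\PP(H^0(X,NL)^\vee)$ to have the smallest dimension compatible with $h^0(X,NL)$, a basis $s_0,\dots,s_M$ of $H^0(X,NL)$ adapted to $W_\bullet$ (with $s_i\in W_{\mu_i}\setminus W_{\mu_i+1}$) has total weight $\sum_i\mu_i\ge c_N\,(M+1)$ with $c_N\to\infty$ as $N\to\infty$; this is exactly where codimension \emph{two} (rather than one, where the analogous hyperplanes are forced into subgeneral position and no gain is possible) is used. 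Let $\Phi\colon\CC\to\PP^M$ be the curve induced by ${\bf g}$ and $s_0,\dots,s_M$; it is linearly nondegenerate by Zariski-density of ${\bf g}$, and $T_\Phi(r)=N\,T_{{\bf g}}(r)+O(1)$.

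\emph{Step 3: Cartan's theorem.} Apply Cartan's second main theorem, in its truncated form (error term $\epsilon T_\Phi(r)$), to $\Phi$ against the hyperplanes: (i) the $M+1$ coordinate hyperplanes of $\PP^M$; (ii) the hyperplanes of $|NL|$ cut by the toric boundary divisors of $X$; (iii) the hyperplanes $\{s_i=0\}$. Since the coordinates of ${\bf g}$ are units, ${\bf g}$ misses the boundary of $X$, so the hyperplanes in (i)--(ii) are met by $\Phi$ with vanishing counting functions and contribute essentially the full $T_\Phi(r)$ to $\sum m_\Phi$; using the general-position hypothesis on $\overline Y$ to control the (sub)general-position constant of the whole configuration, the slack left in Cartan's inequality forces $\big(\sum_i\mu_i\big)m_{\overline Y,{\bf g}}(r)\le_{\operatorname{exc}}(\epsilon+o_N(1))T_\Phi(r)$, and with $\sum_i\mu_i\ge c_N(M+1)$ and $T_\Phi(r)\asymp N T_{{\bf g}}(r)$ this gives $m_{\overline Y,{\bf g}}(r)\le_{\operatorname{exc}}\epsilon' T_{{\bf g}}(r)$ after choosing $N$ large. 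The truncated form, applied to the same data, converts the high-order vanishing of the $s_i$ along $\overline Y$ at the preimages of $\overline Y$ into a contribution to the ramification (Wronskian) term, and the same weight bound yields $N_{\overline Y,{\bf g}}(r)\le_{\operatorname{exc}}\epsilon' T_{{\bf g}}(r)$. Adding, $T_{\overline Y,{\bf g}}(r)=m_{\overline Y,{\bf g}}(r)+N_{\overline Y,{\bf g}}(r)\le_{\operatorname{exc}}\epsilon T_{{\bf g}}(r)$, which is (b); part (a) follows either from the same argument (the proximity bound being superfluous for the counting-function estimate) or from (b) and the First Main Theorem, after choosing — via a toric resolution — a compactification with respect to which $\overline Y$ is in general position with the boundary.

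\emph{Main obstacle.} Two points are delicate. First, the hyperplane configuration in Step 3 is not obviously in general position — the adapted basis is, by construction, in \emph{subgeneral} position — so one must check that, after choosing $D_1,D_2$ generically and using $\codim\overline Y\geq 2$ together with general position relative to the boundary, the subgeneral-position constant stays small enough for Nochka's weights (hence Cartan's theorem) to give the $\epsilon T_\Phi(r)$ bound rather than a fixed multiple of $T_\Phi(r)$; this is false in codimension one and is the structural heart of why the hypotheses are what they are. Second, and more seriously, Cartan's theorem bounds only the proximity $\sum m_\Phi$, whereas the theorem demands a bound on the full characteristic, i.e.\ also on $N_{\overline Y,{\bf g}}(r)$ with the small constant $\epsilon$; extracting this is precisely where the \emph{truncated} second main theorem of Nevanlinna theory, with its $\epsilon T_\Phi(r)$ error term (for which there is no adequate Diophantine analogue), is indispensable, and the bookkeeping ensuring that the truncation level — which is tied to the large quantity $M=h^0(X,NL)-1$ — does not swamp the weights $\mu_i$ is the main technical burden.
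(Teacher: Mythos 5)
Your Step 1 (reducing to a gcd-type bound for two coprime polynomial pullbacks) matches the paper's first move, but the engine in Steps 2--3 has a genuine gap at exactly the point you defer as ``the main technical burden'', and that point is the heart of the theorem, not bookkeeping: it is the bound on the counting function. Your auxiliary curve comes from a basis of \emph{all} of $H^0(X,NL)$ adapted to the filtration by vanishing along $\overline Y$, and you propose to recover $N_{\overline Y,{\bf g}}(r)$ from the ramification/truncation term of Cartan's theorem. Quantitatively this cannot work as stated: at a common zero $z$ of $a({\bf g}),b({\bf g})$ of gcd-order $\nu_z$, the Wronskian estimate gives only $\ord_z W\ \ge\ \nu_z\sum_i\mu_i-\tfrac12M(M+1)$, and the truncated counting functions likewise only detect vanishing beyond level $M$. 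Here $M+1=h^0(X,NL)\sim N^n$, while the maximal vanishing order of a section of $NL$ along a codimension-two subscheme is $O(N)$, so $\sum_i\mu_i=O(N\cdot M)=O(N^{n+1})$ (your $c_N$ is of order $N$, not of order $M$). Hence the per-point loss $\tfrac12M(M+1)\sim N^{2n}$ swamps the gain $\nu_z\sum_i\mu_i$ at every common zero of bounded multiplicity, and neither the Wronskian form (Theorem \ref{gsmt}) nor the truncated form (Theorem \ref{tsmt}) applied to your configuration yields $N_{\overline Y,{\bf g}}(r)\le_{\rm exc}\epsilon T_{\bf g}(r)$. Without that, part (a) and the counting half of part (b) are unproven; the proximity half, which your filtration argument can plausibly deliver (and where your Nochka/subgeneral-position worries live), is the easier half. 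Your fallback of deducing (a) from (b) by choosing a toric compactification in which $\overline Y$ is automatically in general position with the boundary is also unjustified.

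The paper closes this gap by a structurally different mechanism (Theorem \ref{Refinement}): the auxiliary map is built from a basis of the degree-$m$ graded piece $(F,G)_m$ of the ideal itself, so that every coordinate of $\Phi({\bf g})$ is divisible by $h=\gcd(F({\bf g}),G({\bf g}))$, and the gcd enters multiplied by the large number $M=\dim (F,G)_m$ through the identity $\int_0^{2\pi}\log\|\Phi({\bf g}(re^{i\theta}))\|\,\tfrac{d\theta}{2\pi}=T_\Psi(r)+N_{\gcd}(F({\bf g}),G({\bf g}),r)+O(1)$, where $\Psi$ is the reduced representation. The second main theorem is then applied in Vojta--Ru's form (maximum over subsets in general position, so no Nochka weights are needed) with linear forms chosen pointwise by minimizing $|{\bf g}(z)^{\bf i}|$ over monomials outside the ideal; codimension $\ge 2$ enters only through $\dim\CC[x_0,\dots,x_n]_m/(F,G)_m=O(m^{n-2})$, which makes the error $O(m^{n-1})T_{\bf g}(r)$ negligible against $M\sim m^n/n!$. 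Since in Theorem \ref{RefinementII} the $g_i$ are zero-free, $N_{g_i}(0,r)=O(1)$ and the Wronskian term is simply discarded --- no truncation analysis is needed at all for this theorem (it is needed only for the asymptotic results with meromorphic $g_i$). Finally, the proximity bound in part (b) is obtained in the paper not by a filtration of $H^0(X,NL)$ but by the decomposition $\lambda_D(P)=-\log|p_i(\phi_i(P))|+O(1)$ for divisors in general position with the boundary (from \cite{levin2017}) combined with Theorem \ref{Theorem10.3}, a Borel-type consequence of Cartan's truncated theorem, and part (a) is proved directly, not via (b).
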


Here, we say that $\overline{Y}\subset X$ is in general position with the boundary $X\setminus (\CC^*)^n$ if $\overline{Y}$ does not contain any point of intersection of $n$ distinct irreducible components of $X\setminus (\CC^*)^n$.

Alternatively, the counting function $N_{\bf g}(Y,r)$ can be expressed as a counting function of common zeros of functions obtained by composing ${\bf g}$  with polynomials generating the defining ideal of $Y$.  In this context, we prove the following:

\begin{theorem}\label{gcdunit}
Let $F,\, G\in \mathbb C[x_1, \dots,x_n ]$ be nonconstant coprime  polynomials.
Let $g_1,\hdots,g_n$ be entire functions without zeros.  Assume that  $g_1^{i_1} \cdots g_n^{i_n} \notin\CC$ for any 
index set $(i_1,\dots,i_n)\in\mathbb Z^n\setminus \{(0,\dots,0)\}$. Let $\epsilon>0$.
\begin{enumerate}
\item  Then
$$
N_{\gcd}(F(g_1,\hdots,g_n), G(g_1,\hdots,g_n),r)\le_{\rm exc}  \epsilon\max_{1\le i\le n}\{ T_{g_i}(r)\}.
$$
\label{gcda}
\item If $F$ and $G$ do not both vanish at the origin $(0,\ldots, 0)$, then
$$
T_{\gcd}(F(g_1,\hdots,g_n), G(g_1,\hdots,g_n),r)\le_{\rm exc}  \epsilon\max_{1\le i\le n}\{ T_{g_i}(r)\}.
$$
\label{gcdb}
\end{enumerate}
\end{theorem}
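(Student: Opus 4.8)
The plan is to deduce both assertions from the geometric result Theorem~\ref{RefinementII}. Write $\mathbf g=(g_1,\dots,g_n)\colon\CC\to(\CC^*)^n$ and abbreviate $F(\mathbf g)=F(g_1,\dots,g_n)$ and $G(\mathbf g)=G(g_1,\dots,g_n)$. The hypothesis that no monomial $g_1^{i_1}\cdots g_n^{i_n}$ with $(i_1,\dots,i_n)\neq 0$ is constant is exactly the condition that $\mathbf g$ has Zariski-dense image. Let $Y$ be the closed subscheme of $(\CC^*)^n$ defined by the restrictions of $F$ and $G$. Since $F$ and $G$ are coprime in $\CC[x_1,\dots,x_n]$, and the only new units in the Laurent ring $\CC[x_1^{\pm1},\dots,x_n^{\pm1}]$ are monomials, $F$ and $G$ remain coprime there; hence either $Y=\emptyset$ --- in which case $F(\mathbf g)$ and $G(\mathbf g)$ have no common zero and both assertions are trivial --- or $\codim Y\ge 2$. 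Because $\mathbf g$ maps into the torus, the local intersection multiplicity of $\mathbf g$ with $Y$ at any $z\in\CC$ equals $\min\{\ord_zF(\mathbf g),\ord_zG(\mathbf g)\}$, whence
\[
N_{\gcd}(F(\mathbf g),G(\mathbf g),r)=N_{\mathbf g}(Y,r).
\]
Finally, taking the compactification $(\PP^1)^n$ with line bundle $\mathcal O(1,\dots,1)$, the First Main Theorem yields $T_{\mathbf g}(r)=\sum_{i=1}^nT_{g_i}(r)+O(1)\le n\max_{1\le i\le n}T_{g_i}(r)+O(1)$.

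Part (a) is now immediate: apply Theorem~\ref{RefinementII}(a) with $\epsilon$ replaced by $\epsilon/n$ and combine this with the displayed identity and the bound on $T_{\mathbf g}(r)$, absorbing the $O(1)$.

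For part (b), the First Main Theorem gives $T_{\gcd}(F(\mathbf g),G(\mathbf g),r)=N_{\gcd}(F(\mathbf g),G(\mathbf g),r)+m_{\gcd}(F(\mathbf g),G(\mathbf g),r)+O(1)$, and the counting term is handled by part (a) through the displayed identity; the point is the proximity term $m_{\gcd}$, which integrates over $|z|=r$ a quantity that is large precisely where $F(\mathbf g)$ and $G(\mathbf g)$ are simultaneously small. Fix a nonsingular projective toric compactification $X$ of $(\CC^*)^n$ for which the closure $\overline Y$ of $Y$ in $X$ is in general position with the boundary $X\setminus(\CC^*)^n$ (such $X$ exists by toric resolution, since $\codim Y\ge2$). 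The plan is to split $m_{\gcd}(F(\mathbf g),G(\mathbf g),r)$ into the part contributed by $\mathbf g(z)$ near $Y$, which is at most $T_{\overline Y,\mathbf g}(r)+O(1)\le_{\operatorname{exc}}\epsilon\max_iT_{g_i}(r)+O(1)$ by the First Main Theorem and Theorem~\ref{RefinementII}(b) (applied with $\epsilon/n$), plus the part contributed by $\mathbf g(z)$ approaching the locus where $\overline{V(F,G)}$ meets $X\setminus(\CC^*)^n$ but is not contained in $\overline Y$; this last part is estimated by a secondary argument. The assumption that $F$ and $G$ do not both vanish at the origin enters here decisively: after a suitable choice of $X$ it guarantees that no component of that second locus is a torus-fixed point, and more generally it is exactly what makes that locus controllable --- without it, $m_{\gcd}$ genuinely acquires a contribution of size up to $\min_iT_{g_i}(r)$ from the $z$ at which all of $g_1(z),\dots,g_n(z)$ are simultaneously small, and the stated conclusion fails.

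The main obstacle is bounding the second (boundary) contribution to $m_{\gcd}$. One should choose $X$ by a toric resolution adapted to the behavior of $V(F,G)$ at infinity --- governed by the Newton polytopes of $F$ and $G$ --- so that every component of $\overline{V(F,G)}\setminus\overline Y$ meeting $X\setminus(\CC^*)^n$ lies on a boundary stratum of codimension less than $n$; the non-vanishing of $F$ and $G$ at the origin is precisely what keeps the ``finite'' corner of $X$ out of this locus. The proximity of $\mathbf g$ to each such component is then again a greatest-common-divisor estimate, but in strictly smaller degree, so this part of the argument can be organized as an induction on $\deg F+\deg G$. The remaining ingredients --- coprimality giving codimension two, the monomial condition giving Zariski-density, the comparison of $T_{\mathbf g}$ with $\max_iT_{g_i}$, and the deduction of part (a) --- are routine.
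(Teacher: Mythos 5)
Your treatment of part (a) is essentially sound and is close to the paper's own route: the paper proves Theorem \ref{RefinementII}(a) and Theorem \ref{gcdunit}(a) simultaneously from the key Theorem \ref{Refinement} (after homogenizing $F$ and $G$), so passing through the subscheme $Y=V(F,G)\cap(\CC^*)^n$, using coprimality to get $\codim Y\ge 2$ (or $Y=\emptyset$), and comparing $T_{\bf g}(r)$ with $\max_i T_{g_i}(r)$ is a legitimate reduction.

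Part (b), however, contains a genuine gap: the bound on the proximity term $m_{\gcd}$ is never actually proved. Your decomposition of $m_{\gcd}(F({\bf g}),G({\bf g}),r)$ into a contribution ``near $\overline Y$'' and a boundary contribution is not justified --- simultaneous smallness of $F({\bf g}(z))$ and $G({\bf g}(z))$ can occur with ${\bf g}(z)$ far from $\overline Y$, so one needs a genuine comparison of Weil functions which you do not supply; the existence of a nonsingular toric $X$ with the two properties you require (general position of $\overline Y$ with the boundary, and every component of $\overline{V(F,G)}\setminus\overline Y$ meeting the boundary only in strata of codimension $<n$) is asserted, not proved; and the claim that the boundary contribution is ``again a gcd estimate in strictly smaller degree,'' to be handled by induction on $\deg F+\deg G$, is unsupported by any actual reduction or base case --- it is not even clear what the inductive statement would be, since proximity to a boundary stratum is not of the form $N_{\gcd}$ or $m_{\gcd}$ for a pair of coprime polynomials of lower degree. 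A concrete symptom: you declare both assertions ``trivial'' when $Y=\emptyset$, but part (b) is not --- for $F=x_1$, $G=x_2-1$ one has $Y=\emptyset$ and $N_{\gcd}=0$, yet $T_{\gcd}(g_1,g_2-1,r)=m_{\gcd}(g_1,g_2-1,r)+O(1)$, which is exactly the quantity whose smallness has to be established. The paper handles the proximity term by a short direct argument that bypasses all of this geometry: trivially $m_{\gcd}(F({\bf g}),G({\bf g}),r)\le m_{G({\bf g})}(0,r)$, and since $G(0,\dots,0)\ne0$, Theorem \ref{Theorem10.3} --- proved via Borel's lemma (Lemma \ref{Borel}) and Cartan's truncated second main theorem (Theorem \ref{tsmt}) applied to the map built from the monomials of $G$ --- gives $m_{G({\bf g})}(0,r)\le_{\rm exc}\epsilon\max_{1\le i\le n}T_{g_i}(r)$; combining this with part (a) and Lemma \ref{gcdfirstmain} finishes (b). (The same mechanism, together with the decomposition of boundary Weil functions into polynomials not vanishing at the origin taken from \cite{levin2017}, is what the paper uses to prove Theorem \ref{RefinementII}(b); your plan in effect presupposes that harder statement and more.)
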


Here, as will be discussed in more detail later, $N_{\gcd}(f,g,r)$ and $T_{\gcd}(f,g,r)$ are analogues of the greatest common divisor of two integers.  The function $N_{\gcd}(f,g,r)$ is simply the counting function of common zeros of $f$ and $g$.  

A particularly simple consequence of Theorem \ref{gcdunit} is the following result.

\begin{corollary}\label{fgunit}
Let $f$ and $g$ be multiplicatively independent  non-constant entire functions without zeros. 
Then for any $\epsilon >0$,
\begin{align}\label{corunit}
N_{\gcd}(f-1, g-1,r)\le_{\rm exc}  \epsilon\max \{ T_{f}(r), T_{g}(r)\}.
\end{align}
\end{corollary}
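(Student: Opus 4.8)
The plan is to obtain \eqref{corunit} directly as the case $n = 2$ of Theorem~\ref{gcdunit}(a). I would take $g_1 = f$ and $g_2 = g$, and choose the polynomials $F(x_1, x_2) = x_1 - 1$ and $G(x_1, x_2) = x_2 - 1$ in $\mathbb{C}[x_1, x_2]$. Then $F(g_1, g_2) = f - 1$ and $G(g_1, g_2) = g - 1$, so the left-hand side of \eqref{corunit} is exactly $N_{\gcd}(F(g_1, g_2), G(g_1, g_2), r)$, while the right-hand side is $\epsilon \max_{1 \le i \le 2}\{T_{g_i}(r)\} = \epsilon \max\{T_f(r), T_g(r)\}$.

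It then remains to check that the hypotheses of Theorem~\ref{gcdunit} hold in this setup. The polynomials $F$ and $G$ are nonconstant, and in the unique factorization domain $\mathbb{C}[x_1, x_2]$ the linear polynomials $x_1 - 1$ and $x_2 - 1$ are irreducible and non-associate, hence coprime. The functions $g_1 = f$ and $g_2 = g$ are entire and zero-free by hypothesis. Finally, the condition $g_1^{i_1} g_2^{i_2} = f^{i_1} g^{i_2} \notin \mathbb{C}$ for all $(i_1, i_2) \in \mathbb{Z}^2 \setminus \{(0,0)\}$ is precisely the assertion that $f$ and $g$ are multiplicatively independent as functions, i.e.\ modulo constants, which is the assumed hypothesis; writing $f = e^{h_1}$, $g = e^{h_2}$ with $h_1, h_2$ entire, it amounts to saying that no nontrivial $\mathbb{Z}$-linear combination of $h_1$ and $h_2$ is constant. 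Applying Theorem~\ref{gcdunit}(a) then gives \eqref{corunit}.

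Since this is advertised as a particularly simple consequence, I do not anticipate any genuine obstacle: the entire content is the translation of the two hypotheses on $f$ and $g$ into the language of Theorem~\ref{gcdunit}. The one point worth flagging is that ``multiplicatively independent'' must be read in the strong sense (no relation $f^a g^b \in \mathbb{C}^*$ with $(a,b) \neq (0,0)$), which is exactly what the Zariski-density/torsion-freeness condition in Theorem~\ref{gcdunit} demands. As a remark, since $F(0,0) = -1 \neq 0$, the polynomials $F$ and $G$ do not both vanish at the origin, so Theorem~\ref{gcdunit}(b) in fact yields the stronger bound $T_{\gcd}(f - 1, g - 1, r) \le_{\rm exc} \epsilon \max\{T_f(r), T_g(r)\}$.
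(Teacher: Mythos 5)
Your reduction to Theorem~\ref{gcdunit} is the right first step, but it does not cover the full strength of the hypothesis, and the point you ``flag'' is exactly where the gap lies. In this paper (and in the standard usage, cf.\ the proof of Corollary~\ref{fgagcd}), ``multiplicatively independent'' means that $f^ag^b=1$ forces $(a,b)=(0,0)$; it does \emph{not} mean that $f^ag^b\notin\CC$ for all $(a,b)\neq(0,0)$. The two conditions are genuinely different: take $f=e^z$ and $g=2e^z$. These are multiplicatively independent ($f^ag^b=2^be^{(a+b)z}=1$ forces $a=b=0$), yet $fg^{-1}=1/2\in\CC$, so the hypothesis of Theorem~\ref{gcdunit} fails and your argument gives nothing for such pairs. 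Reinterpreting the hypothesis in the ``strong sense'' does not fix this; it silently proves a weaker statement than Corollary~\ref{fgunit} as stated.

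The paper's proof (stated as the $k=1$ analogue of the proof of Corollary~\ref{fgagcd}) handles precisely this residual case by an elementary observation: suppose $f^ig^j=c\in\CC$ for some $(i,j)\neq(0,0)$. If $f-1$ and $g-1$ had a common zero $z_0$, then $f(z_0)=g(z_0)=1$, hence $c=f^i(z_0)g^j(z_0)=1$ and $f^ig^j=1$ identically, contradicting multiplicative independence. So in this case $f-1$ and $g-1$ have no common zeros at all, $N_{\gcd}(f-1,g-1,r)=0$, and \eqref{corunit} holds trivially; in the complementary case $f^ig^j\notin\CC$ for all $(i,j)\neq(0,0)$, one applies Theorem~\ref{gcdunit} exactly as you do. Adding this dichotomy completes your proof. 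Note also that your closing remark about the stronger $T_{\gcd}$ bound from Theorem~\ref{gcdunit}\ref{gcdb} is only justified under the strong hypothesis $f^ig^j\notin\CC$ for all $(i,j)\neq(0,0)$; in the degenerate case the above argument only kills the counting function, not the proximity part, so that stronger claim does not follow for all multiplicatively independent pairs by this reasoning.
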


An elementary proof of Corollary \ref{fgunit} was previously obtained in \cite {PWgcd} by adapting the number-theoretical arguments of \cite{CZ2} to Nevanlinna theory.

It is natural to try to extend Theorem \ref{gcdunit} (and Corollary \ref{fgunit}) in an appropriate way to entire functions, or more generally, to meromorphic functions.  In this direction, by adapting the ideas  and  methods of Silverman \cite{Silverman2005generalized} to a pair of meromorphic functions, the following estimate was established by Pasten and the second author in \cite[Proposition 7.2] {PWgcd} under the assumption of Vojta's conjecture for a blow-up of $\mathbb{P}^1\times \mathbb{P}^1$  at a single point: If $f$ and $g$ are algebraically independent complex meromorphic functions, then for all $\epsilon >0$,  
\begin{align}\label{gcdcomplex}
N_{\gcd}&(f-1,g-1,r )\le_{\rm exc}  \,\epsilon\max\{  T_{f }( r), T_{g}( r)\}\cr
&+ \frac1{1+\epsilon/4}  (N_{f}( 0,r) +N_{g}( 0,r)+N_{f}(\infty,r) +N_{g}(\infty,r) ).
\end{align}

Again under the assumption of Vojta's conjecture (as for (\ref{gcdcomplex})), the following asymptotic gcd estimate is formulated in \cite[Proposition 7.4] {PWgcd}: If $f$ and $g$ are multiplicatively independent meromorphic functions, then for any $\epsilon >0$,  there exists $n_0$ such that for all $n\ge n_0$,
\begin{align}\label{asymgcdcomplex}
 N_{\gcd}(f^n-1, g^n-1, r)\le_{\rm exc}   \epsilon\max\{ T_{f^n}( r),  T_{g^n}( r)\}.
 \end{align}

A second goal of this article is to prove (unconditional) asymptotic gcd estimates in a more general context:

\begin{theorem}\label{gcdPn}
Let $F,\, G\in \mathbb C[x_1, \dots,x_n ]$ be nonconstant coprime  polynomials such that not both of them vanish at $(0,\dots,0)$.
Let $g_1,\hdots,g_n$ be  meromorphic functions such that $g_1^{i_1} \cdots g_n^{i_n} \notin\CC$ for any 
index set $(i_1,\dots,i_n)\in\mathbb Z^n\setminus \{(0,\dots,0)\}$.   Then for any $\epsilon>0$, there exists $k_0$ such that for all $k\ge k_0$,
\begin{enumerate}
\item   
$$
N_{\gcd}(F(g_1^k,\hdots,g_n^k), G(g_1^k,\hdots,g_n^k),r)\le_{\rm exc} \epsilon\max_{1\le i\le n}\{ T_{g_i^k}(r)\};
$$
\label{kgcda}
\item
$$
T_{\gcd}(F(g_1^k,\hdots,g_n^k), G(g_1^k,\hdots,g_n^k),r)\le_{\rm exc}  \epsilon\max_{1\le i\le n}\{ T_{g_i^k}(r)\},
$$
 if $g_1,\hdots,g_n$ are entire functions.
\label{kgcdb}
\end{enumerate}
\end{theorem}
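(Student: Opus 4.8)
The plan is to reformulate the gcd counting function via a blow-up, bound the resulting divisorial height by running the mechanism behind Theorem~\ref{gcdunit} for the $k$-th power map $[k]\circ\mathbf g$, and control the extra terms caused by the zeros and poles of the $g_i$ by a local Newton-polytope analysis in which the hypothesis $F(0,\dots,0)\ne0$ or $G(0,\dots,0)\ne0$ is decisive. We may assume $n\ge2$, since for $n=1$ coprimeness forces $V(F,G)=\emptyset$ and a B\'ezout relation $AF+BG=1$ with $A,B\in\CC[x_1]$ shows that a common zero of $F(g_1^k)$ and $G(g_1^k)$ could occur only at a pole of $g_1$, where both functions have poles; hence $N_{\gcd}=T_{\gcd}=O(1)$. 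Since $T_{g_i^k}(r)=kT_{g_i}(r)+O(1)$, for fixed $\epsilon>0$ it suffices to bound (separately) $N_{\gcd}$ and $T_{\gcd}$ by $\tfrac\epsilon2\max_iT_{g_i^k}(r)$ coming from a ``main'' term plus $\tfrac\epsilon2\,k\max_iT_{g_i}(r)$ coming from a ``boundary'' term, for all $k$ at least some $k_0(\epsilon,F,G)$. Write $\mathbf g=(g_1,\dots,g_n)$ and let $[k]$ be the $k$-th power isogeny of $(\CC^*)^n$, which extends to a finite flat toric morphism of any projective toric compactification of $(\CC^*)^n$.

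\emph{Step 1: reformulation.} Following the construction of Silverman~\cite{Silverman2005generalized} used in \cite[Prop.~7.2]{PWgcd}, choose a nonsingular projective toric compactification $X\supset(\CC^*)^n$ with respect to which the closure $\overline Y\subset X$ of the subscheme $Y\subset\mathbb A^n$ defined by $(F,G)$ is in general position with the boundary away from the corner $(0,\dots,0)$ (which lies off $\overline Y$ by hypothesis). Blow up $X$ along $\overline Y$ to get $\pi\colon X'\to X$ with exceptional divisor $E$, and lift the meromorphic map $[k]\circ\mathbf g\colon\CC\to X$ to $\mathbf h_k\colon\CC\to X'$. One then gets
\[
N_{\gcd}\!\big(F(g_1^k,\dots,g_n^k),\,G(g_1^k,\dots,g_n^k),\,r\big)\le N_{\mathbf h_k}(E,r)+O(1),\qquad T_{\gcd}\!\big(\cdots\big)\le T_{E,\mathbf h_k}(r)+O(1),
\]
where the bound for $T_{\gcd}$ uses, for part (b), that the $g_i$ being entire and $F(0,\dots,0)\ne0$ (or $G(0,\dots,0)\ne0$) keep $\mathbf h_k$ away from the part of $X'$ over $X\setminus(\CC^*)^n$ where the proximity to $E$ is not controllable. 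This replaces a codimension-$\ge2$ incidence by a divisor-theoretic height.

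\emph{Step 2: the divisorial estimate.} On $X'$ one bounds $T_{E,\mathbf h_k}$ by the same mechanism that proves Theorem~\ref{gcdunit}, now applied to $[k]\circ\mathbf g$ in place of an entire map into $(\CC^*)^n$: the second main theorem for the relevant ample and boundary divisors on $X$, together with the multiplicative independence of the monomials $g_1^{i_1}\cdots g_n^{i_n}$ (inherited by the $g_i^k$), yields, for each fixed $\epsilon'>0$,
\[
T_{E,\mathbf h_k}(r)\le_{\rm exc}\epsilon'\,T_{[k]\circ\mathbf g}(r)+R_k(r),
\]
where $R_k(r)$ collects the terms supported on the zeros and poles of the $g_i$, which are present because $\mathbf g$ is only meromorphic and which do not occur in Theorem~\ref{gcdunit}. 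Since $T_{[k]\circ\mathbf g}(r)=kT_{\mathbf g}(r)+O(1)\le C\,k\max_iT_{g_i}(r)+O(1)$ with $C$ depending only on $X$, taking $\epsilon'=\epsilon/(3C)$ --- a choice independent of $k$, hence with an exceptional set independent of $k$ --- makes the first term at most $\tfrac\epsilon2\max_iT_{g_i^k}(r)$.

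\emph{Step 3: the remainder $R_k$ --- the crux.} What remains is a bound $R_k(r)\le C_{F,G}\sum_{i=1}^n\big(\overline N_{g_i}(0,r)+\overline N_{g_i}(\infty,r)\big)$ with $C_{F,G}$ independent of $k$; granting this, $R_k(r)\le 2nC_{F,G}\max_iT_{g_i}(r)+O(1)\le\tfrac\epsilon2\,k\max_iT_{g_i}(r)$ once $k\ge k_0(\epsilon,F,G)$, completing both parts. The bound comes from a local computation at a point $z_0$ with $\mathbf m:=(\ord_{z_0}g_1,\dots,\ord_{z_0}g_n)\ne\mathbf0$: writing $g_i=t^{m_i}u_i$ with $u_i$ a local unit, one has $\ord_{z_0}F(g_1^k,\dots,g_n^k)=k\mu_F+c_F$ with $\mu_F=\min_{\alpha\in\operatorname{supp}F}\langle\alpha,\mathbf m\rangle$ and $c_F\ge0$ the order of vanishing at $t=0$ of $\operatorname{in}_{\mathbf m}(F)$ along $t\mapsto(u_1(t)^k,\dots,u_n(t)^k)$, and similarly for $G$; the contribution of $z_0$ to $N_{\mathbf h_k}(E,r)$ is at most $\max\!\big(0,\min(k\mu_F+c_F,\,k\mu_G+c_G)\big)$. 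Here the hypothesis enters: because $\mathbf 0\in\operatorname{supp}F$ (as $F(0,\dots,0)\ne0$), $\mu_F\le0$, so if $\mu_F<0$ --- a pole of some $g_i$ active in $F$ --- this contribution vanishes for $k$ large unless the leading monomials of $F$ cancel, a case absorbed into the blow-up locus, while if $\mu_F=0$ then $\operatorname{in}_{\mathbf m}(F)$ has the nonzero constant term $F(0,\dots,0)$ and the contribution is $\le\min(c_F,c_G)$. (If instead both $F$ and $G$ vanished at the origin, then at a common zero of all the $g_i$ one could have $\mu_F,\mu_G>0$ and a contribution growing linearly in $k$, and the asymptotic bound would genuinely fail --- this is why the hypothesis is imposed.) Finally, since $[k]$ is \'etale over $(\CC^*)^n$, the germ $t\mapsto(u_1(t)^k,\dots,u_n(t)^k)$ equals $[k]\circ\mathbf u$ with $\mathbf u$ independent of $k$, and unwinding this identifies $c_F$, up to a factor depending only on $F$, with a ramification index at $z_0$ of a fixed monomial $g_1^{a_1}\cdots g_n^{a_n}$; summing such indices over the (finitely many relevant) zeros and poles of the $g_i$ is $O\big(\sum_iT_{g_i}(r)\big)$ uniformly in $k$. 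Making this last reduction precise --- controlling the orders of contact $c_F,c_G$ uniformly in $k$ --- is the principal difficulty; everything else is bookkeeping built on Theorem~\ref{gcdunit} and the toric second main theorem.
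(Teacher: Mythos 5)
There is a genuine gap: Steps 2 and 3 together assume, rather than prove, the quantitative heart of the theorem. In Step 2 you invoke ``the same mechanism that proves Theorem \ref{gcdunit}'' for $[k]\circ{\bf g}$, but that mechanism (Theorem \ref{Refinement}) is stated for maps whose coordinates are zero-free entire functions; once the $g_i$ are merely meromorphic and one clears denominators, the error terms it produces are not a harmless remainder ``supported on the zeros and poles of the $g_i$'': they appear with the coefficient $c_{m,n,d}$, which is of size comparable to $\frac{m}{n+1}M$, i.e.\ of the \emph{same order} as the main term, and they become negligible only because every zero of $h_i^k$ has multiplicity divisible by $k$, so that the truncated counting functions satisfy $N^{(L)}_{h_i^k}(0,r)\le \frac{L}{k}N_{h_i^k}(0,r)$ with $L$ fixed (depending on $m$ but not on $k$). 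This is exactly how the paper proceeds: write $h_i=g_ih_0$ with $h_0,\ldots,h_n$ entire without common zeros, homogenize $F,G$ to $F_1,G_1$, use the hypothesis that $F$ or $G$ has a nonzero constant term to ensure $x_0^d$ lies in the support, whence $N_{\gcd}(F(g_1^k,\ldots,g_n^k),G(g_1^k,\ldots,g_n^k),r)\le N_{\gcd}(F_1({\bf h}^k),G_1({\bf h}^k),r)-N_{\gcd}(\{{\bf h}^{k{\bf i}}\}_{{\bf i}\in I},r)$, and then apply Theorem \ref{Refinement} (with Lemma \ref{greenlemma} supplying the required linear independence for $k\ge\binom{m+n}{n}^2$), choosing first $m$ large and then $k$ large. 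In your scheme the same difficulty resurfaces in Step 3 as the claim that the orders of contact $c_F,c_G$ --- the vanishing orders of ${\rm in}_{\mathbf m}(F)(u_1^k,\ldots,u_n^k)$, a linear combination of $k$-th powers of fixed units --- can be summed over the zero/pole locus with a bound $C_{F,G}$ independent of $k$; this is a Borel/second-main-theorem statement about $k$-th powers (essentially equivalent to the truncated-SMT argument the paper uses), not bookkeeping, and you explicitly leave it unproved (``the principal difficulty''). With both the Step 2 estimate and the Step 3 remainder bound unestablished, the proposal does not yet constitute a proof of either part of the theorem.

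A secondary gap concerns part \ref{kgcdb}: you need an asymptotic bound on the proximity part, and Step 1 only gestures at it (``keep $\mathbf h_k$ away from \dots where the proximity to $E$ is not controllable''). The paper proves this separately as Theorem \ref{Theorem10.3}, by applying Cartan's truncated second main theorem (Theorem \ref{tsmt}) to the monomial map $\frak g(k)$ built from the terms of $G$, with Green's lemma giving nondegeneracy; there too the gain for large $k$ comes from comparing level-$\ell$ truncation against multiplicities divisible by $k$. Without an argument of this kind, the inequality $T_{\gcd}\le T_{E,{\bf h}_k}(r)+O(1)$ and the subsequent $\epsilon$-bound on $m_{\gcd}$ are not justified.
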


In particular,  we prove the conjectured inequality \eqref{asymgcdcomplex}:

\begin{corollary}\label{fgagcd}
Let $f$ and $g$ be multiplicatively independent meromorphic functions. 
Then for any $\epsilon>0$, there exists $k_0$ such that for all $k\ge k_0$,
\begin{align*}
N_{\gcd}(f^k-1, g^k-1,r)\le_{\rm exc}  \epsilon\max \{ T_{f^k}(r), T_{g^k}(r)\}.
\end{align*}
\end{corollary}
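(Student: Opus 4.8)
The plan is to read off Corollary \ref{fgagcd} as the special case $n=2$ of Theorem \ref{gcdPn}(\ref{kgcda}), taking $F(x_1,x_2)=x_1-1$, $G(x_1,x_2)=x_2-1$, $g_1=f$, and $g_2=g$. First I would verify the hypotheses of Theorem \ref{gcdPn}. The polynomials $x_1-1$ and $x_2-1$ are nonconstant, and they are coprime in $\CC[x_1,x_2]$: each is irreducible, and they are not associates since one involves only $x_1$ and the other only $x_2$ (concretely, specializing $x_1=1$ kills $x_1-1$ but not $x_2-1$, so $x_1-1\nmid x_2-1$). Moreover $F(0,0)=G(0,0)=-1\neq 0$, so in particular $F$ and $G$ do not both vanish at the origin. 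Finally, the requirement that $g_1^{i_1}g_2^{i_2}=f^{i_1}g^{i_2}\notin\CC$ for every $(i_1,i_2)\in\mathbb Z^2\setminus\{(0,0)\}$ is exactly the hypothesis that $f$ and $g$ are multiplicatively independent (which, in particular, forces both $f$ and $g$ to be non-constant).

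Granting these verifications, the conclusion is immediate. Note that $F(g_1^k,g_2^k)=f^k-1$, $G(g_1^k,g_2^k)=g^k-1$, and $\max_{1\le i\le 2}\{T_{g_i^k}(r)\}=\max\{T_{f^k}(r),T_{g^k}(r)\}$. Hence, given $\epsilon>0$, Theorem \ref{gcdPn}(\ref{kgcda}) furnishes a $k_0$ such that for all $k\ge k_0$,
$$
N_{\gcd}(f^k-1,g^k-1,r)=N_{\gcd}\bigl(F(g_1^k,g_2^k),\,G(g_1^k,g_2^k),\,r\bigr)\le_{\rm exc}\epsilon\max\{T_{f^k}(r),T_{g^k}(r)\},
$$
which is precisely the asserted estimate.

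Because Corollary \ref{fgagcd} is a direct specialization, there is no real obstacle in this deduction; the entire difficulty is contained in Theorem \ref{gcdPn} (and, for the entire case, in the codimension-two input of Theorem \ref{RefinementII}). If one wished to argue directly, the natural route would be to factor $f^k-1=\prod_{\zeta^k=1}(f-\zeta)$ and $g^k-1=\prod_{\xi^k=1}(g-\xi)$, so that a common zero of $f^k-1$ and $g^k-1$ forces the map $(f,g)$ to meet the $0$-dimensional (hence codimension-two) subscheme $\{x_1^k=1,\ x_2^k=1\}$ of $(\CC^*)^2$, and then to bound the corresponding counting function by working on a suitable nonsingular projective toric compactification and invoking Theorem \ref{RefinementII}; the threshold $k_0$ and the extra work needed to pass from entire to meromorphic $f$ and $g$ are exactly what Theorem \ref{gcdPn} packages, so here we simply cite it.
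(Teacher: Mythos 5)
Your reduction to Theorem \ref{gcdPn} with $F=x_1-1$, $G=x_2-1$ is the right first step (and it is how the paper begins), but your claim that the hypothesis $f^{i_1}g^{i_2}\notin\CC$ for all $(i_1,i_2)\ne(0,0)$ ``is exactly'' multiplicative independence is false, and this is where the gap lies. Multiplicative independence only excludes relations $f^ig^j=1$ with $(i,j)\ne(0,0)$; it does not exclude $f^ig^j$ being some other constant. For instance $f=e^z$ and $g=2e^{-z}$ are multiplicatively independent (a relation $f^ig^j=2^je^{(i-j)z}=1$ forces $i=j$ and $2^j=1$, hence $(i,j)=(0,0)$), yet $fg=2\in\CC$, so Theorem \ref{gcdPn} does not apply to this pair. (For the same reason your parenthetical remark that multiplicative independence forces $f$ and $g$ to be non-constant is also not quite right: $f=2$, $g=e^z$ are multiplicatively independent.) So your argument only proves the corollary in the case where no monomial $f^ig^j$ with $(i,j)\ne(0,0)$ is constant.

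The paper closes the remaining case with a short separate argument that your proposal omits: suppose $f^ig^j=c\in\CC$ for some $(i,j)\ne(0,0)$. If $f^k-1$ and $g^k-1$ had a common zero $z_0$ for some $k$, then $f^k(z_0)=g^k(z_0)=1$, hence $c^k=(f^ig^j)^k(z_0)=1$, and therefore $f^{ik}g^{jk}=c^k=1$ with $(ik,jk)\ne(0,0)$, contradicting multiplicative independence. Thus in this case $f^k-1$ and $g^k-1$ have no common zeros for any $k$, so $N_{\gcd}(f^k-1,g^k-1,r)=0$ and the asserted inequality holds trivially. Adding this case analysis makes your proof complete and essentially identical to the paper's; the final heuristic paragraph about factoring $f^k-1$ and invoking Theorem \ref{RefinementII} is not needed and, as stated, would anyway require $f,g$ to be zero-free entire functions.
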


When $f$ and $g$ are algebraically independent meromorphic functions, Corollary \ref{fgagcd} was recently obtained by Guo and the second author in \cite{GuoWang} with $\epsilon$ replaced by $\frac12+\epsilon$.  We refer to \cite {PWgcd} for further discussion of gcd problems for both complex and non-archimedean meromorphic functions. 

Even in the special case when $g_1,\ldots, g_n$ are complex polynomials, Theorem \ref{gcdPn} gives new results.

\begin{remark}
Let $F,\, G\in \mathbb C[x_1, \dots,x_n ]$ be nonconstant coprime  polynomials such that not both of them vanish at $(0,\dots,0)$.
Let $g_1,\ldots, g_n\in \CC[z]$ be complex polynomials such that $g_1^{i_1} \cdots g_n^{i_n} \notin\CC$ for any 
index set $(i_1,\dots,i_n)\in\mathbb Z^n\setminus \{(0,\dots,0)\}$.  It is elementary that in this case,
\begin{align}
\label{Ngcdpoly}
N_{\gcd}(F(g_1^k,\hdots,g_n^k), G(g_1^k,\hdots,g_n^k),r)=(\deg \gcd(F(g_1^k,\hdots,g_n^k), G(g_1^k,\hdots,g_n^k)))\log r+O(1)
\end{align}
and
\begin{align*}
\max_{1\le i\le n}\{ T_{g_i^k}(r)\}=k\max_{1\le i\le n}\{ \deg g_i\}\log r+O(1), 
\end{align*}
where the $\gcd$ on the right-hand side of \eqref{Ngcdpoly} is the greatest common divisor in the polynomial ring $\CC[z]$.  Then Theorem \ref{gcdPn} implies that for any $\epsilon>0$, there exists $k_0$ such that for all $k\ge k_0$,
\begin{align*}
\deg \gcd(F(g_1^k,\hdots,g_n^k), G(g_1^k,\hdots,g_n^k))<\epsilon k.
\end{align*}
More generally, Theorem \ref{gcdPn} gives a similar statement for rational functions $g_1,\ldots, g_n\in \CC(z)$.  When $n>2$, the only previous result in this direction appears to be a result of Ostafe \cite[Th.~1.3]{Ostafe}, which considers special polynomials such as $F=x_1\cdots x_r-1, G=x_{r+1}\cdots x_n-1$, but proves a stronger uniform bound independent of $k$. It is noted in \cite{Ostafe} that it appears to be difficult to extend the techniques used there to obtain results for general $F$ and $G$.  In the $n=2$ case, previous results include the original theorem of Ailon-Rudnick \cite{AR} in this setting and extensions of Ostafe \cite{Ostafe} (both with uniform bounds).

\end{remark}

We will use Theorem \ref{gcdunit} and Theorem \ref{gcdPn} to solve the following quotient problem, which can be considered an analogue of the``Hadamard quotient theorem" for recurrence sequences proved  by van der Poorten (see \cite{Rumely} and \cite{vdP}, and see \cite{CZ2002} and \cite{Survey} for an overview of the existing improvements).  
To state the result we make the following definitions: Let $G\in \mathbb C[x_1, \dots,x_n ]$ be a nonconstant polynomial  such that $G(0,\dots,0)\ne 0$. Since $G$ has a non-zero constant term, after arranging the index set in some order, we may write  
$$
G=a_{{\bf i}(0)}+\sum_{j=1}^{\ell}a_{{\bf i}(j)}{\bf x}^{{\bf i}(j)},
$$      
where $a_{{\bf i}(j)}\ne 0$ for $0\le j\le \ell$.  Then for entire functions $g_1,\hdots,g_n$, we let 
$$
\frak g_G:=(1,{\bf g}^{{\bf i}(1)},\hdots,{\bf g}^{{\bf i}(\ell)}):\CC\to \mathbb P^{\ell}.
$$ 
For  functions $h_i:\mathbb R_{\ge 0}\to \mathbb R_{\ge 0}$, $i=1,2$, we write $h_1(r)\asymp  h_2(r)$ if there exist positive numbers $a$ and $b$ such that $ah_1(r)\le h_2(r)\le bh_1(r)$ for all sufficiently large $r$.

\begin{corollary}\label{divisibility}
Let $F,\, G\in \mathbb C[x_1, \dots,x_n ]$ be nonconstant coprime  polynomials  such that $G(0,\dots,0)\ne 0$.
Let $g_1,\hdots,g_n$ be entire functions such that $T_{\frak g_G}(r)\asymp  \max_{1\le i\le n}\{ T_{g_i}(r)\}$.
\begin{enumerate}
\item If $F(g_1^k,\hdots,g_n^k)/G(g_1^k,\hdots,g_n^k)$ is an entire functions for infinitely many positive integers $k$; or\label{diva}
\item $g_1,\hdots,g_n$ are entire functions without zeros and $F(g_1,\hdots,g_n)/G(g_1,\hdots,g_n)$ is an entire function,\label{divb}
\end{enumerate}
then
there exists an index set $(i_1,\dots,i_n)\in\mathbb Z^n\setminus \{(0,\dots,0)\}$ such that $g_1^{i_1} \cdots g_n^{i_n} \in\CC$.
\end{corollary}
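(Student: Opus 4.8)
The plan is to argue by contradiction, so suppose that ${\bf g}^{{\bf i}}:=g_1^{i_1}\cdots g_n^{i_n}\notin\CC$ for every ${\bf i}\in\mathbb Z^n\setminus\{(0,\dots,0)\}$. Then $g_1,\dots,g_n$, and for every $k\ge 1$ also $g_1^k,\dots,g_n^k$, satisfy the multiplicative independence hypotheses of Theorems \ref{gcdunit} and \ref{gcdPn}. The first point I would record is that the hypothesis that $F(g_1^k,\dots,g_n^k)/G(g_1^k,\dots,g_n^k)$ be entire says exactly that the zero divisor of the (entire) function $G(g_1^k,\dots,g_n^k)$ is dominated by that of $F(g_1^k,\dots,g_n^k)$, so that
$$N_{\gcd}(F(g_1^k,\dots,g_n^k),G(g_1^k,\dots,g_n^k),r)=N_{G(g_1^k,\dots,g_n^k)}(0,r).$$
Since $G(0,\dots,0)\neq 0$, the running hypothesis of Theorem \ref{gcdPn} (that $F$ and $G$ not both vanish at the origin) is satisfied, so Theorem \ref{gcdPn}\ref{kgcda} --- or Theorem \ref{gcdunit}\ref{gcda} in case \ref{divb}, taking $k=1$ --- gives: for every $\epsilon>0$ there is a $k_0$ such that for all $k\ge k_0$ for which the quotient is entire,
$$N_{G(g_1^k,\dots,g_n^k)}(0,r)\ \le_{\rm exc}\ \epsilon\max_{1\le i\le n}T_{g_i^k}(r)=\epsilon k\max_{1\le i\le n}T_{g_i}(r).$$

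For the contradiction I would establish a matching lower bound on $N_{G(g_1^k,\dots,g_n^k)}(0,r)$. Writing $G=a_{{\bf i}(0)}+\sum_{j=1}^{\ell}a_{{\bf i}(j)}{\bf x}^{{\bf i}(j)}$ with ${\bf i}(0)=0$ and all $a_{{\bf i}(j)}\ne 0$, the function $G(g_1^k,\dots,g_n^k)=\sum_{j=0}^{\ell}a_{{\bf i}(j)}{\bf g}^{k{\bf i}(j)}$ is the pullback of the linear form $L=\sum_j a_{{\bf i}(j)}Y_j$ under the holomorphic curve $\gamma_k:=(1:{\bf g}^{k{\bf i}(1)}:\cdots:{\bf g}^{k{\bf i}(\ell)})\colon\CC\to\PP^{\ell}$ (so that $\gamma_1=\frak g_G$). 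The idea is to apply Cartan's Second Main Theorem to $\gamma_k$ relative to the $\ell+2$ hyperplanes given by the $\ell+1$ coordinate hyperplanes together with $H_L=\{L=0\}$; these are in general position exactly because every $a_{{\bf i}(j)}\ne 0$. Since $(1,{\bf g}^{k{\bf i}(1)},\dots,{\bf g}^{k{\bf i}(\ell)})$ is a reduced representation (its first coordinate is $1$), $\gamma_k$ omits $\{Y_0=0\}$, and for the remaining coordinate hyperplanes, using truncation at level $\ell$,
$$\sum_{j=1}^{\ell}N^{(\ell)}_{\gamma_k}(\{Y_j=0\},r)\le \ell\sum_{j=1}^{\ell}\overline{N}_{{\bf g}^{{\bf i}(j)}}(0,r)\le \ell^2\sum_{s}\overline{N}_{g_s}(0,r)\le C_G\max_{1\le i\le n}T_{g_i}(r)+O(1),$$
with $C_G$ depending only on $G$ and $n$, \emph{uniformly in $k$} --- it is the truncation that keeps the factor $k$ out of this estimate. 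Cartan's theorem then yields $N_{G(g_1^k,\dots,g_n^k)}(0,r)\ge_{\rm exc}T_{\gamma_k}(r)-C_G\max_i T_{g_i}(r)-o(T_{\gamma_k}(r))$. As the coordinates of $\gamma_k$ are the $k$-th powers of those of $\frak g_G$, one has $T_{\gamma_k}(r)=kT_{\frak g_G}(r)+O(1)$, and $T_{\frak g_G}(r)\asymp\max_i T_{g_i}(r)$ is precisely the hypothesis of the corollary, so $T_{\gamma_k}(r)\ge c\,k\max_i T_{g_i}(r)+O(1)$ for a constant $c>0$ independent of $k$. Fixing $\epsilon<c/4$ and then taking $k$ at least $k_0$, at least $2C_G/c$, and (see below) large enough for Cartan to apply, the lower bound forces $N_{G(g_1^k,\dots,g_n^k)}(0,r)\ge_{\rm exc}(c/3)k\max_i T_{g_i}(r)$ for large $r$, contradicting the upper bound above --- unless $\max_i T_{g_i}(r)$ is bounded, i.e.\ all $g_i$ are constant, which is itself a forbidden multiplicative relation. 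In case \ref{divb} the $g_i$ are zero-free, the coordinate-hyperplane corrections vanish, and the same argument runs directly with $k=1$ (no enlarging of $k$ is needed).

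The step I expect to be the main obstacle is the input required by Cartan's theorem, namely the \emph{linear non-degeneracy} of $\gamma_k$ --- equivalently, that $1,{\bf g}^{k{\bf i}(1)},\dots,{\bf g}^{k{\bf i}(\ell)}$ are $\CC$-linearly independent (which also gives $G(g_1^k,\dots,g_n^k)\not\equiv 0$, so that $\gamma_k$ lies in none of the $\ell+2$ hyperplanes) --- together with the uniformity in $k$ of the constants above. In case \ref{divb}, where the $g_i$ and hence the ${\bf g}^{{\bf i}(j)}$ are zero-free, Borel's theorem supplies non-degeneracy at once: a minimal $\CC$-linear relation among $1,{\bf g}^{{\bf i}(1)},\dots,{\bf g}^{{\bf i}(\ell)}$ would force some ratio ${\bf g}^{{\bf i}(j)-{\bf i}(j')}$ (with $j\ne j'$, where $j'$ may index the constant $1$) to be constant, against multiplicative independence. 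In case \ref{diva}, linear degeneracy can genuinely occur for small $k$, and here I would invoke the generalized $abc$/Borel theorem for powers --- that a relation $\sum_m c_m u_m^k=0$ among $k$-th powers of meromorphic functions with pairwise non-constant ratios and no vanishing proper subsum can hold only for $k$ bounded in terms of the number of terms (the function-field analogue of Fermat's theorem) --- to conclude that $1,{\bf g}^{k{\bf i}(1)},\dots,{\bf g}^{k{\bf i}(\ell)}$ are linearly independent once $k$ exceeds a bound depending only on $\ell$; since Theorem \ref{gcdPn}\ref{kgcda} already permits $k\ge k_0$, enlarging $k$ beyond this bound is harmless. With that in hand the uniformity in $k$ is clean: truncation controls the coordinate-hyperplane terms, $T_{\gamma_k}=kT_{\frak g_G}+O(1)$ is exact, and the implied constant in Cartan's error term is absolute.
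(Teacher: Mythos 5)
Your proposal is correct and follows essentially the same route as the paper: after reducing to $N_{\gcd}=N_{G(g_1^k,\dots,g_n^k)}(0,r)$ and invoking Theorem \ref{gcdPn}\ref{kgcda} (resp.\ Theorem \ref{gcdunit}\ref{gcda} for part \ref{divb}) for the upper bound, the paper obtains your Cartan lower bound for the curve $(1,{\bf g}^{k{\bf i}(1)},\dots,{\bf g}^{k{\bf i}(\ell)})$ by citing inequality \eqref{usetsmta} from the proof of Theorem \ref{Theorem10.3}, with nondegeneracy supplied by Lemma \ref{greenlemma} (your ``Borel theorem for $k$-th powers'') and Lemma \ref{Borel}, and then contradicts $T_{\frak g_G}(r)\asymp\max_i T_{g_i}(r)$ exactly as you do. The only cosmetic difference is that you control the coordinate-hyperplane terms by a $k$-independent truncated bound, while the paper uses the factor $\ell/k$ coming from truncation of $k$-th powers; both are valid.
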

\begin{remark}
The case where $(g_1^k-1)/(g_2^k-1)$ is entire for infinitely many positive integers $k$ (i.e., $F=-1+ x_1$ and $G=-1+x_2$) is treated in \cite{GuoWang} under the assumption that $T_{g_1}(r)\asymp T_{g_2}(r)$.   The case $G=1+a_1x_1+\hdots+a_nx_n$, $F=b_0+b_1x_{n+1}+\hdots+b_mx_{n+m}$ ($a_ib_j\ne 0$ for $1\le i\le n$ and $0\le j\le m$) with the assumption that $\max\{T_{g_1}(r),\hdots,T_{g_n}(r)\}\asymp\max\{T_{g_{n+1}}(r),\hdots,T_{g_{n+m}}(r)\}$
is studied in \cite{Guo} when $a_i$ and $b_j$ are constants and  in \cite{Guo2} when $a_i$ and $b_j$ are ``small functions", with a proof obtained by adapting the argument of \cite{CZ2002}, where Corvaja and Zannier proved a stronger version of the Hadamard quotient theorem through a sophisticated application of Schmidt's subspace theorem.   The result in \cite{Guo} can be obtained from Corollary \ref{divisibility} since in this situation 
$$
T_{G(g_1,\hdots,g_n)}(r)=T_{1+a_1g_1+\dots+a_ng_n}(r)=T_{(1,g_1,\hdots,g_n)}(r)+O(1)
$$ 
and 
$$ 
\max\{T_{g_1}(r),\hdots,T_{g_n}(r)\}\le T_{(1,g_1,\hdots,g_n)}(r)\le n\max\{T_{g_1}(r),\hdots,T_{g_n}(r)\}.
$$
\end{remark}
\begin{remark}
The growth condition on characteristic functions is essential, at least in part (b).  For instance, let $g(z)=\exp(2\pi iz)$ and $f(z)=\exp(2\pi ip(z))$, where $p(x)\in\mathbb Z[x]$ is a polynomial of degree at least $2$. Then $(g-1)|(f-1)$, but $f$ and $g$ are algebraically independent.
\end{remark}

Finally, we will consider the case when the coefficients of $F$ and $G$ are functions.
More precisely,  let ${\bf g}$ be a holomorphic map from $\CC$ to  $\PP^n$.   We say a meromorphic function $a$ is a {\it small function} with respect to ${\bf g}$ if $T_{a}(r)={\rm o} (T_{\bf g}(r))$.
Let $K_{\bf g}$ be the field containing all small functions with respect to ${\bf g}$. 
Let $F,G\in K_{\bf g}[x_1,\hdots,x_n]$ be nonconstant coprime polynomials.
We establish results analogous to Theorem~\ref{gcdunit}, Theorem~\ref{gcdPn}, and Corollary~\ref{divisibility} in this situation.  The results are stated in Section \ref{moving}.

The proofs of Theorem \ref{RefinementII} and Theorem \ref{gcdunit} are inspired by recent work of the first author \cite{levin2017} on analogous inequalities in Diophantine approximation involving greatest common divisors of multivariable polynomials evaluated at $S$-unit arguments.  The proofs of Theorem \ref{gcdPn} and Corollary \ref{fgagcd} go well beyond what is possible in the arithmetic setting, as they take advantage of inequalities involving truncated counting functions and Wronskian terms, whose analogues are still largely conjectural in Diophantine approximation.  

It will be useful to give a brief overview of the analogous results in Diophantine approximation involving greatest common divisors.  The first general result in this direction is due to Bugeaud, Corvaja, and Zannier, who in 2003 proved:

\begin{theorem}[Bugeaud, Corvaja, Zannier \cite{BCZ}]
\label{tBCZ}
Let $a,b\in \mathbb{Z}$ be multiplicatively independent integers.  Then for every $\epsilon>0$, 
\begin{align*}
%\label{eBCZ}
\log \gcd (a^n-1,b^n-1)\leq \epsilon n
\end{align*}
for all but finitely many positive integers $n$.
\end{theorem}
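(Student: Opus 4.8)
The plan is to prove this (following Bugeaud--Corvaja--Zannier) via the Schmidt Subspace Theorem, in the form valid over $\mathbb{Q}$ for a finite set of places $S$ and, at each $v\in S$, a system of $m$ linearly independent linear forms in $m$ variables: outside a finite union of hyperplanes of $\mathbb{P}^{m-1}(\mathbb{Q})$, one has $\sum_{v\in S}\sum_{i=1}^{m}\lambda_{i,v}(x)\le (m+\delta)h(x)$, where $\lambda_{i,v}(x)=\log\bigl(\|L_{i,v}\|_v\,\|x\|_v/|L_{i,v}(x)|_v\bigr)$. First I would reduce to $a,b\ge 2$ (multiplicative independence already forces $|a|,|b|\ge 2$; the cases with a negative $a$ or $b$, which along a fixed residue class of $n$ concern $\gcd(|a|^n\pm1,|b|^n\pm1)$, are handled by the same argument with $\pm1$ in place of $-1$), and then argue by contradiction: assume $d_n:=\gcd(a^n-1,b^n-1)>e^{\epsilon n}$ for all $n$ in an infinite set $\mathcal N$.

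Fix a large integer $L$ (to be chosen in terms of $\epsilon,a,b$), put $E=\{(i,j):0\le i,j\le L\}$, $m=|E|=(L+1)^2$, and for $n\in\mathcal N$ form the point $x_n=(a^{ni}b^{nj})_{(i,j)\in E}\in\mathbb{P}^{m-1}(\mathbb{Q})$; its coordinates are positive integers with gcd $1$ (the $(0,0)$-entry is $1$), so $x_n$ is primitive and $h(x_n)=nM$ with $M=L(\log a+\log b)=\max_{(i,j)\in E}(i\log a+j\log b)$. The crucial idea is to choose the set of places so that the Subspace Theorem can detect the common factor $d_n$: take $S_n=\{\infty\}\cup\{p:p\mid ab\}\cup\{p:p\mid d_n\}$. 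At $\infty$ and at each $p\mid ab$ use the coordinate forms $X_{(i,j)}$; at each $p\mid d_n$ use the forms $X_{(0,0)}$ and $X_{(i,j)}-X_{(0,0)}$ ($(i,j)\neq(0,0)$), which are linearly independent. Three short computations then give the total local contributions: $n(mM-\Sigma)$ at $\infty$ (where $\Sigma=\sum_{(i,j)\in E}(i\log a+j\log b)$); exactly $n\Sigma$ over the $p\mid ab$ (using $v_p(a^{ni}b^{nj})=n(i\,v_p(a)+j\,v_p(b))$ and $\sum_{p\mid ab}v_p(a)\log p=\log a$, likewise for $b$); and at least $(m-1)\log d_n$ over the $p\mid d_n$, because $a^n\equiv b^n\equiv1\pmod{p^{v_p(d_n)}}$ forces $p^{v_p(d_n)}\mid a^{ni}b^{nj}-1$. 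Summing, $\sum_{v\in S_n}\sum_i\lambda_{i,v}(x_n)\ge nmM+(m-1)\log d_n$.

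Applying the Subspace Theorem (say with $\delta=1$, so the bound is $(m+1)h(x_n)=(m+1)nM$), either $x_n$ lies in one of the exceptional hyperplanes, or $nmM+(m-1)\log d_n\le(m+1)nM$, i.e.
\[
\log d_n\ \le\ \frac{2M}{m-1}\,n\ =\ \frac{2(\log a+\log b)}{L+2}\,n .
\]
If $L$ is chosen with $2(\log a+\log b)<\epsilon(L+2)$, the second alternative contradicts $\log d_n>\epsilon n$; hence for every $n\in\mathcal N$ the point $x_n$ lies in one of the exceptional hyperplanes, which produces a nontrivial relation $\sum_{(i,j)\in E}c^{(n)}_{ij}(a^ib^j)^n=0$.

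The hard part is to finish from here, because the exceptional hyperplanes depend on $n$ through $S_n$, so one cannot immediately assert that $x_n$ falls into a fixed hyperplane; this is the technical heart of the argument. One way around it: invoke a quantitative/absolute form of the Subspace Theorem together with the fact that the forms above have coefficients in $\{0,\pm1\}$ (hence uniformly bounded height), which allows the exceptional hyperplanes to be drawn from a single finite set independent of $n$; by pigeonhole a fixed hyperplane then contains $x_n$ for infinitely many $n$, giving a single nonzero $(c_{ij})$ with $\sum_{(i,j)\in E'}c_{ij}\beta_{ij}^n=0$ for infinitely many $n$, where $\beta_{ij}=a^ib^j$ and $E'$ is its support. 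Since $a,b$ are multiplicatively independent the $\beta_{ij}$ are pairwise distinct positive reals, so a generalized Vandermonde argument on $|E'|$ values of $n$ forces every $c_{ij}=0$, a contradiction; alternatively one descends into the exceptional hyperplanes and re-applies the Subspace Theorem there, where the monomial structure persists with fewer monomials. Either route completes the proof.
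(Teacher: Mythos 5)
The paper itself does not prove Theorem \ref{tBCZ}: it is quoted from \cite{BCZ}, and the nearest relative of a proof in the paper is the mechanism of Theorem \ref{Refinement} (the Nevanlinna analogue of the arithmetic arguments of \cite{CZ2} and \cite{levin2017} behind Theorems \ref{tLCZ} and \ref{tL}). So your proposal must be judged on its own terms, and as written it has a genuine gap, which you yourself flag but do not close.

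Your local computations are fine: the archimedean contribution $n(mM-\Sigma)$, the contribution $n\Sigma$ from $p\mid ab$, the lower bound $(m-1)\log d_n$ from $p\mid d_n$, and the concluding generalized Vandermonde step are all correct. The problem is the application of the Subspace Theorem with the set of places $S_n=\{\infty\}\cup\{p:p\mid ab\}\cup\{p:p\mid d_n\}$, which varies with $n$. For each fixed $n$ the theorem only tells you that $x_n$ lies in one of finitely many proper subspaces \emph{depending on the data $(S_n,\text{forms},\delta)$}, hence on $n$; there is then no single finite family of hyperplanes to pigeonhole into. Your proposed repair does not exist in the literature in the form you need: the quantitative and absolute versions of the Subspace Theorem (Evertse, Evertse--Schlickewei, Evertse--Ferretti) bound the \emph{number} of exceptional subspaces uniformly, but the subspaces themselves still depend on the set of places, and bounded-height coefficients of the forms do not change this; refinements in which the exceptional hyperplanes depend only on the forms still allow finitely many additional exceptional \emph{points} depending on $S$ and $\epsilon$, and for each $n$ the single point $x_n$ could be such an exception for the data $S_n$. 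Your alternative patch (descend into the exceptional hyperplane and reapply the theorem) presupposes exactly the missing ingredient, namely one fixed hyperplane containing $x_n$ for infinitely many $n$. So the argument from ``either the inequality holds or $x_n$ is exceptional'' to a single relation $\sum c_{ij}(a^ib^j)^n=0$ valid for infinitely many $n$ is not justified.

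This is precisely the difficulty that the actual proofs are designed to avoid: in \cite{BCZ}, \cite{CZ2} and \cite{levin2017} the set of places is kept \emph{fixed}, $S=\{\infty\}\cup\{p:p\mid ab\}$, so that all the points $x_n$ are $S$-unit (or $S$-integer) points for one and the same $S$, and the quantity $d_n$ is encoded in the point rather than in the places: since $d_n$ divides every $a^{ni}b^{nj}-1$, one works with coordinates such as $(a^{ni}b^{nj}-1)/d_n$ (in \cite{CZ2,levin2017}, a basis of $(F,G)_m$ evaluated at the unit point and divided by the gcd), so that the height of the resulting primitive point drops by essentially $\log d_n$, and the Subspace Theorem over the fixed $S$, with locally chosen monomial bases as linear forms, shows this drop must be $\le\epsilon\, n$; in the original argument of \cite{BCZ} the required smallness at the places of $S$ is produced by truncating the geometric series for $1/(b^n-1)$. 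This is exactly the structure mirrored in the proof of Theorem \ref{Refinement} in this paper, where the identity $N_{\gcd}(F({\bf g}),G({\bf g}),r)+T_{\Phi({\bf g})}(r)\le m\,T_{\bf g}(r)+O(1)$ plays the role of the height drop and the primes dividing the gcd never enter the analogue of $S$. If you rework your argument along these lines (fixed $S$, gcd absorbed into the coordinates), your computations and your endgame can be reused; as it stands, the pigeonhole step is a gap.
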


Let $k$ be a number field, $M_k$ the set of places of $k$, $S\subset M_k$ a finite set of places containing the archimedean places, $\O_{k,S}$ the ring of $S$-integers of $k$, and $\O_{k,S}^*$ the group of $S$-units.  Let $\mathbb{G}_m^n$ be the $n$-dimensional algebraic torus and $\mathbb{G}_m^n(\O_{k,S})=(\O_{k,S}^*)^n$.  To state more general results, for $\alpha,\beta\in k$, we define the generalized logarithmic greatest common divisor and the $\gcd$ counting function (depending on a choice of $S$), respectively, by
\begin{align}
\log \gcd(\alpha,\beta)&=-\sum_{v\in M_k}\log^- \max\{|\alpha|_v,|\beta|_v\}=h([1:\alpha:\beta])-h([\alpha:\beta]),\label{loggcd}\\
N_{\gcd, S}(\alpha,\beta)&=-\sum_{v\in M_k\setminus S}\log^- \max\{|\alpha|_v,|\beta|_v\},\label{loggcd2}
\end{align}
where $|\cdot|_v$ is an appropriately normalized absolute value associated to $v$, $h$ is the standard (absolute logarithmic) Weil height on projective space, and $\log^-x=\min\{0,\log x\}$ (see \cite{levin2017} for details).  After work of Corvaja and Zannier \cite{CZ2} in the $2$-dimensional case, the first author generalized Theorem \ref{tBCZ} as follows.

\begin{theorem}[Corvaja, Zannier \cite{CZ2} ($n=2$), Levin \cite{levin2017} ($n>2$)]
\label{tLCZ}
Let $F,G\in k[x_1,\ldots, x_n]$ be coprime polynomials.  Let $\epsilon>0$.

\begin{enumerate}
\item  There exists a finite union $Z$ of translates of proper algebraic subgroups of $\mathbb{G}_m^n$ such that
\begin{align*}
N_{\gcd, S}(F(u_1,\ldots, u_n), G(u_1,\ldots, u_n))<\epsilon \max\{h(u_1),h(u_2),\ldots, h(u_n)\}
\end{align*}
for all $(u_1,\ldots, u_n)\in \mathbb{G}_m^n(\O_{k,S})\setminus Z$.
\item  Suppose additionally that not both of $F$ and $G$ vanish at the origin $(0,0,\ldots, 0)$.  Then there exists a finite union $Z$ of translates of proper algebraic subgroups of $\mathbb{G}_m^n$ such that
\begin{align*}
\log \gcd(F(u_1,\ldots, u_n), G(u_1,\ldots, u_n))<\epsilon \max\{h(u_1),h(u_2),\ldots, h(u_n)\}
\end{align*}
for all $(u_1,\ldots, u_n)\in \mathbb{G}_m^n(\O_{k,S})\setminus Z$.
\end{enumerate}
\end{theorem}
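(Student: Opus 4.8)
The plan is to derive the statement from Schmidt's Subspace Theorem, following Corvaja--Zannier \cite{CZ2} in the case $n=2$ and the first author \cite{levin2017} for general $n$. The essential structural consequence of coprimeness is that the common zero scheme $W:=V(F,G)$ has codimension at least $2$ in $\mathbb{A}^n$; this is the arithmetic counterpart of the hypothesis $\codim Y\ge 2$ in Theorem~\ref{tNWY}, and it is indispensable, since if $F$ and $G$ had a common factor $P$ then $N_{\gcd,S}(F(\mathbf u),G(\mathbf u))\ge N_{\gcd,S}(P(\mathbf u),P(\mathbf u))\asymp \max_i h(u_i)$ on a Zariski-dense set of $S$-unit points. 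Throughout, write $N_{\gcd,S}(F(\mathbf u),G(\mathbf u))=\sum_{v\notin S}\lambda_v(\mathbf u)$ with $\lambda_v(\mathbf u)=-\log^-\max\{\|F(\mathbf u)\|_v,\|G(\mathbf u)\|_v\}\ge 0$; a place contributes to this sum precisely when the reduction $\mathbf u\bmod v$ lands on $W$ modulo $v$. Fixing $\epsilon>0$, the goal is to attach to each $\mathbf u\in\mathbb{G}_m^n(\mathcal{O}_{k,S})$ violating the desired inequality a translate of a proper algebraic subgroup containing it.

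The engine is an auxiliary construction feeding the Subspace Theorem. For a large integer $N$, let $V_N$ be the space of polynomials of degree at most $N$, with the associated monomial (Veronese-type) embedding $\psi_N\colon\mathbb{G}_m^n\hookrightarrow\mathbb{P}^M$, so that $h(\psi_N(\mathbf u))=N\,h([1:u_1:\cdots:u_n])+O_N(1)\le nN\max_i h(u_i)+O_N(1)$. Filter $V_N$ by the powers of the ideal $I=(F,G)$ and choose a basis $\varphi_0,\dots,\varphi_M$ of $V_N$ adapted to the filtration, with $\varphi_i\in I^{t_i}$. Expanding $\varphi_i=\sum_{a+b=t_i}h_{ab}F^aG^b$ and applying the ultrametric inequality together with integrality yields the crucial estimate $\mu_v(\varphi_i(\mathbf u))\ge t_i\,\lambda_v(\mathbf u)$ for every $v\notin S$ (the finitely many places of bad reduction of $F,G$ produce only an $O_N(1)$ error and need not be absorbed into $S$). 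A Hilbert-function count shows that the total weight $T_N:=\sum_i t_i=\sum_{t\ge 1}\dim(V_N\cap I^t)$ grows like a positive multiple of $N^{n+1}$, so in particular $T_N\gg N$. Combining this with the product formula, the Subspace Theorem --- applied to $\psi_N(\mathbf u)$ with the forms $\varphi_i$ at the places supporting the gcd and monomial coordinate forms at the remaining places --- will give, provided the relevant set of places can be taken independent of $\mathbf u$ (the difficulty addressed below), that once $N$ is large enough in terms of $\epsilon,n,\deg F,\deg G$ the point $\psi_N(\mathbf u)$ lies on one of finitely many fixed hyperplanes of $\mathbb{P}^M$, hence that $\mathbf u$ lies on one of finitely many fixed hypersurfaces of $\mathbb{G}_m^n$.

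The main obstacle --- and the reason $n>2$ is genuinely harder than the original $n=2$ case --- is that the set $S_0(\mathbf u)=\{v\notin S:\lambda_v(\mathbf u)>0\}$ supporting the gcd varies with $\mathbf u$, whereas the Subspace Theorem requires a fixed finite set of places; this is precisely where $\codim W\ge 2$ must be exploited. For $n=2$ it makes $W$ a finite set of points $P_1,\dots,P_r$, so every contributing $v$ has $\mathbf u\bmod v$ equal to some $\overline{P_j}$; a pigeonhole argument isolates one $P_j$ carrying a proportion $\ge 1/r$ of the gcd, and then, translating by $P_j$, the problem reduces (up to a bounded multiplicative constant) to bounding $N_{\gcd,S}(w_1-1,w_2-1)$ for the pair of $S$-units $w_i=u_i/(P_j)_i$ --- which is exactly the theorem of Bugeaud--Corvaja--Zannier \cite{BCZ} and Corvaja--Zannier \cite{CZ2} in its basic form, proved by a direct application of the Subspace Theorem, and which already produces the exceptional set (the translates of subtori where $w_1,w_2$ become multiplicatively dependent). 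For $n>2$ the scheme $W$ may be positive-dimensional, the reduction to a single point breaks down, and \cite{levin2017} instead runs a more intricate induction on $n$: after pigeonholing onto an irreducible component of $W$, the Subspace Theorem step either delivers the bound or confines $\mathbf u$ to a proper subvariety on which a lower-dimensional instance of the statement applies. In all cases, the passage from ``finitely many hypersurfaces'' to ``finitely many translates of proper algebraic subgroups'' is effected by Laurent's structure theorem: the Zariski closure of the set of $S$-unit points on any fixed proper subvariety of $\mathbb{G}_m^n$ is a finite union of translates of proper subgroups, so all but finitely many of the exceptional $\mathbf u$ lie in a fixed such union, and the finitely many remaining points are themselves $0$-dimensional translates of subgroups, which go into $Z$. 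This settles part (a); for part (b) one must in addition control the contribution to $\log\gcd$ of the places $v\in S$, which is handled by passing to a smooth projective toric compactification of $\mathbb{G}_m^n$ in which the hypothesis that $F$ and $G$ do not both vanish at the origin guarantees that the closure of $W$ avoids the torus-fixed point corresponding to the origin, keeping that contribution under control.
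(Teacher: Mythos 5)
You should first note that the paper gives no proof of Theorem \ref{tLCZ}: it is quoted as background from \cite{CZ2} and \cite{levin2017}, so the only internal point of comparison is the paper's Nevanlinna adaptation of Levin's argument (Lemma \ref{mainlemma}, Theorem \ref{Refinement}). Measured against that, your sketch has a genuine gap at its central step, the handling of the ``moving places'' problem for $n>2$. The claimed resolution --- pigeonhole onto an irreducible component of $W=V(F,G)$ and run an induction on $n$ in which the Subspace Theorem ``either delivers the bound or confines $\mathbf{u}$ to a proper subvariety on which a lower-dimensional instance applies'' --- does not work and is not what \cite{levin2017} does. When $W$ has positive-dimensional components the local gcd at $v$ is not controlled by the counting function of a point (the trick $(x-a,y-b)^k\subset(F,G)$, which legitimately reduces the $n=2$ case to the $\gcd(u-1,v-1)$ theorem, uses that the common zero is isolated); and the locus to which the exceptional $\mathbf{u}$ are confined is an arbitrary proper subvariety of $\mathbb{G}_m^n$, not a subtorus, so no ``lower-dimensional instance'' of the statement is available --- and even after applying Laurent's theorem, $F$ and $G$ restricted to a translate of a subtorus need not remain coprime (they may even vanish identically), so the induction is not set up. The actual mechanism, visible in Theorem \ref{Refinement}, is different: one fixes $m$, works with the single space $(F,G)_m$, and at each place chooses a basis from a \emph{fixed finite list} built from $F\mathbf{x}^{\mathbf i}$, $G\mathbf{x}^{\mathbf j}$ with the trailing-monomial correction of Lemma \ref{mainlemma} relative to a weight order given by the local valuations of the coordinates; the Subspace Theorem is then applied once, in Vojta's general form with the maximum over subsets of forms in general position, with no induction on $n$ and no pigeonholing on components of $W$.

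The quantitative heart is also missing from your outline. The estimate $\mu_v(\varphi_i(\mathbf u))\ge t_i\lambda_v(\mathbf u)$ together with $T_N\gg N$ does not by itself yield the theorem: the Subspace Theorem's right-hand side is $(M+1+\epsilon)h(\psi_N(\mathbf u))\approx (M+1)\,N\,n\max_i h(u_i)$, so one must exhibit on the left a main term (coming from the monomial forms at the places outside the gcd support, using that monomials in $S$-units are $S$-units) that cancels this up to $\epsilon\, m \max_i h(u_i)$; this cancellation is exactly what the exact counts $\sum_{s\in B_j}\ord_{x_i}(s/F_j)=\binom{m+n-d}{n+1}$ of Lemma \ref{mainlemma} and the bound $M'=O(m^{n-2})$ on the complementary dimension are for, and your filtration-by-powers-of-$I$ count asserts rather than establishes it. Finally, part (b) is not a matter of the closure of $W$ avoiding a torus-fixed point: $\log\gcd - N_{\gcd,S}$ is the contribution $-\sum_{v\in S}\log^-\max\{|F(\mathbf u)|_v,|G(\mathbf u)|_v\}$, and bounding it uses the hypothesis $G(0,\dots,0)\neq 0$ through a Subspace Theorem (unit-equation) argument applied to the monomial expansion of $G$ evaluated at the $S$-unit point --- the arithmetic analogue of Theorem \ref{Theorem10.3} --- which contributes further translates of subtori to $Z$; your geometric gloss neither implies nor replaces this estimate.
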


One can also give a geometric version of Theorem \ref{tLCZ}.

\begin{theorem}
\label{tL}
Let $\mathbb{G}_m^n\subset X$ be a nonsingular projective toric variety of dimension $n$, and let $Y$ be a closed subscheme of $X$ of codimension at least $2$, both defined over a number field $k$. Let $A$ be a big divisor on $X$.  Let $S$ be a finite set of places of $k$ containing the archimedean places.  Let $\epsilon>0$.
\begin{enumerate}
\item There exists a finite union $Z$ of translates of proper algebraic subgroups of $\mathbb{G}_m^n$ such that
\begin{align*}
N_{Y,S}(P)\leq \epsilon h_A(P)+O(1)
\end{align*}
for all $P\in \mathbb{G}_m^n(\O_{k,S})\setminus Z\subset X(k)$.
\item Suppose that $Y$ is in general position with the boundary of $\mathbb{G}_m^n$ in $X$.  Then there exists a finite union $Z$ of translates of proper algebraic subgroups of $\mathbb{G}_m^n$ such that
\begin{align*}
h_Y(P)\leq \epsilon h_A(P)+O(1)
\end{align*}
for all $P\in \mathbb{G}_m^n(\O_{k,S})\setminus Z\subset X(k)$.
\end{enumerate}
\end{theorem}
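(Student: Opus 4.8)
The plan is to deduce Theorem~\ref{tL} from its polynomial counterpart, Theorem~\ref{tLCZ}, through the standard dictionary between local height (Weil) functions and valuations of polynomial evaluations, supplemented by the combinatorial description of heights on toric varieties. The first step is to replace the big divisor $A$ by $\max_i h(u_i)$. Since $\Pic(\mathbb{G}_m^n)=0$, every divisor $B$ on $X$ is linearly equivalent to a $\mathbb{Z}$-combination $\sum_i b_i D_i$ of the prime boundary divisors, and at a point $P=(u_1,\dots,u_n)\in\mathbb{G}_m^n(\mathcal{O}_{k,S})$ the Weil functions $\lambda_{D_i,v}(P)$ are piecewise linear in $(\log|u_j|_v)_j$ with coefficients determined by the fan; summing over $v$ and using $\sum_v\log|u_j|_v=0$ gives $h_B(P)\le C\max_i h(u_i)+O(1)$ for every $B$, while bigness of $A$ (full-dimensionality of its moment polytope) yields the matching lower bound $h_A(P)\gg\max_i h(u_i)$. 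So it suffices to bound $N_{Y,S}(P)$, respectively $h_Y(P)$, by $\epsilon\max_i h(u_i)+O(1)$ off a finite union $Z$ of translates of proper algebraic subgroups.

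For part (a): since $\codim_X Y\ge 2$, the trace $Y\cap\mathbb{G}_m^n$ has codimension at least $2$ in the torus (or is empty), so there are finitely many coprime pairs $(F_j,G_j)$ in $k[x_1,\dots,x_n]$ with $Y\cap\mathbb{G}_m^n\subseteq\bigcup_j\{F_j=G_j=0\}$, each set of codimension $2$. For $v\notin S$ a point $P\in\mathbb{G}_m^n(\mathcal{O}_{k,S})$ reduces into the torus modulo $v$, so the part of $Y$ lying in the boundary does not contribute, and $\lambda_{Y,v}(P)\le\max_j\min\{\lambda_{\{F_j=0\},v}(P),\lambda_{\{G_j=0\},v}(P)\}+O(1)$; summing over $v\notin S$ and using that the $u_i$ are $S$-units (which kills the degree terms) gives $N_{Y,S}(P)\le\sum_j N_{\gcd,S}(F_j(P),G_j(P))+O(1)$. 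Applying Theorem~\ref{tLCZ}(a) to each pair with $\epsilon$ rescaled by the number of pairs, and taking $Z$ the union of the resulting exceptional loci, proves (a).

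For part (b) one must also bound the contribution $\sum_{v\in S}\lambda_{Y,v}(P)$ of the places in $S$, where $P$ can approach the part of $Y$ on the boundary; here Theorem~\ref{tLCZ}(b), which bounds the full logarithmic gcd $\log\gcd$ over all places under the hypothesis that $F,G$ do not both vanish at the origin, is the relevant tool, and that hypothesis is the arithmetic incarnation of general position with the boundary. Using that $X$ is nonsingular, each maximal cone of the fan is generated by a lattice basis, so the affine toric chart $U_\sigma\cong\mathbb{A}^n$ has coordinates restricting to a monomial automorphism $\phi_\sigma$ of $\mathbb{G}_m^n$; the $U_\sigma$ cover $X$, and the origin of $U_\sigma$ is exactly one of the $n$-fold intersection points of boundary components, so the hypothesis that $Y$ is in general position with the boundary says $Y$ misses each such origin. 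Consequently the ideal of $Y\cap U_\sigma$ is not contained in $(x_1,\dots,x_n)$, and one can cover $Y\cap U_\sigma$ by finitely many codimension-$2$ complete intersections $\{F=G=0\}$ with $F,G$ coprime and $F(0,\dots,0)\ne 0$. Collecting these over all charts, one obtains $\lambda_{Y,v}(P)\le\max_j\min\{\lambda_{\{F_j=0\},v}(\phi_j(P)),\lambda_{\{G_j=0\},v}(\phi_j(P))\}+O(1)$ at every place, hence $h_Y(P)\le\sum_j\log\gcd(F_j(\phi_j(u)),G_j(\phi_j(u)))+O(1)$; applying Theorem~\ref{tLCZ}(b) in the coordinates $\phi_j(u)$ (with $\max_i h$ in those coordinates bounded by a constant times $\max_i h(u_i)$, and with $Z$ the union of the pullbacks of the finitely many exceptional loci under the automorphisms $\phi_j$) completes the proof.

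The step I expect to be the main obstacle is the uniform local-height comparison $\lambda_{Y,v}(P)\le\max_j\min\{\dots\}+O(1)$ underlying part (b): one must carefully pass between Weil functions of $Y$ on $X$, of $Y\cap U_\sigma$ on the affine chart, and of the complete intersections after the monomial coordinate changes, keep the implied $O(1)$ uniform over all $v$ and $P$, and match the logarithmic-gcd normalization near the boundary -- in other words, verify that ``general position with the boundary'' genuinely cashes out as ``$F,G$ not both vanishing at the origin'' with all error terms controlled. The reduction of $A$ and the citations of Theorem~\ref{tLCZ} are then routine applications of the functoriality and additivity of heights.
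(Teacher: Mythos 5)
This theorem is not actually proved in the paper: it is stated as background and attributed to \cite{levin2017}, and the only place the present paper touches its proof is in the argument for Theorem \ref{RefinementII}\ref{refb}, where it simply cites \cite[Theorem 4.4]{levin2017} for exactly the chart decomposition you are trying to reconstruct. Your outline follows the same route as that reference (and as the paper's Nevanlinna analogue): part (a) by covering $Y\cap\mathbb{G}_m^n$ with a coprime pair from its ideal and invoking Theorem \ref{tLCZ}(a) -- note a single coprime pair $F,G$ suffices, as in the paper's proof of Theorem \ref{RefinementII}\ref{refa}, so the ``finitely many pairs'' are unnecessary -- and part (b) by using that on a nonsingular complete toric variety the maximal cones give charts $U_\sigma\cong\mathbb{A}^n$ whose coordinates restrict to monomial automorphisms of the torus, that general position with the boundary means exactly that $Y$ avoids the torus-fixed points (the chart origins), hence admits coprime local equations not both vanishing at the origin, and then invoking Theorem \ref{tLCZ}(b). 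Two caveats. First, the step you yourself flag as the main obstacle, the uniform comparison $\lambda_{Y,v}(P)\le\max_j\min\{\lambda_{\{F_j=0\},v}(\phi_j(P)),\lambda_{\{G_j=0\},v}(\phi_j(P))\}+O(1)$ with error terms summable over $v$, is precisely the content of \cite[Theorem 4.4]{levin2017}; you assert it and describe what must be checked but do not prove it, so as written part (b) is a correct outline resting on that unproved (though true and citable) lemma, and the prime-avoidance argument producing a coprime pair with one member nonvanishing at the chart origin should also be spelled out. Second, in the reduction of $h_A$ to $\max_i h(u_i)$ for a big divisor $A$, the lower bound $h_A(P)\gg\max_i h(u_i)+O(1)$ holds on all of $\mathbb{G}_m^n(k)$ because one can write $mA\sim H+E$ with $H$ ample and $E$ an effective \emph{torus-invariant} divisor (every effective class on a toric variety contains a torus-invariant representative), so no additional exceptional locus beyond the translates of subgroups from Theorem \ref{tLCZ} is needed; your appeal to ``full-dimensionality of the moment polytope'' should be made precise in this way.
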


All of the described arithmetic inequalities rely on Schmidt's Subspace Theorem in Diophantine approximation.  The proofs of Theorem \ref{RefinementII} and Theorem \ref{gcdunit} are adapted from the proofs in \cite{levin2017}, and use the analogue of Schmidt's Subspace Theorem in Nevanlinna theory: Cartan's second main theorem (in a form due to Vojta).  The proofs of the asymptotic gcd inequalities of Theorem~\ref{gcdPn} and Corollary~\ref{fgagcd} combine techniques from \cite{levin2017} along with new ideas in order to take advantage of the stronger error terms known in the second main theorem in Nevanlinna theory.

After giving the relevant background material in the next section, in Section \ref{sKey} we prove the key technical results underlying the paper.  In Section \ref{sMain}, we apply these results to prove our main theorems. In Section \ref{moving}, we review some fundamental results in Nevanlinna theory with moving targets, and we state and prove the main theorems when the coefficients of $F$ and $G$ are small functions.

\section{Background material}\label{Preliminary}

\subsection{Nevanlinna theory over $\CC$}\label{Nevanllina}

We will set up some notation and definitions in
 Nevanlinna theory for complex meromorphic functions and recall some basic results.
We refer to   \cite[Chapter VI ]{La} or \cite[Chapter 1]{ru2001nevanlinna} for details.

Let   $f$ be a meromorphic function  and   $z\in \mathbb C$. Define $v_z(f):={\rm ord}_z(f)$,
$$v_z^+ (f):=\max\{0,v_z(f)\}, \quad\text{and }\quad
  v_z^- (f):=-\min\{0,v_z(f)\}.
$$
Let $n_f(\infty,r)$ (respectively,  ${n}^{(Q)}_{f}(\infty,r)$) denote the number of poles of $f$ in $\{z:|z|\le r\}$, counting multiplicity (respectively, ignoring multiplicity larger than $Q\in\mathbb N$). The  {\it counting function} and {\it truncated counting function} of $f$ of order $Q$ at $\infty$  are  defined respectively by
\begin{align*}
N_f(\infty,r)&:=\int_0^r\frac{n_f(\infty,t)-n_f(\infty,0)}t dt+n_f(\infty,0)\log r\\
&=\sum_{0<|z|\le r } v_z^- (f)\log |\frac{r}{z}|+v_0^- (f)\log r,
\end{align*}
and
\begin{align*}
N^{(Q)}_f(\infty,r)&:=\int_0^r\frac{n^{(Q)}_f(\infty,t)-n^{(Q)}_f(\infty,0)}t dt+n^{(Q)}_f(\infty,0)\log r\\
&=\sum_{0<|z|\le r } \min\{Q,v_z^- (f)\}\log |\frac{r}{z}|+\min\{Q,v_0^- (f)\}\log r.
\end{align*} 
Then define the {\it counting function} $N_f(r,a)$ and the {\it truncated counting function} $N^{(Q)}_f(r,a)$ for $a\in\CC$ as
$$
N_f(a,r):=N_{1/(f-a)}(r, \infty)\quad\text{and}\quad N^{(Q)}_f(a,r):=N^{(Q)}_{1/(f-a)}(\infty,r).
$$
The  {\it proximity function} $m_f(\infty,r)$ is defined by
$$
m_f(\infty,r):=\int_0^{2\pi}\log^+|f(re^{i\theta})|\frac{d\theta}{2\pi},
$$
where $\log^+x=\max\{0,\log x\}$ for  $x\ge 0$. For any $a\in \CC,$ the {\it proximity function} $m_f(a,r)$ is defined by
$$m_f(a,r):=
m_{1/(f-a)}(\infty,r).
$$
Finally,  the {\it characteristic function} is defined by
$$
T_f(r):=m_f(\infty,r)+N_f(\infty,r).
$$
We recall the following version of Jensen's formula.
\begin{lemma}\label{Jensen}
Let $f$ be a meromorphic function on $\{z: |z|\le r\}$ which is not the zero function.  Then
\begin{align*}
\int_0^{2\pi}\log|f(re^{i\theta})|\frac{d\theta}{2\pi}
&=N_f(r,0)- N_f(r,\infty)+\log |c_f|,
\end{align*}
where $c_f$ is the leading coefficient of $f$ expanded as Laurent series in $z$, i.e.,
$f=c_fz^m+\cdots $ with $c_f\ne 0$.
\end{lemma}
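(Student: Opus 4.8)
The plan is to establish the identity by reducing it, via multiplicativity of both sides under products, to three elementary building blocks. Since $f$ is not identically zero and meromorphic on $\{|z|\le r\}$, it has only finitely many zeros and poles in the closed disc; let $a_1,\dots,a_p$ and $b_1,\dots,b_q$ be its zeros and poles, respectively, lying in $0<|z|\le r$, each listed with multiplicity, and set $m=\ord_0(f)$. Then
$$f(z)=z^{m}\,u(z)\,\frac{\prod_{j=1}^{p}(z-a_j)}{\prod_{k=1}^{q}(z-b_k)},$$
where $u$ is holomorphic and nowhere vanishing on a neighborhood of $\{|z|\le r\}$. Both sides of the asserted identity are additive when $f$ is replaced by a product $f_1f_2$: the left-hand side because $\log|f_1f_2|=\log|f_1|+\log|f_2|$, and the right-hand side because $c_{f_1f_2}=c_{f_1}c_{f_2}$ and, using $v_z^+-v_z^-=\ord_z$,
$$N_{f}(r,0)-N_{f}(r,\infty)=\sum_{0<|z|\le r}\ord_z(f)\log\Bigl|\frac{r}{z}\Bigr|+\ord_0(f)\log r,$$
which is additive since $\ord_z$ is. Thus it suffices to check the identity for $f(z)=z$, for $f(z)=z-a$ with $a\in\CC^{*}$, and for $f$ holomorphic and nowhere vanishing near $\{|z|\le r\}$.

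Each building block is then handled directly. If $f$ is holomorphic and nowhere vanishing near $\{|z|\le r\}$, then $\log|f|$ is harmonic there, so the mean value property gives $\int_0^{2\pi}\log|f(re^{i\theta})|\frac{d\theta}{2\pi}=\log|f(0)|$; here $c_f=f(0)$ and $N_f(r,0)=N_f(r,\infty)=0$, so the identity holds. For $f(z)=z$ the left-hand side is $\log r$, while $c_f=1$, $N_f(r,0)=\log r$ (one zero, at the origin), and $N_f(r,\infty)=0$. For $f(z)=z-a$ with $a\ne 0$: if $|a|>r$, then $z-a$ is nowhere vanishing near $\{|z|\le r\}$ and the previous case gives both sides equal to $\log|a|$; if $|a|\le r$, write $\log|re^{i\theta}-a|=\log r+\log|e^{i\theta}-a/r|$ and use $\int_0^{2\pi}\log|e^{i\theta}-w|\frac{d\theta}{2\pi}=0$ for $|w|\le 1$ — which follows from the mean value property applied to the zero-free holomorphic function $\zeta\mapsto 1-\bar w\zeta$ when $|w|<1$, and from a direct computation (or continuity) when $|w|=1$ — to see that the left-hand side equals $\log r$, matching the right-hand side $N_f(r,0)-N_f(r,\infty)+\log|c_f|=\log(r/|a|)+0+\log|a|=\log r$.

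The proof is then completed by multiplicativity. The only genuinely analytic input is the mean value property for harmonic functions (equivalently, the evaluation of $\int_0^{2\pi}\log|e^{i\theta}-w|\,d\theta$); everything else is bookkeeping, and no step poses a real obstacle. The one point requiring a moment's care is the treatment of zeros or poles on the circle $|z|=r$ itself: such a point contributes $0$ to $N_f(r,0)-N_f(r,\infty)$ because $\log|r/z|=0$ when $|z|=r$, which is consistent with the boundary case $|a|=r$ of the block $f=z-a$ (there both sides equal $\log r=\log|a|$), and the relevant integrals still converge since $\log|re^{i\theta}-a|$ has only an integrable logarithmic singularity.
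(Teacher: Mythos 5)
Your proof is correct. Note that the paper does not actually prove this lemma: it is recalled as the classical Jensen formula, with the proof deferred to the cited references (Lang; Ru). Your argument is essentially the standard one from those sources: factor $f$ as $z^{m}u(z)\prod_j(z-a_j)/\prod_k(z-b_k)$ with $u$ zero-free near the closed disc, observe that both sides of the identity define homomorphisms in $f$ (so it suffices to verify it on the building blocks $z$, $z-a$, and zero-free $u$), and evaluate the blocks via the mean value property together with $\int_0^{2\pi}\log|e^{i\theta}-w|\,\frac{d\theta}{2\pi}=0$ for $|w|\le 1$. You also correctly handle the two points that are easy to overlook: zeros or poles on the circle $|z|=r$ (which contribute $0$ to $N_f(r,0)-N_f(r,\infty)$ and leave the boundary integral convergent because the singularity of $\log|re^{i\theta}-a|$ is integrable), and the case $\ord_0(f)=m\neq 0$, which is absorbed into the block $z$ via multiplicativity. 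The only cosmetic remark is that your reduction uses additivity not just for products but also for quotients (to move the factors $(z-b_k)$ across); this is immediate since the discrepancy between the two sides is a homomorphism to $(\mathbb{R},+)$, but it is worth stating explicitly.
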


Jensen's formula implies the first main theorem of Nevanlinna theory.
\begin{theorem}[First Main Theorem]\label{Cfirstmain}  Let $f$ be a non-constant  meromorphic function on $\CC$.  Then for every $a\in\CC$, and any positive real number $r$,
$$
m_f(a,r)+N_f(a,r)=T_f(r)+O(1),
$$
where $O(1)$ is independent of $r$.
\end{theorem}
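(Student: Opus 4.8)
The plan is to derive the identity directly from Jensen's formula (Lemma~\ref{Jensen}) applied to $f-a$, together with the elementary decomposition $\log x = \log^+ x - \log^+(1/x)$ and the subadditivity of $\log^+$. First I would observe that, since $f$ is non-constant and $a\in\CC$, the function $f-a$ is not identically zero, so Lemma~\ref{Jensen} yields
\begin{align*}
\int_0^{2\pi}\log|f(re^{i\theta})-a|\,\frac{d\theta}{2\pi}=N_{f-a}(0,r)-N_{f-a}(\infty,r)+\log|c_{f-a}|.
\end{align*}
The crucial bookkeeping point is that subtracting the constant $a$ changes neither the poles nor the preimage multiplicities: the zeros of $f-a$ are precisely the $a$-points of $f$, so $N_{f-a}(0,r)=N_f(a,r)$, while $f-a$ and $f$ have the same poles with the same multiplicities, so $N_{f-a}(\infty,r)=N_f(\infty,r)$.

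Next I would rewrite the left-hand integral. Applying $\log x = \log^+ x - \log^+(1/x)$ pointwise to $x = |f(re^{i\theta})-a|$ turns it into $m_{f-a}(\infty,r) - m_{1/(f-a)}(\infty,r)$, which by the definitions of the proximity function and of $m_f(a,r)$ equals $m_{f-a}(\infty,r) - m_f(a,r)$. Substituting this and the two counting-function identities into Jensen's formula and rearranging gives the exact relation
\begin{align*}
m_f(a,r)+N_f(a,r)=m_{f-a}(\infty,r)+N_f(\infty,r)-\log|c_{f-a}|=T_{f-a}(r)-\log|c_{f-a}|,
\end{align*}
where the last equality uses $N_{f-a}(\infty,r)=N_f(\infty,r)$ and the definition $T_{f-a}(r)=m_{f-a}(\infty,r)+N_{f-a}(\infty,r)$.

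It remains to compare $T_{f-a}(r)$ with $T_f(r)$. Since the pole counting functions already coincide, only the proximity terms differ, and the bound $\log^+|u+v|\le \log^+|u|+\log^+|v|+\log 2$ applied with $(u,v)=(f-a,a)$ and with $(u,v)=(f,-a)$, using that the constant $a$ has $m_a(\infty,r)=\log^+|a|$, gives $|m_{f-a}(\infty,r)-m_f(\infty,r)|\le \log^+|a|+\log 2$, hence $|T_{f-a}(r)-T_f(r)|\le \log^+|a|+\log 2$. Combining the displays yields $m_f(a,r)+N_f(a,r)=T_f(r)+O(1)$ with the error bounded in absolute value by $\log^+|a|+\log 2+\bigl|\log|c_{f-a}|\bigr|$, a quantity independent of $r$. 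The argument is entirely formal, so I do not expect a genuine obstacle; the only step requiring a little care is tracking the two constants $a$ and $c_{f-a}$ through the computation so as to confirm that the remainder truly does not depend on $r$.
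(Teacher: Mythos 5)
Your proof is correct and follows exactly the route the paper intends, namely deducing the First Main Theorem from Jensen's formula (Lemma~\ref{Jensen}) applied to $f-a$, using $\log x=\log^+x-\log^+(1/x)$ and the bound $\log^+|u+v|\le\log^+|u|+\log^+|v|+\log 2$ to compare $T_{f-a}(r)$ with $T_f(r)$. The paper gives no further details beyond citing Jensen's formula, and your bookkeeping of the constants $\log^+|a|+\log 2+\bigl|\log|c_{f-a}|\bigr|$ correctly shows the error term is independent of $r$.
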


%\begin{theorem}[Second Main Theorem]\label{SMT}  Let $f$ be a non-constant  meromorphic function on $\CC$, and $a_1,\hdots,a_q$ be distinct elements in $\CC\cup\{\infty\}$.
 %Then for $r>0$,
%$$
%(q-2)T_f(r)\le_{\rm exc}\sum_{i=1}^q N_f(a_i,r)+O(\log^+ T_f(r)),
%$$
%where $ \le_{\rm exc}  $ means the estimate holds except for $r$ in  a set of finite Lebesgue measure.
%\end{theorem}

\subsection{ Nevanlinna theory for Cartier divisors}

We recall some notation and properties  from \cite[Section 9 and 12]{Vojta2}. 
Let $D$ be a Cartier divisor on a complex variety $X$.   A Weil function for $D$ is a function   $\lambda_D:(X\setminus \operatorname{Supp}D)(\mathbb{C})\to \mathbb{R}$ such that for all $x\in X(\mathbb{C})$, there is an open neighborhood $U$ of $x$ in $X$, a nonzero function $f\in K(X)$ such that $D|_U=(f)$, and a continuous function $\alpha:U(\mathbb{C})\to \mathbb{R}$ such that 
$$
\lambda_D(x)=-\operatorname{log}|f(x)|+\alpha(x)
$$ 
for all $x\in (U\setminus\operatorname{Supp})(\mathbb{C})$.
 We note that when $D$ is effective $\lambda_D$ can be extended to a function $X \to \mathbb{R}\cup\{\infty\}$.
 Let $f:\mathbb{C}\to X$ be a holomorphic map whose image is not contained in the support of the divisor $D$ on $X$.  The proximity function of $f$ with respect to $D$ is defined by 
 $$
 m_f(D,r)=\int_0^{2\pi}\lambda_D(f (re^{i\theta}))\frac{d\theta}{2\pi}.
 $$
 Let $n_f(D,t)$ (respectively, $n^{(Q)}_f(D,t)$) be the number of zeros of $\rho\circ f$ inside $\{|z|<t\}$, counting multiplicity, (respectively, ignoring multiplicity larger than $Q\in\mathbb N$) with $\rho$ a local defining function for $D$.
The  {\it counting function} and {\it truncated counting function} of $f$ of order $Q$ at $\infty$  are  defined, respectively, by
$$
N_f(D,r) = \int_1^r \dfrac{n_f(D,t) }{t} dt,
$$
and 
$$
N^{(Q)}_f(D,r) = \int_1^r \dfrac{n^Q_f(D,t) }{t} dt.
$$
The characteristic function relative to $D$ is defined, up to $O(1)$, by 
$$
T_{D,f}(r)=m_f(D,r)+N_f(D,r).
$$
The following is the first main theorem.
\begin{theorem}
 Let $D$ and $D'$ be Cartier divisors on $X$ whose supports do not contain the image of $f$, and suppose that $D'$ is linearly equivalent to $D$. Then $$T_{D',f}(r)=T_{D,f}(r)+O(1).$$
\end{theorem}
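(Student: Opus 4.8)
The plan is to reduce the statement to Jensen's formula (Lemma~\ref{Jensen}) together with the standard additivity properties of Weil functions and counting functions recalled from \cite[Sections 9 and 12]{Vojta2}. Since $D'$ is linearly equivalent to $D$, write $D'-D=(\phi)$ for some nonzero $\phi\in K(X)$. As $\operatorname{Supp}(\phi)\subseteq\operatorname{Supp}D\cup\operatorname{Supp}D'$ and the Zariski closure of the image of $f$ is irreducible and lies in neither support, it is not contained in their union; hence $\phi\circ f$ is a meromorphic function on $\CC$ that is not identically zero, and Jensen's formula applies to it.

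The first step is to compare proximity functions. The function $\lambda_{D'}-\lambda_D$ is a Weil function for the principal divisor $(\phi)$, and so is $-\log|\phi|$; since Weil functions for a fixed divisor agree up to a bounded term, $\lambda_{D'}-\lambda_D=-\log|\phi|+O(1)$ off the supports. Integrating over the circle of radius $r$ gives
$$
m_f(D',r)-m_f(D,r)=-\int_0^{2\pi}\log\bigl|(\phi\circ f)(re^{i\theta})\bigr|\,\frac{d\theta}{2\pi}+O(1).
$$
The second step is to compare counting functions. Because the order of a local defining equation at a point is additive in the divisor, one has $n_f(D',t)-n_f(D,t)=n_f((\phi),t)$ for every $t$, and integrating (and passing between the $\int_1^r$ normalization used for $N_f(D,r)$ and the one appearing in Lemma~\ref{Jensen}, at the cost of an $O(1)$ term) yields
$$
N_f(D',r)-N_f(D,r)=N_{\phi\circ f}(0,r)-N_{\phi\circ f}(\infty,r)+O(1).
$$

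The last step is to add these two identities and apply Jensen's formula to $\phi\circ f$,
$$
\int_0^{2\pi}\log\bigl|(\phi\circ f)(re^{i\theta})\bigr|\,\frac{d\theta}{2\pi}=N_{\phi\circ f}(0,r)-N_{\phi\circ f}(\infty,r)+\log|c|,
$$
with $c$ the leading Laurent coefficient of $\phi\circ f$ at the origin. The two counting-function contributions cancel, leaving $T_{D',f}(r)-T_{D,f}(r)=\bigl(m_f(D',r)-m_f(D,r)\bigr)+\bigl(N_f(D',r)-N_f(D,r)\bigr)=O(1)$. I expect the only genuinely delicate point to be the claim that $\lambda_{D'}-\lambda_D+\log|\phi|$ is bounded: when $X$ is projective this is clear because the difference extends to a continuous function on the compact space $X(\CC)$, and in general it is precisely the uniqueness-up-to-$O(1)$ property of Weil functions attached to a fixed divisor. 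Everything else is routine bookkeeping.
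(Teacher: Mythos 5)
Your argument is correct and is precisely the standard proof of the first main theorem: write $D'-D=(\phi)$, use additivity and uniqueness up to $O(1)$ of Weil functions to compare proximity terms, use additivity of orders of local defining functions to compare counting terms, and let Jensen's formula applied to $\phi\circ f$ make the two discrepancies cancel. The paper itself states this result without proof, recalling it from Vojta's notes, and the proof there is essentially the one you give (with the unboundedness issues handled exactly by the projectivity/compactness point you flag, and with the counting function for a not necessarily effective divisor understood via the signed order of $\rho\circ f$, which is the interpretation your additivity step uses).
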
    
In particular, let $D$ be a hypersurface in $\mathbb P^n(\CC)$ defined by a homogeneous polynomial $F$ of degree $d$.    The Weil function for $D$ can be taken as 
\begin{align*}
\lambda_D({\bf x})=-\log \frac{|F({\bf x})|}{\max\{|x_0|,\dots ,|x_n|\}^d} 
\end{align*}
for  ${\bf x}=[x_0:\cdots:x_n]\in\mathbb P^n(\CC)$.
Let  ${\mathbf f}: \CC \rightarrow \mathbb P^n(\CC)$  be a holomorphic map and $(f_0, \dots ,f_n)$ be a reduced representation of ${\mathbf f}$, i.e. $f_0,\dots, f_n$ are  entire functions on $\CC$ without common zeros such that for all $z\in \CC$ we have ${\mathbf f}(z) = [f_0(z):\cdots : f_n(z)]$. The   {\it characteristic function}  $T_{\mathbf f}(r)$ is defined by
$$
T_{\mathbf f}(r) =   \int_0^{2\pi} \log\|{\mathbf f}(re^{i\theta})\|\frac{d\theta}{2\pi},
$$
where $\|{\mathbf f}(z)\| = \max\{|f_0(z)|,\dots ,|f_n(z)|\}$.
This definition is independent, up to an additive constant, of the choice of the reduced representation of ${\mathbf f}$.
In this context, the First Main Theorem reads as follows. 
\begin{theorem}\label{FMT}
Let  ${\mathbf f}: \CC \rightarrow \mathbb P^n(\CC)$  be a holomorphic map, and let $D$ be a hypersurface in $\mathbb P^n(\CC)$ 
of degree $d$.  If $f(\CC)\not\subset D$, then for $r>0$,
 \begin{align*} 
 dT_{\mathbf f}(r) =m_{\mathbf f}(r,D)+N_{\mathbf f}(r,D)+O(1),
 \end{align*}
where $O(1)$ is bounded independently of $r$.
\end{theorem}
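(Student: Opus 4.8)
The plan is to reduce the whole identity to Jensen's formula (Lemma~\ref{Jensen}) applied to the single entire function obtained by substituting a reduced representation of $\mathbf f$ into $F$. Fix a reduced representation $(f_0,\dots,f_n)$ of $\mathbf f$, so the $f_i$ are entire with no common zero, and set $h:=F(f_0,\dots,f_n)$. Since $F$ is homogeneous of degree $d$ and the $f_i$ are entire, $h$ is entire; since $\mathbf f(\CC)\not\subset D$, $h$ is not identically zero, so Jensen's formula is applicable to $h$.

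First I would unwind the proximity term. Using the displayed Weil function $\lambda_D(\mathbf x)=-\log|F(\mathbf x)|+d\log\|\mathbf x\|$ together with the definition of $m_{\mathbf f}(r,D)$,
$$
m_{\mathbf f}(r,D)=-\int_0^{2\pi}\log|h(re^{i\theta})|\,\frac{d\theta}{2\pi}+d\int_0^{2\pi}\log\|\mathbf f(re^{i\theta})\|\,\frac{d\theta}{2\pi}=-\int_0^{2\pi}\log|h(re^{i\theta})|\,\frac{d\theta}{2\pi}+d\,T_{\mathbf f}(r),
$$
the last equality being the definition of $T_{\mathbf f}(r)$. Next I would identify the counting term: because the $f_i$ have no common zero, a local defining function for $D$ pulls back along $\mathbf f$ to $h$ up to a nonvanishing entire factor, so $n_{\mathbf f}(D,t)$ counts the zeros of $h$ in $|z|<t$, whence $N_{\mathbf f}(r,D)=N_h(0,r)+O(1)$, the $O(1)$ absorbing only the difference between the two integration ranges in the respective definitions of the counting function. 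Applying Lemma~\ref{Jensen} to $h$ (which has no poles, so $N_h(\infty,r)=0$) gives
$$
\int_0^{2\pi}\log|h(re^{i\theta})|\,\frac{d\theta}{2\pi}=N_h(0,r)+\log|c_h|=N_{\mathbf f}(r,D)+O(1).
$$
Substituting this into the previous display and rearranging yields $m_{\mathbf f}(r,D)+N_{\mathbf f}(r,D)=d\,T_{\mathbf f}(r)+O(1)$, as claimed.

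The only genuinely delicate points are routine verifications: that $h\not\equiv 0$ so that Jensen's formula applies (this is exactly where the hypothesis $\mathbf f(\CC)\not\subset D$ enters); that the local defining equation of $D$ along the image of $\mathbf f$ differs from $h$ only by an entire nonvanishing factor, so that the zero divisors—and hence the counting functions—coincide up to $O(1)$; and keeping track of the constants, namely the leading Laurent coefficient term $\log|c_h|$ and the mismatch of integration limits, all of which are absorbed into the $O(1)$. Finally, independence of the identity from the chosen reduced representation follows from the already-noted independence of $T_{\mathbf f}(r)$, since passing to another representation multiplies $h$ by a nowhere-vanishing entire function, which changes neither $N_{\mathbf f}(r,D)$ nor, by Jensen's formula again, the circular average of $\log|h|$ beyond an additive constant.
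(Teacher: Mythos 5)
Your argument is correct: the reduction of $m_{\mathbf f}(D,r)$ via the displayed Weil function, the identification $N_{\mathbf f}(D,r)=N_h(0,r)+O(1)$ using that the reduced representation has no common zeros, and the application of Lemma~\ref{Jensen} to the entire function $h=F(f_0,\dots,f_n)$ give exactly the stated identity. The paper records Theorem~\ref{FMT} as the classical First Main Theorem without proof, and your Jensen-formula derivation is precisely the standard argument in the framework the paper sets up, so there is nothing to add.
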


We will make use of the following elementary inequality.

\begin{lemma}
\label{gineq}
Let $g_1,\ldots, g_n$ be meromorphic functions.  Let ${\bf g}:=[1:g_1:\cdots:g_n]:\CC\to\PP^n.$  Then
\begin{align*}
T_{\bf g}(r)\leq \sum_{i=1}^nT_{g_i}(r)+O(1).
\end{align*}
\end{lemma}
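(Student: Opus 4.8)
The plan is to compute $T_{\bf g}(r)$ from an explicit, though not reduced, \emph{entire} representation of ${\bf g}$ and then estimate it term by term. First I would write each $g_j = a_j/b_j$ with $a_j,b_j$ entire and without common zeros, and note that ${\bf g} = [h_0:h_1:\cdots:h_n]$, where $h_0 = b_1\cdots b_n$ and $h_i = a_i\prod_{j\neq i}b_j$ for $1\le i\le n$; each $h_i$ is entire. Choosing an entire common divisor $\psi$ of $h_0,\dots,h_n$ (which exists by the Weierstrass factorization theorem) so that $(h_0/\psi,\dots,h_n/\psi)$ is a reduced representation of ${\bf g}$, the definition of the characteristic function combined with Jensen's formula (Lemma~\ref{Jensen}) applied to the entire function $\psi$ gives
$$T_{\bf g}(r) = \int_0^{2\pi}\log\max_{0\le i\le n}|h_i(re^{i\theta})|\,\frac{d\theta}{2\pi} - N_\psi(0,r) + O(1).$$
Since $N_\psi(0,r)\ge 0$, this term may simply be dropped in an upper bound, leaving $T_{\bf g}(r)\le \int_0^{2\pi}\log\max_i|h_i(re^{i\theta})|\,\frac{d\theta}{2\pi} + O(1)$.

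Next I would use the elementary pointwise bound
$$\max_{0\le i\le n}|h_i(z)| \le \prod_{j=1}^n \max\{|a_j(z)|,|b_j(z)|\},$$
checked coordinate by coordinate (immediate for $h_0$, and for $h_i$ bound $|a_i|\le\max\{|a_i|,|b_i|\}$ and each $|b_j|\le\max\{|a_j|,|b_j|\}$). Taking logarithms and integrating yields $T_{\bf g}(r)\le \sum_{j=1}^n\int_0^{2\pi}\log\max\{|a_j(re^{i\theta})|,|b_j(re^{i\theta})|\}\,\frac{d\theta}{2\pi} + O(1)$. Finally, since $a_j,b_j$ are entire without common zeros, $(b_j,a_j)$ is a reduced representation of the holomorphic map $[1:g_j]=[b_j:a_j]:\CC\to\PP^1$, so the $j$-th integral is exactly $T_{[1:g_j]}(r)$, and $T_{[1:g_j]}(r) = T_{g_j}(r)+O(1)$ by the First Main Theorem (Theorem~\ref{Cfirstmain}); alternatively, writing $\max\{|b_j|,|a_j|\}=|b_j|\max\{1,|g_j|\}$ and applying Jensen's formula once more identifies the integral with $N_{g_j}(\infty,r)+m_{g_j}(\infty,r)+O(1)=T_{g_j}(r)+O(1)$. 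Combining the displays gives the lemma.

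The argument is routine; the one point deserving attention is the very first step — passing from the convenient non-reduced representation $(h_0,\dots,h_n)$ to an honest reduced one, which is needed in order to invoke the definition of $T_{\bf g}$. It is there that it matters that every $h_i$ is entire: the cleared common factor $\psi$ is then entire, so its Jensen contribution $N_\psi(0,r)$ is nonnegative and may be discarded when passing to an inequality. Everything else is bookkeeping with $\log\max$ and $\log^+$.
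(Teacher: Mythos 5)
Your argument is correct, but it follows a different route from the paper. The paper's proof stays entirely at the level of the coordinate hyperplane $H=\{x_0=0\}$: it writes $T_{\bf g}(r)=m_{\bf g}(H,r)+N_{\bf g}(H,r)+O(1)$ and then bounds the two pieces separately via $\lambda_H({\bf g}(z))=\log\max\{1,|g_1(z)|,\ldots,|g_n(z)|\}\le\sum_i\log^+|g_i(z)|$ and $n_{\bf g}(H,r)\le\sum_i n_{g_i}(\infty,r)$, so that each summand reassembles into $m_{g_i}(\infty,r)+N_{g_i}(\infty,r)=T_{g_i}(r)$. You instead work with an explicit (non-reduced) entire lift $h_0=b_1\cdots b_n$, $h_i=a_i\prod_{j\ne i}b_j$ built from coprime representations $g_j=a_j/b_j$, control the passage to a reduced representation by applying Jensen's formula to the entire common factor $\psi$ (whose contribution $-N_\psi(0,r)\le 0$ you may discard), and then use the multiplicative pointwise bound $\max_i|h_i|\le\prod_j\max\{|a_j|,|b_j|\}$ to split the integral into the one-variable characteristics $T_{[1:g_j]}(r)$. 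The paper's version is shorter because it reuses the Weil-function/counting-function formalism already set up for divisors on $\PP^n$, while yours is self-contained modulo Jensen and the Weierstrass-type existence of the coprime fractions and of the gcd $\psi$; your product bound also packages the proximity and counting estimates in a single inequality rather than treating them separately. One small point: the identity $T_{[1:g_j]}(r)=T_{g_j}(r)+O(1)$ is not literally Theorem \ref{Cfirstmain} (which compares $m+N$ at a value $a$ with $T_f$); it is the standard comparison of Cartan's and Nevanlinna's characteristics, and the alternative justification you give—writing $\max\{|b_j|,|a_j|\}=|b_j|\max\{1,|g_j|\}$ and applying Jensen to $b_j$ so the integral becomes $N_{g_j}(\infty,r)+m_{g_j}(\infty,r)+O(1)$—is the correct and complete argument, so no gap remains.
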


\begin{proof}
Let $H$ be the hyperplane in $\PP^n$ defined by $x_0=0$.  Then clearly
\begin{align*}
\lambda_H({\bf g(z)})=\log\max\{1, |g_1(z)|,\ldots, |g_n(z)|\}\leq \sum_{i=1}^n \log^+|g_i(z)|
\end{align*}
and
\begin{align*}
n_{\bf g}(H,r)\leq \sum_{i=1}^n n_{g_i}(\infty,r).
\end{align*}
After integrating, it follows from the definitions that
\begin{align*}
T_{\bf g}(r)&=m_{\bf g}(H,r)+N_{\bf g}(H,r)+O(1)\leq \sum_{i=1}^n \left(m_{g_i}(\infty,r)+N_{g_i}(\infty,r)\right)+O(1)\\
&\leq \sum_{i=1}^n T_{g_i}(r)+O(1).
\end{align*}

\end{proof}

We now recall the following general form of the second main theorem from  \cite[Theorem A3.1.3]{ru2001nevanlinna} which was proved by Vojta in \cite[Theorem 1]{vojta1997} and Ru in  \cite[Theorem 2.3]{ru1997general}.
\begin{theorem}
\label{gsmt}
    Let $f=(f_0,\hdots,f_n):\mathbb{C}\to\mathbb{P}^n(\mathbb{C})$ be a holomorphic curve whose image is not contained in any proper linear subspace, and where $f_0,\dots,f_n$ are entire functions with no common zeros. Let $H_1,\dots,H_q$ be arbitrary hyperplanes in $\mathbb{P}^n(\mathbb{C})$. Denote by $W(f)$ the Wronskian of $f_0,\dots,f_n$. Then for any $\varepsilon>0$, we have the inequality 
    $$\int_0^{2\pi} \max_J \sum_{k\in J}\lambda_{H_k}(f(re^{i\theta}))\frac{d\theta}{2\pi}+N_{W(f)}(0,r)\leq_{\operatorname{exc}} (n+1+\varepsilon)T_f(r)+o(T_f(r)),$$  where the maximum is taken over all subsets $J$ of $\{1,\dots, q\}$ such that the hyperplanes $H_j$,  $j\in J$, are in general position.
\end{theorem}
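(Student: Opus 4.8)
The statement is the Cartan Second Main Theorem in the refined form carrying a Wronskian term, due to Vojta and Ru, and the plan is to run the classical Cartan-type argument directly for the maximum over general-position subsets. I would begin by fixing linear forms $L_1,\dots,L_q$ with $H_j=\{L_j=0\}$, a reduced representation $(f_0,\dots,f_n)$ of $f$, and the entire functions $g_j:=L_j(f_0,\dots,f_n)$; since $f(\CC)\not\subset H_j$ we have $g_j\not\equiv 0$ for every $j$. Because $f$ is linearly nondegenerate, the Wronskian $W=W(f)=W(f_0,\dots,f_n)$ is not identically $0$, and for every $(n+1)$-element subset $J'=\{k_0,\dots,k_n\}\subseteq\{1,\dots,q\}$ for which $L_{k_0},\dots,L_{k_n}$ are linearly independent one has $W=c_{J'}\,W(g_{k_0},\dots,g_{k_n})$ for some $c_{J'}\in\CC^*$. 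I would also record the standard fact that $T_{g_j}(r)=O(T_f(r))$, which holds because $g_j$ is a linear combination of $f_0,\dots,f_n$.

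The heart of the argument is a pointwise estimate at a fixed $z\in\CC$. Let $J$ attain the maximum on the left-hand side and, after relabeling, let $k_0,\dots,k_n\in J$ be the $n+1$ indices with the largest values $\lambda_{H_k}(f(z))$, equivalently the smallest values of $|g_k(z)|/\|f(z)\|$. Since $\{H_k\}_{k\in J}$ is in general position, $f(z)$ cannot be simultaneously close to $n+1$ of these hyperplanes, so there is a constant $\delta>0$, depending only on the finitely many possible configurations and not on $z$, with $|g_k(z)|\ge\delta\|f(z)\|$ for the remaining indices $k\in J$; hence $\sum_{k\in J\setminus\{k_0,\dots,k_n\}}\log|g_k(z)|=(|J|-n-1)\log\|f(z)\|+O(1)$. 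Writing $W(g_{k_0},\dots,g_{k_n})=(g_{k_0}\cdots g_{k_n})\,P_{J'}$, where $P_{J'}=\det\bigl(g_{k_i}^{(\ell)}/g_{k_i}\bigr)_{0\le i,\ell\le n}$ is a polynomial in the logarithmic derivatives of $g_{k_0},\dots,g_{k_n}$, and recalling $W=c_{J'}W(g_{k_0},\dots,g_{k_n})$, one gets $\sum_{i=0}^n\log|g_{k_i}(z)|=\log|W(z)|-\log|P_{J'}(z)|+O(1)$. Since $\lambda_{H_k}(f(z))=\log\|f(z)\|-\log|g_k(z)|+O(1)$, summing over $k\in J$ and combining the last two relations produces
\begin{align*}
\max_J\sum_{k\in J}\lambda_{H_k}(f(z))+\log|W(z)| &\le (n+1)\log\|f(z)\| \\
&\qquad {}+\max_{J'}\log^+|P_{J'}(z)|+O(1),
\end{align*}
where the inner maximum runs over the finitely many admissible $(n+1)$-subsets $J'$. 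When $q<n+1$, or more generally when the maximum is attained by a $J$ with $|J|<n+1$, one first enlarges the relevant family by auxiliary hyperplanes to a general-position family of $n+1$ hyperplanes, whose extra (nonnegative) proximity contributions are simply dropped.

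Next I would integrate this inequality over $\theta\in[0,2\pi]$ against $\frac{d\theta}{2\pi}$. The term $(n+1)\int_0^{2\pi}\log\|f(re^{i\theta})\|\frac{d\theta}{2\pi}$ is exactly $(n+1)T_f(r)$. For the Wronskian term, Jensen's formula (Lemma~\ref{Jensen}) gives $\int_0^{2\pi}\log|W(re^{i\theta})|\frac{d\theta}{2\pi}=N_W(0,r)-N_W(\infty,r)+O(1)$, and $W$ is entire because $f_0,\dots,f_n$ are, so $N_W(\infty,r)=0$ and this integral equals $N_{W(f)}(0,r)+O(1)$; thus the integrated left-hand side is precisely $\int_0^{2\pi}\max_J\sum_{k\in J}\lambda_{H_k}(f(re^{i\theta}))\frac{d\theta}{2\pi}+N_{W(f)}(0,r)+O(1)$. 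For the remaining error, $\log^+|P_{J'}|$ is dominated by a finite sum of terms $\log^+|g_k^{(\ell)}/g_k|$, so $\int_0^{2\pi}\log^+|P_{J'}(re^{i\theta})|\frac{d\theta}{2\pi}$ is bounded by a finite sum of proximity functions $m_{g_k^{(\ell)}/g_k}(\infty,r)$; by the Nevanlinna lemma on the logarithmic derivative each such proximity function is $\leq_{\operatorname{exc}} o(T_{g_k}(r))=o(T_f(r))$, and since there are only finitely many $J'$ the maximum obeys the same bound. Assembling everything yields the stated inequality, the extra $\varepsilon$ in $(n+1+\varepsilon)$ serving to absorb the $O(1)$ and the slack in whichever effective form of the logarithmic-derivative estimate one uses.

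The main obstacle is the analytic input: the lemma on the logarithmic derivative, together with the Borel-type growth lemma responsible for the exceptional set of finite Lebesgue measure attached to the $o(T_f(r))$ error term. This is where all of the real work beyond linear algebra (the Wronskian identities) and Jensen's formula resides. A secondary delicate point is ensuring the pointwise Cartan estimate is uniform over the optimal subset $J$, which varies with $z$, and, relatedly, handling degenerate configurations in which no $n+1$ of the hyperplanes $H_k$ are linearly independent, where the maximum is necessarily attained by subsets of size at most $n$ and the Wronskian term must be controlled by comparison with an enlarged general-position family.
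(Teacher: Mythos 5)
The paper does not prove this statement at all: Theorem \ref{gsmt} is quoted as known, with references to Ru's book (Theorem A3.1.3), Vojta (1997), and Ru (1997), so there is no internal proof to compare yours against. What you have written is, in essence, the standard Cartan-style proof of exactly that cited result — keep the Wronskian term, at each $z$ select the optimal general-position subset $J$, isolate the $n+1$ forms closest to $f(z)$ (or complete a small $J$ to a linearly independent family of $n+1$ forms), use that any $n+1$ members of a general-position family are linearly independent so that $W(f)$ agrees with $W(g_{k_0},\dots,g_{k_n})$ up to a nonzero constant, factor out $g_{k_0}\cdots g_{k_n}$ to reduce the error to logarithmic derivatives, and integrate using Jensen's formula and the lemma on the logarithmic derivative — and the argument is correct as a sketch, including the uniformity over the finitely many subsets $J$, $J'$ and the bound $|g_k(z)|\ge\delta\|f(z)\|$ for the non-selected indices. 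The only point worth tightening is the final error bookkeeping: the logarithmic-derivative lemma gives $O(\log^+ T_{g_k}(r)+\log r)$ off an exceptional set, which is $o(T_f(r))$ only when $\log r=o(T_f(r))$; for curves of logarithmic growth (rational curves, or when some $g_k$ is a polynomial) one should note separately that $m_{g_k^{(\ell)}/g_k}(\infty,r)=O(1)$ in that case, or invoke the $\varepsilon T_f(r)$ term as you indicate, so the stated form of the inequality still follows.
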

Finally, we recall Cartan's second main theorem with truncated counting functions. (See \cite[Theorem A3.2.2]{ru2001nevanlinna}.)
\begin{theorem}
\label{tsmt} Let $H_1,\dots,H_q$ be  hyperplanes in $\mathbb{P}^n(\mathbb{C})$ in general position. 
    Let $f=(f_0,\hdots,f_n):\mathbb{C}\to\mathbb{P}^n(\mathbb{C})$ be a holomorphic curve whose image is not contained in any proper linear subspace. Then for any $\varepsilon>0$, we have the following inequality: 
    $$
     (q-n-1-\varepsilon)T_f(r)\le_{\rm exc} \sum_{i=1}^q N_f^{(n)}(H_i,r).$$   
\end{theorem}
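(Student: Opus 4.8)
The plan is to deduce Theorem~\ref{tsmt} from the general second main theorem with Wronskian term, Theorem~\ref{gsmt}, together with the First Main Theorem (Theorem~\ref{FMT}) and a local estimate on the order of vanishing of the Wronskian. We may assume $q\ge n+2$, since otherwise the coefficient $q-n-1-\varepsilon$ is nonpositive and the inequality is trivial; in particular $f$ is non-constant and $T_f(r)\to\infty$. Fix a reduced representation $f=(f_0,\dots,f_n)$, let $L_k$ be a linear form defining $H_k$, and normalize the Weil functions so that $\lambda_{H_k}\ge 0$. Since all $q$ hyperplanes are in general position, the subset $J=\{1,\dots,q\}$ is admissible in Theorem~\ref{gsmt}, and as the Weil functions are nonnegative the maximum there equals $\sum_{k=1}^q\lambda_{H_k}$. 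Using $\int_0^{2\pi}\lambda_{H_k}(f(re^{i\theta}))\frac{d\theta}{2\pi}=m_f(H_k,r)$ and the First Main Theorem $m_f(H_k,r)=T_f(r)-N_f(H_k,r)+O(1)$, Theorem~\ref{gsmt} becomes
\begin{equation*}
qT_f(r)-\sum_{k=1}^q N_f(H_k,r)+N_{W(f)}(0,r)\le_{\rm exc}(n+1+\varepsilon)T_f(r)+o(T_f(r)),
\end{equation*}
i.e.\ $(q-n-1-\varepsilon)T_f(r)\le_{\rm exc}\sum_{k=1}^q N_f(H_k,r)-N_{W(f)}(0,r)+o(T_f(r))$.

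The heart of the argument is the pointwise inequality, valid for every $z_0\in\CC$,
\begin{equation*}
\sum_{k=1}^q\ord_{z_0}(L_k(f))-\ord_{z_0}(W(f))\le\sum_{k=1}^q\min\{n,\ord_{z_0}(L_k(f))\},
\end{equation*}
where $L_k(f)=L_k(f_0,\dots,f_n)$ is an entire function (not identically zero, as $f_0,\dots,f_n$ are linearly independent) whose order of vanishing at $z_0$ records the intersection multiplicity of $f$ with $H_k$ there, since $f$ is reduced. To prove it, let $I=\{k:f(z_0)\in H_k\}$ and $m_k=\ord_{z_0}(L_k(f))\ge 1$ for $k\in I$. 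General position forces any $n+1$ of the $H_k$ to have empty common intersection, so $|I|\le n$; write $I=\{k_1,\dots,k_s\}$ with $s\le n$ and complete $L_{k_1},\dots,L_{k_s}$ to a basis $L_{k_1},\dots,L_{k_s},M_{s+1},\dots,M_{n+1}$ of linear forms. Then $W(f_0,\dots,f_n)$ equals a nonzero constant (the determinant of the change of basis) times $W(L_{k_1}(f),\dots,L_{k_s}(f),M_{s+1}(f),\dots,M_{n+1}(f))$. In the Wronskian determinant $\det(h_j^{(i-1)})_{1\le i,j\le n+1}$, a column coming from a function $h$ with $\ord_{z_0}(h)=m$ has all entries divisible by $(z-z_0)^{\max\{0,m-n\}}$, so that factor may be pulled out of that column; doing this for the $s$ columns indexed by $I$ gives $\ord_{z_0}(W(f))\ge\sum_{j=1}^s\max\{0,m_{k_j}-n\}$. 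Since $\sum_{k\in I}m_k-\sum_{k\in I}\max\{0,m_k-n\}=\sum_{k\in I}\min\{n,m_k\}$ and all terms with $k\notin I$ vanish, the pointwise inequality follows. This local Wronskian estimate — the interplay of the general-position bound $|I|\le n$ with the order-of-vanishing bound for the Wronskian after a linear change of coordinates — is the only genuinely delicate step; everything else is formal manipulation of known inequalities.

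Finally I would sum the pointwise inequality over the points $z_0$ with $0<|z_0|\le t$, add the contribution at $z_0=0$, integrate $\int_1^r(\cdot)\,dt/t$, and invoke the definitions of $N_f(H_k,r)$, $N_{W(f)}(0,r)$ and $N_f^{(n)}(H_k,r)$ to obtain
\begin{equation*}
\sum_{k=1}^q N_f(H_k,r)-N_{W(f)}(0,r)\le\sum_{k=1}^q N_f^{(n)}(H_k,r)+O(1).
\end{equation*}
Combining this with the displayed inequality from the first paragraph yields $(q-n-1-\varepsilon)T_f(r)\le_{\rm exc}\sum_{k=1}^q N_f^{(n)}(H_k,r)+o(T_f(r))+O(1)$; since $T_f(r)\to\infty$, the term $o(T_f(r))+O(1)$ is absorbed by applying the estimate with $\varepsilon$ replaced by $\varepsilon/2$, which gives the statement of the theorem.
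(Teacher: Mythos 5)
Your proposal is correct. Note, however, that the paper does not prove Theorem \ref{tsmt} at all: it is quoted as a known result (Cartan's second main theorem with truncation, cited from Ru's book, Theorem A3.2.2), so there is no internal proof to compare against. What you have written is essentially the classical derivation of the truncated statement from the Wronskian form of the second main theorem (here Theorem \ref{gsmt}): the reduction via the First Main Theorem, the observation that general position makes $J=\{1,\dots,q\}$ admissible so the max term is the full sum of proximity functions, and the local estimate $\sum_k \ord_{z_0}^+(L_k(f))-\ord_{z_0}(W(f))\le \sum_k\min\{n,\ord_{z_0}^+(L_k(f))\}$, proved exactly as you do by bounding $|I|\le n$ through general position, changing to a basis of linear forms containing the $L_k$, $k\in I$ (which multiplies the Wronskian by a nonzero constant), and pulling $(z-z_0)^{\max\{0,m_k-n\}}$ out of each corresponding column. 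Two small points worth being aware of, neither of which affects correctness: the divisor-style counting functions $N_f(H_k,r)$, $N_f^{(n)}(H_k,r)$ of the paper and the function-style counting functions of zeros of $L_k(f)$ and $W(f)$ agree only up to $O(1)$ (bounded by the contribution of zeros in the unit disk), which is harmless since you absorb $o(T_f(r))+O(1)$ by shrinking $\varepsilon$ and enlarging the exceptional set by a set of finite measure; and the nonnegativity normalization of the Weil functions costs only an $O(1)$ per hyperplane. So your argument is a legitimate, self-contained proof of the quoted theorem, consistent with how it is established in the cited source.
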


 %%%%%%%%

\subsection{Counting functions and proximity functions for closed subschemes}
We first recall some basic properties of  Weil functions associated to closed subschemes from \cite[Section 2]{Sil}.
 Let $Y$ be a closed subscheme on a projective variety $X$ defined over $\CC$.
Then one can associate to it a function
$$
\lambda_{Y}: X(\mathbb{C})\setminus \operatorname{Supp}(Y)\to \mathbb R
$$
satisfying some functorial properties (up to a O(1)) analogous to the arithmetic case as described 
in \cite[Theorem 2.1]{Sil}.
Intuitively, for each point $P\in X(\mathbb{C})$,
$$
\lambda_{Y}( P)=-\log(\text{distance from $P$ to $Y$}).
$$
The following lemma will be used to define Weil functions for closed subschemes.
\begin{lemma}\label{representation}
Let $Y$ be a closed subscheme of $X$.  There exist effective divisors $D_1,\cdots,D_r$ such that 
$$
Y=\cap_{i=1}^r D_i.
$$
\end{lemma}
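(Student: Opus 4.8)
The plan is to realize $Y$ as a scheme-theoretic intersection of effective Cartier divisors cut out by global sections of a sufficiently positive twist of the ideal sheaf $\mathcal{I}_Y$. Since $X$ is projective, first fix an ample line bundle $\mathcal{L}$ on $X$. By Serre's theorem there is an integer $m$ such that $\mathcal{I}_Y\otimes\mathcal{L}^{\otimes m}$ is generated by its global sections; as $H^0(X,\mathcal{I}_Y\otimes\mathcal{L}^{\otimes m})$ is a finite-dimensional $\CC$-vector space, we may choose finitely many sections $s_1,\dots,s_r\in H^0(X,\mathcal{I}_Y\otimes\mathcal{L}^{\otimes m})$ that already generate this sheaf. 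Regarding each $s_i$ as a global section of $\mathcal{L}^{\otimes m}$ via the inclusion $\mathcal{I}_Y\otimes\mathcal{L}^{\otimes m}\hookrightarrow\mathcal{L}^{\otimes m}$, let $D_i$ be its divisor of zeros; this is an effective Cartier divisor on $X$, whose ideal sheaf $\mathcal{I}_{D_i}\subseteq\mathcal{O}_X$ is the image of the map $\mathcal{L}^{\otimes(-m)}\xrightarrow{\,\cdot\, s_i\,}\mathcal{O}_X$.

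Next I would check the two containments locally. Fix $x\in X$ and an affine open neighborhood $U$ on which $\mathcal{L}^{\otimes m}$ is trivial; under a chosen trivialization each $s_i|_U$ corresponds to a regular function $f_i\in\mathcal{O}_X(U)$ with $\mathcal{I}_{D_i}|_U=f_i\mathcal{O}_U$. Since $s_i$ is a section of $\mathcal{I}_Y\otimes\mathcal{L}^{\otimes m}$, we get $f_i\in\mathcal{I}_Y(U)$, hence $\mathcal{I}_{D_i}\subseteq\mathcal{I}_Y$ on $U$; as $x$ is arbitrary this yields $Y\subseteq D_i$ for each $i$, so $Y\subseteq\bigcap_{i=1}^rD_i$. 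Conversely, global generation of $\mathcal{I}_Y\otimes\mathcal{L}^{\otimes m}$ by $s_1,\dots,s_r$ says precisely that $f_1,\dots,f_r$ generate $\mathcal{I}_Y$ at the stalk of every point of $U$, whence $\mathcal{I}_Y|_U=\sum_{i=1}^rf_i\mathcal{O}_U=\sum_{i=1}^r\mathcal{I}_{D_i}|_U$. Globalizing, $\mathcal{I}_Y=\sum_{i=1}^r\mathcal{I}_{D_i}$, which is exactly the assertion $Y=\bigcap_{i=1}^rD_i$ as closed subschemes.

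The argument is essentially bookkeeping; the only point requiring care is this last identification, namely that global generation of the twisted ideal sheaf translates faithfully into local generation of $\mathcal{I}_Y$ by the functions $f_i$, together with the computation of the ideal sheaf of the zero divisor of a section of a line bundle. Both facts are standard but worth spelling out, since the construction of Weil functions for closed subschemes (defined, roughly, as a minimum or combination of the $\lambda_{D_i}$) rests on this decomposition. An alternative, more hands-on route avoids sheaf cohomology: fix a projective embedding $X\hookrightarrow\PP^N$, take for the $F_i$ a finite set of generators of the saturated homogeneous ideal of $Y$, brought to a common degree by multiplying by monomials — an operation that alters the ideal only within the irrelevant ideal and hence does not change the associated subscheme — and set $D_i=X\cap\{F_i=0\}$.
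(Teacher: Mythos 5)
Your proof is correct; the paper itself only cites Silverman (Lemma 2.2 of \emph{Arithmetic distance functions and height functions in Diophantine geometry}), and your argument — twist $\mathcal{I}_Y$ by a large power of an ample line bundle, take finitely many generating global sections, and intersect their zero divisors — is essentially the standard proof given there. The only caveat worth noting is the implicit assumption $Y\neq X$ (so that nonzero sections exist and each defines an effective divisor on the variety $X$), which is harmless in the paper's applications where $Y$ has codimension at least $2$.
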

\begin{proof}
See Lemma 2.2 from \cite{Sil}.
\end{proof}
\begin{definition}Let $X$ be a projective variety over $\CC$ and let $Y\subset X$ be a closed subscheme of $X$.
We define the Weil function for $Y$  as 
\begin{align*}
\lambda_{Y}=\min_i \{\lambda_{D_i}\},
\end{align*}
where $Y=\cap D_i$ (such $D_i$ exist according to the above lemma).
\end{definition}

As usual, the Weil function $\lambda_Y$ is well-defined up to $O(1)$.
%\begin{lemma}[ Lemma 2.5.2 in \cite{vojta_lect} or  Theorem 2.1 (h) in \cite{Sil} ]\label{new} Let $Y$ be a closed subscheme of $V$, and let ${\tilde V}$ be a blow up of $V$ along $Y$ with exceptional divisor $E=\pi^*Y$, then 
 %$\lambda_{Y,v}(\pi(P))=\lambda_{E, v}(P)+O_v(1)$ for $P\in {\tilde V}$.
% \end{lemma}
% Note that in the original statement of Lemma 2.5.2 in \cite{vojta_lect}, $V$ is assumed to be smooth, but from the proof it is easy to see that it works for a general projective variety from  Theorem 2.1 (h) in \cite{Sil}.
Let $f:\CC\to X$ be an analytic map.  The {\it proximity function} of $f$ with respect to $Y$ is defined by 
\begin{align*}
m_f(Y,r) = \int_0^{2\pi} \lambda_Y (f(re^{i\theta})) \frac{d \theta}{2\pi};
\end{align*}
the {\it counting function} of $f$ with respect to $Y$ is  defined by 
\begin{align*}
N_f(Y,r) = \int_0^r \dfrac{n_f(Y,t)-n_f(Y,0) }{t} dt+ n_f(Y,r)\log r,
\end{align*}
where $n_f(Y,t)$ is the minimum of the number of zeros of $\rho_i\circ f$, $1\le i\le m$ inside $\{|z|<t\}$, counting multiplicity, with $\rho_i$, $1\le i\le m$, being   local defining functions of $Y$.  We note that the definition of the counting function extends without difficulty to the case when $Y$ is a closed subscheme of a quasi-projective variety.
The {\it  characteristic function} of $f$ with respect to $Y$ is  defined by
\begin{align*}
T_{Y,f}(r) =m_f(Y,r)+N_f(Y,r) .
\end{align*}
The proximity functions and counting functions of closed subschemes of $X$ satisfy additivity and functoriality properties as in the classical setting of Cartier divisors. 

Recall the following definition which gives an analogue of the notion of gcd in the context of meromorphic functions.
Let $f$ and $g$ be meromorphic functions. We write
$$
\begin{aligned}
n(f,g,r)&:=\sum_{|z|\le r} \min\{v^+_z(f),v^+_z(g)\}
\end{aligned}
$$
and
$$
\begin{aligned}
N_{\gcd}(f,g,r)&:=\int_0^r\frac{n(f,g,t)-n(f,g,0)}{t}dt + n(f,g,0)\log r.
\end{aligned}
$$
Let $F, G\in\mathbb C[x_0, \cdots,x_n ]$ be coprime homogeneous polynomials and let 
$Y$ be the closed subscheme of $\PP^n$ defined by the ideal $I=(F, G)$.   
Let ${\mathbf f}=(f_0, \dots ,f_n): \CC \rightarrow \mathbb P^n(\CC)$, where $f_0,\hdots,f_n$ are entire functions with no common zeros.  The counting function of $Y$ agrees with the gcd counting function of $F({\mathbf f})$ and $G({\mathbf f})$, i.e.,
$$
 N_{\mathbf f}(Y, r)=N_{\gcd}(F({\mathbf f}),G({\mathbf f}),r)+O(1).
$$

We also define gcd proximity and characteristic functions, in analogy with \eqref{loggcd} and \eqref{loggcd2}.  Let
\begin{align*}
m_{\gcd}(f,g,r)&:=-\int_0^{2\pi}\log^-\max\{|f(re^{i\theta})|,|g(re^{i\theta})|\} \frac{d\theta}{2\pi},\\
T_{\gcd}(f,g,r)&:=T_{[1:f:g]}(r)-T_{[f:g]}(r).
\end{align*}

We have the expected relationship between $m_{\gcd},N_{\gcd},$ and $T_{\gcd}$.

\begin{lemma}
\label{gcdfirstmain}
Let $f$ and $g$ be meromorphic functions.  Then
\begin{align*}
T_{\gcd}(f,g,r)=m_{\gcd}(f,g,r)+N_{\gcd}(f,g,r)+O(1).
\end{align*}
\end{lemma}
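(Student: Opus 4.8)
The plan is to compute $T_{[1:f:g]}(r)$ and $T_{[f:g]}(r)$ separately, each via a convenient entire representation and Jensen's formula (Lemma~\ref{Jensen}), and then subtract. First I would write $f=f_1/f_0$ and $g=g_1/g_0$, where $f_0,f_1$ (resp.\ $g_0,g_1$) are entire with no common zeros; thus the zeros of $f_0$ are the poles of $f$ counted with multiplicity, and $\ord_z f_1=v_z^+(f)$ for every $z$ (and similarly for $g$). The basic tool is the elementary remark that if $H_0,\dots,H_m$ are entire, not all identically zero, and $c$ is their greatest common divisor (the entire function with $\ord_z c=\min_i\ord_z H_i$), then $(H_0/c,\dots,H_m/c)$ is a reduced representation of $\mathbf H=[H_0:\cdots:H_m]$, so by the definition of $T_{\mathbf H}$ via a reduced representation together with Jensen's formula applied to $c$ (which is entire), and using $\max_i|H_i|=|c|\cdot\max_i|H_i/c|$,
\[
\int_0^{2\pi}\log\max_{i}|H_i(re^{i\theta})|\,\frac{d\theta}{2\pi}=T_{\mathbf H}(r)+N_c(0,r)+O(1).
\]

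Next I would apply this to the entire representations $(f_0g_0,\,f_1g_0,\,f_0g_1)$ of $[1:f:g]$ and $(f_1g_0,\,f_0g_1)$ of $[f:g]$. Factoring $|f_0||g_0|$ out of each maximum (so that $\max\{|f_0g_0|,|f_1g_0|,|f_0g_1|\}=|f_0||g_0|\max\{1,|f|,|g|\}$ and $\max\{|f_1g_0|,|f_0g_1|\}=|f_0||g_0|\max\{|f|,|g|\}$) and applying Jensen's formula once more to $f_0$ and $g_0$, one obtains
\[
T_{[1:f:g]}(r)=N_{f_0}(0,r)+N_{g_0}(0,r)+\int_0^{2\pi}\log^+\!\max\{|f|,|g|\}\,\frac{d\theta}{2\pi}-N_{c}(0,r)+O(1),
\]
\[
T_{[f:g]}(r)=N_{f_0}(0,r)+N_{g_0}(0,r)+\int_0^{2\pi}\log\max\{|f|,|g|\}\,\frac{d\theta}{2\pi}-N_{c'}(0,r)+O(1),
\]
where $c=\gcd(f_0g_0,f_1g_0,f_0g_1)$ and $c'=\gcd(f_1g_0,f_0g_1)$.

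The crux is then a short local computation with orders of vanishing, using only the coprimality relations $\min\{\ord_z f_0,\ord_z f_1\}=\min\{\ord_z g_0,\ord_z g_1\}=0$: one checks that $c=\gcd(f_0,g_0)$ and $c'=\gcd(f_0,g_0)\cdot\gcd(f_1,g_1)$. (For $c$, note $\min\{\ord_z(f_0g_0),\ord_z(f_1g_0)\}=\ord_z g_0$, whence the gcd order at $z$ is $\min\{\ord_z g_0,\ord_z f_0+\ord_z g_1\}=\min\{\ord_z f_0,\ord_z g_0\}$ because $\ord_z g_0>0$ forces $\ord_z g_1=0$; for $c'$ one separates the points that are common zeros of $f,g$ from the common poles.) Consequently $N_{c'}(0,r)=N_c(0,r)+N_{\gcd(f_1,g_1)}(0,r)$, and since $\ord_z\gcd(f_1,g_1)=\min\{v_z^+(f),v_z^+(g)\}$, we get $N_{\gcd(f_1,g_1)}(0,r)=N_{\gcd}(f,g,r)$ straight from the definition.

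Finally, subtracting the two displayed identities, the terms $N_{f_0}(0,r)$, $N_{g_0}(0,r)$, $N_c(0,r)$ cancel, and since $\log^+x-\log x=-\log^-x$ the remaining integral is exactly $m_{\gcd}(f,g,r)$; this yields $T_{\gcd}(f,g,r)=m_{\gcd}(f,g,r)+N_{\gcd}(f,g,r)+O(1)$. The main obstacle — really the only nonformal point — is the local order bookkeeping identifying $c$ and $c'$, in particular verifying that a point which is a zero of one of $f,g$ and a pole of the other contributes nothing to either gcd; everything else is Jensen's formula and unwinding definitions. (In degenerate cases where $f$ or $g$ is constant or identically zero the same argument applies, and the identity reduces to the First Main Theorem.)
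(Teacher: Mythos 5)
Your proof is correct and follows essentially the same route as the paper: both compute $T_{[1:f:g]}(r)$ and $T_{[f:g]}(r)$ through entire representations, use Jensen's formula together with the identity $\log^+x-\log x=-\log^- x$ to produce $m_{\gcd}$, and identify the leftover counting term with $N_{\gcd}(f,g,r)$ by a local order comparison. The only difference is bookkeeping: the paper works with reduced representations directly (auxiliary functions $h_0$ and $h_1$, so that a single Jensen application to $h_1$ suffices and $N_{h_1}(0,r)=N_{\gcd}(f,g,r)$ is immediate), whereas you use coprime representations $f_1/f_0$, $g_1/g_0$ and correct the non-reduced representations by the gcds $c$ and $c'$, which you then compute explicitly — a slightly longer but equally valid verification.
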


\begin{proof}
Let $h_0$ be an entire function such that $(h_0,fh_0,gh_0)$ is a reduced representation of $[1:f:g]$, i.e., $h_0,fh_0,$ and $gh_0$ are entire and have no common zeros.  Similarly, let $h_1$ be an entire function such that $[fh_0/h_1,gh_0/h_1]$ is a reduced representation of $[f:g]$.  Then from the definitions,
\begin{multline*}
T_{\gcd}(f,g,r)= \int_0^{2\pi} \log\max\{|h_0(re^{i\theta})|,|(fh_0)(re^{i\theta})|,|(gh_0)(re^{i\theta})|\}|\frac{d\theta}{2\pi}\\
-\int_0^{2\pi} \log\max\{|(fh_0/h_1)(re^{i\theta})|,|(gh_0/h_1)(re^{i\theta})|\}\frac{d\theta}{2\pi}.
\end{multline*}
It is elementary that for any real numbers $a,b>0$,
\begin{align*}
\log\frac{\max\{1,a,b\}}{\max\{a,b\}}=-\log^-\max\{a,b\}.
\end{align*}
Then we find that
\begin{align*}
T_{\gcd}(f,g,r)&= -\int_0^{2\pi} \log^-\max\{|f(re^{i\theta})|,|g(re^{i\theta})|\}\frac{d\theta}{2\pi}+\int_0^{2\pi} |h_1(re^{i\theta})|\frac{d\theta}{2\pi}\\
&=m_{\gcd}(f,g,r)+N_{h_1}(0,r)+O(1),
\end{align*}
where the second line follows from the definition of $m_{\gcd}$ and Jensen's formula.  To complete the proof, we note that from its definition, one easily finds that $N_{h_1}(0,r)=N_{\gcd}(f,g,r)$.
\end{proof}

%%%%%%%%%%%%%%%%%%%%%%%%%%
\subsection{Polynomial rings and monomial orderings}\label{algebra}
Let $A=\CC[x_0,\hdots,x_n]$ be the polynomial ring in $n+1$ variables over $\CC$.  
For ${\bf i}=(i_0,\hdots,i_n)\in\mathbb N^{n+1}$, we define
$$
{\bf x}^{\bf i}= x_0^{i_1}\cdots x_n^{i_n},
$$
and write
$$ 
|{\bf i}|:=i_0+\cdots+  i_n=\deg  {\bf x}^{\bf i}.
$$
Let 
$$
{\rm Mon}={\rm Mon}(A)=\{{\bf x}^{\bf i}: {\bf i}\in{\mathbb N^{n+1}}\}
$$
be the set of monomials of $A$.  We note that we use the convention that ${\mathbb N}$ is the set of nonnegative integers.

Recall that a monomial ordering on $A$ is a total ordering $>$ on {\rm Mon} such that
\begin{enumerate}
\item If ${\bf x}^{\bf i}  > {\bf x}^{\bf j}$, then ${\bf x}^{\bf i} {\bf x}^{\bf k} > {\bf x}^{\bf j}{\bf x}^{\bf k}$ for all ${\bf i}$, ${\bf j}$, ${\bf k} \in {\mathbb N^{n+1}}$.
\item ${\bf x}^{\bf i}  \ge 1$ for all ${\bf i} \in {\mathbb N^{n+1}}$.
\end{enumerate}

We describe two monomial orderings that we will use.
The {\it lexicographic ordering} on $A$ is the monomial ordering $>_{\rm lex}$ such that  ${\bf x}^{\bf i}>_{\rm lex} {\bf x}^{\bf j}$ if the left-most non-zero entry of ${\bf i}-{\bf j}$ is positive.  Let ${\bf u}\in{\mathbb N^{n+1}}$.  We call ${\bf u}$ a {\it weight vector} and define the {\it weight order} associated to $\bf u$ as follows:
$$
{\bf x}^{\bf i}>_{\bf u} {\bf x}^{\bf j}\quad\text{ if } {\bf u}\cdot {\bf i}>{\bf u}\cdot {\bf j}, \text{ or } {\bf u}\cdot {\bf i}={\bf u}\cdot {\bf j} \text{ and }  {\bf x}^{\bf i}>_{\rm lex} {\bf x}^{\bf j}.
$$

If $>$ is a monomial ordering and $F\in \CC[x_0,\hdots,x_n]$ is a nonzero polynomial, 
%we let ${\rm LM}(F)$ denote the leading monomial of $F$, i.e., the largest monomial (with respect to $>$) appearing in $F$ with a nonzero coefficient. Similarly, 
we let ${\rm TM}(F)$ denote the trailing monomial of $F$ (the smallest monomial appearing in $F$ with a nonzero coefficient).  If $F$ and $G$ are nonzero polynomials, then 
%${\rm LM}(FG)={\rm LM}(F){\rm LM}(G)$ and 
${\rm TM}(FG)={\rm TM}(F){\rm TM}(G)$.  For every nonnegative integer $m$ and subset $T\subset \CC[x_0,\hdots,x_n]$, we let 
$$
T_m=\{P\in T \, |\,\text{$P$ is a homogeneous polynomial of degree $m$ } \}
$$
and let ${\rm TM}(T)=\{{\rm TM}(F): F\in T\}$.  We will use the following key lemma. 
 \begin{lemma}\label{mainlemma}
Let $F_1,F_2\in k[x_0,\hdots,x_n]$ be coprime homogeneous polynomials of degree $d$,
where $k$ is a field.
Let $I_1$, $I_2$, $I_3$ be the principal ideals $I_1=(F_1)$, $I_2=(F_2)$, $I_3=(F_1F_2)$.
Let $m\geq d$ and let $V=(F_1,F_2)_m:=k[x_0,\hdots,x_n]_m\cap (F_1,F_2)$, a   $k$-subspace of the ideal $(F_1,F_2)$.  
Let $V_j=(F_1,F_2)_m\cap I_j$, $j=1,2,3$. Let 
\begin{align*}
B_1&=\{F_1{{\bf x}^{\bf i}}:\, |{\bf i}|=m-d\},\\
B_2&=\{F_2{{\bf x}^{\bf i}}:\, |{\bf i}|=m-d\},\\
B'_1&=\{F_1  {\rm TM}(F_2){\bf x}^{\bf i}:\, |{\bf i}|=m-2d\}.
\end{align*}
Then 
$$
B=(B_1\setminus B_1')\cup B_2
$$
is a basis for $(F_1,F_2)_m$.
Moreover, 
$$
\sum_{s\in B_j} \ord_{x_i} \frac {s}{F_j}=\binom {m+n-d}{n+1}
$$
for $j=1,2$, and
$$
\sum_{s\in B'_1} \ord_{x_i} \frac {s}{F_1}=\binom {m+n-2d}{n+1}+\binom {m+n-2d}{n}\ord_{x_i} {\rm TM}(F_2).
$$
\end{lemma}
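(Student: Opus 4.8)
The plan is to prove the three assertions in order: first that $B$ is a basis, then the two Wronskian-type order sums. The basis statement is essentially a Gröbner-basis / leading-term argument. Consider the trailing-monomial map $\operatorname{TM}$ with respect to the weight or lexicographic ordering fixed earlier. Since $F_1, F_2$ are coprime, one knows $(F_1) \cap (F_2) = (F_1 F_2)$, so $\dim V = \dim V_1 + \dim V_2 - \dim V_3 = \binom{m-d+n}{n} + \binom{m-d+n}{n} - \binom{m-2d+n}{n}$ (using $m \geq d$; when $m < 2d$ the last term is $0$ and one treats $B_1'$ as empty). The sets $B_1$ and $B_2$ clearly span $V_1$ and $V_2$ and are bases of those spaces (multiplication by $F_j$ is injective). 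So it suffices to show $B = (B_1 \setminus B_1') \cup B_2$ is linearly independent and has the right cardinality $|B_1| - |B_1'| + |B_2| = \binom{m-d+n}{n} - \binom{m-2d+n}{n} + \binom{m-d+n}{n} = \dim V$. For linear independence, I would argue that $B_2$ is a basis of $V_2$, and that the images of $B_1 \setminus B_1'$ in the quotient $V/V_2 \cong V_1/(V_1 \cap V_2) = V_1/V_3$ are linearly independent. The point: $V_3 = F_1 \cdot (F_2) _{m-d}$ has basis $\{F_1 F_2 {\bf x}^{\bf i} : |{\bf i}| = m-2d\}$, whose trailing monomials are exactly $\operatorname{TM}(F_1)\operatorname{TM}(F_2){\bf x}^{\bf i}$; these are the trailing monomials of $B_1' / F_1 \cdot \operatorname{TM}(F_1)$... more cleanly: in $V_1 \cong k[x_0,\dots,x_n]_{m-d}$ via division by $F_1$, the subspace $V_3$ corresponds to $(F_2)_{m-d}$, and $B_1'/F_1 = \{\operatorname{TM}(F_2){\bf x}^{\bf i}\}$ is a set of monomials whose span is a monomial complement is not quite right — rather, $\{\operatorname{TM}(F_2){\bf x}^{\bf i} : |{\bf i}| = m-2d\} = \operatorname{TM}((F_2)_{m-d})$ is precisely the set of leading (trailing) monomials of the ideal slice, so the remaining monomials $\{{\bf x}^{\bf j} : |{\bf j}| = m-d\} \setminus \{\operatorname{TM}(F_2){\bf x}^{\bf i}\}$ form a basis of the quotient $k[x]_{m-d}/(F_2)_{m-d}$, and pulling back, $B_1 \setminus B_1'$ maps to a basis of $V_1/V_3$. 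Hence $B$ is a basis of $V$.

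For the order sums, fix a variable $x_i$ (the statement writes $\ord_{x_i}$; I read this as the sum, over $s$ in the given set, of the exponent of $x_i$ in the monomial $s/F_j$, which for $B_1, B_2$ means the exponent of $x_i$ in ${\bf x}^{\bf i}$, and for $B_1'$ means the exponent of $x_i$ in $\operatorname{TM}(F_2){\bf x}^{\bf i}$). The first computation is then purely combinatorial: $\sum_{|{\bf i}| = m-d} (\text{exponent of } x_i \text{ in } {\bf x}^{\bf i})$. By symmetry among the $n+1$ variables, this equals $\frac{1}{n+1}\sum_{|{\bf i}|=m-d}|{\bf i}| = \frac{m-d}{n+1}\binom{m-d+n}{n}$, and the identity $\frac{m-d}{n+1}\binom{m-d+n}{n} = \binom{m-d+n}{n+1}$ follows from the standard binomial manipulation $\binom{N}{n+1} = \frac{N-n}{n+1}\binom{N}{n}$ applied with $N = m-d+n$ (so $N - n = m-d$). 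This gives the claimed $\binom{m+n-d}{n+1}$ for both $j=1$ and $j=2$. For $B_1'$: the exponent of $x_i$ in $\operatorname{TM}(F_2){\bf x}^{\bf i}$ is $(\text{exp. of }x_i\text{ in }\operatorname{TM}(F_2)) + (\text{exp. of }x_i\text{ in }{\bf x}^{\bf i})$, and there are $\binom{m-2d+n}{n}$ monomials ${\bf x}^{\bf i}$ with $|{\bf i}| = m-2d$; summing gives $\binom{m-2d+n}{n}\ord_{x_i}\operatorname{TM}(F_2) + \binom{m+n-2d}{n+1}$ by the same combinatorial identity with $m-d$ replaced by $m-2d$, which is exactly the asserted formula.

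The main obstacle is the basis claim, specifically making the quotient argument airtight — that $\{\operatorname{TM}(F_2)\,{\bf x}^{\bf i} : |{\bf i}| = m-2d\}$ is precisely the set of trailing monomials occurring in the degree-$(m-d)$ part of the principal ideal $(F_2)$, so that its complement among degree-$(m-d)$ monomials maps bijectively to a basis of $k[x]_{m-d}/(F_2)_{m-d}$. This uses the multiplicativity $\operatorname{TM}(F_2 \cdot h) = \operatorname{TM}(F_2)\operatorname{TM}(h)$ noted in the excerpt, together with the fact that a homogeneous polynomial is determined modulo $(F_2)_{m-d}$ by a standard reduction against $F_2$ that strictly decreases the trailing monomial (running the division algorithm from the bottom up), so every class has a unique representative supported on monomials not divisible by $\operatorname{TM}(F_2)$; a dimension count then forces these to be a basis. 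Everything else is bookkeeping with binomial coefficients, and the degenerate ranges $d \leq m < 2d$ (where $B_1'$ and $V_3$ are empty and $\binom{m-2d+n}{n+1} = \binom{m-2d+n}{n} = 0$ with the usual conventions) are handled uniformly.
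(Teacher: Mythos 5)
Your proposal is correct and takes essentially the same route as the paper's proof: both rest on the dimension identity $\dim V=\dim V_1+\dim V_2-\dim V_3$ coming from coprimality ($V_1\cap V_2=V_3$), a trailing-monomial argument for the linear independence of $B$ (you phrase it via standard monomials in the quotient $V_1/V_3\cong V/V_2$, while the paper shows directly that the span of $B_1\setminus B_1'$ meets $V_2$ only in $0$ because a nonzero element of the intersection would have trailing monomial in ${\rm TM}(B_1')$ — the same idea), and the identical symmetry computation $\sum_{|{\bf i}|=m-d}\ord_{x_i}{\bf x}^{\bf i}=\frac{m-d}{n+1}\binom{m+n-d}{n}=\binom{m+n-d}{n+1}$ for the order sums. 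No gaps; your reading of $\ord_{x_i}$ and your treatment of the range $d\le m<2d$ are consistent with the intended statement.
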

\begin{proof}
Since $f_1$ and $f_2$ are coprime, we have $I_1\cap I_2=I_3$ and $V_1\cap V_2=V_3$.  For any finite-dimensional subspace $W\subset k[x_0,\ldots, x_n]$, it is easy to see that $\# {\rm TM}(W)=\dim W$.  Since $V=V_1+V_2$ and $\dim V=\dim V_1+\dim V_2-\dim V_3$, we have $\dim V=\#B_1+\#B_2-\#B_1'$.  Let $V_1'$ be the span of $B_1\setminus B_1'$.  Then $ {\rm TM}(V_1')= {\rm TM}(B_1\setminus B_1')= {\rm TM}(B_1)\setminus  {\rm TM}(B_1')$.  If $p\in V_1'\cap V_2$, $p\neq 0$, then $p\in V_3$ and $ {\rm TM}(p)\in {\rm TM}(V_3)= {\rm TM}(B_1')$, a contradiction.  So $V_1'\cap V_2=0$.  Since $\dim V_1'+\dim V_2=\#B_1-\#B_1'+\#B_2=\dim V$ as well, it follows that $B=(B_1\setminus B_1')\cup B_2$ is a basis for $V$.

%See from \cite[Lemma 2.1,and  Lemma 3.2]{levin2017}.

As is well-known, the number of monomials of degree $\delta$ in $x_0,\ldots, x_n$ is $\binom{n+\delta}{n}$.  
Then
\begin{align*}
\sum_{|{\bf i}|=m-d} \sum_{i=0}^n\ord_{x_i}{\bf x}^{\bf i}=\sum_{i=0}^n\sum_{|{\bf i}|=m-d} \ord_{x_i}{\bf x}^{\bf i}=(m-d)\binom{m+n-d}{n}.
\end{align*}
By symmetry, we have
\begin{align*}
\sum_{|{\bf i}|=m-d} \ord_{x_i}{\bf x}^{\bf i}=\frac{m-d}{n+1}\binom{m+n-d}{n}=\binom {m+n-d}{n+1}
\end{align*}
for $i=0,\ldots, n$. This yields the first summation formula.  The second summation formula follows similarly.
\end{proof}

%The identical result with ${\rm LM}$ replaced by ${\rm TM}$ everywhere holds by the same proof (with the obvious modifications).

 %%%%%%%%%%%%%%%%%%%%%%%%%%
%%%%%%%%%%%%%%%%%%%%%%%%%%
\section{Key Theorem}
\label{sKey}
In this section, we prove a fundamental result underlying the proofs of our main theorems.
%\begin{theorem}\label{fundamental}
%Let $F, G\in\mathbb C[x_0, \cdots,x_n ]$ be coprime homogeneous polynomials of the same degree $d>0$, and $(F,G)$ be the ideal generated by $F$ and $G$ in $\mathbb C[x_0, \cdots,x_n ]$. 
%Let $m\ge d$ be a positive integer. Let $\{\phi_1,\hdots,\phi_M\}$ be a basis of the $\CC$-vector space $(F,G)_m$, where $M=\dim (F,G)_m$.  Let $g_0,g_1,\dots,g_n $ be entire functions without common zeros.   Assume that the set $\{g_0^{i_0}\dots g_n^{i_n}: i_0+\cdots+i_n=m\}$ is linearly independent over $\CC$.   Let $\Psi=(\psi_1,\hdots,\psi_M)$ be a reduced form of the map $\Phi({\bf g}):=[\phi_1({\bf g}):\cdots:\phi_M({\bf g})]$, where ${\bf g}=(g_0,g_1,\dots,g_n):\CC \to \PP^n$, and $W(\Psi)$ be the Wronskian of $\psi_1,\hdots,\psi_M$.
%Then for any $\epsilon>0$, 
%\begin{align*}
%&MN_{\rm gcd}(F({\bf g}),G({\bf g}),r) \cr
% &\le_{\rm exc}\left(\frac{m}{n+1}\binom{m+n}{n}-M'm\right)\sum_{i=0}^nN_{g_i}(0,r)-N_{W(\Psi)}(0,r)+(M'mn+\epsilon m ) T_{\bf g}(r)+O(1),
%\end{align*}
%where  $M'$ is an integer of order $O(m^{n-2})$.
%\end{theorem}

\begin{theorem}\label{Refinement}
Let $F, G\in\mathbb C[x_0, \cdots,x_n ]$ be coprime homogeneous polynomials of the same degree $d>0$.  Let $I$ be the set of exponents ${\bf i}$ such that ${\bf x}^{\bf i}$ appears with a nonzero coefficient in either $F$ or $G$.  Let $m\ge d$ be a positive integer.   Let $g_0,g_1,\dots,g_n $ be entire functions without common zeros such that the set $\{g_0^{i_0}\cdots g_n^{i_n}: i_0+\cdots+i_n=m\}$ is linearly independent over $\CC$.  Then for any $\epsilon>0$, there exists a positive integer $L$ such that the following holds:
\begin{align*}
& MN_{\rm gcd}(F({\bf g}),G({\bf g}),r) \\
&\le_{\rm exc} c_{m,n,d}  \sum_{i=0}^n N^{(L)}_{ g_i}(0,r)+ \left(\frac{m}{n+1}\binom{m+n}{n}-c_{m,n,d}-M'm\right)\sum_{i=0}^nN_{g_i}(0,r)\\ 
  &+\binom {m+n-2d}{n}N_{\rm gcd}(\{{\bf g}^{\bf i}\}_{{\bf i}\in I},r)+ (M'mn+\epsilon m )T_{\bf g}(r)+O(1),
\end{align*}
where ${\bf g}=(g_0,g_1,\dots,g_n):\CC \to \PP^n$, $c_{m,n,d}=2\binom {m+n-d}{n+1}- \binom {m+n-2d}{n+1}$,
$M=M_{m,n,d}=2\binom {m+n-d}{n}- \binom {m+n-2d}{n}$, and $M'$ is an integer of order $O(m^{n-2})$.
\end{theorem}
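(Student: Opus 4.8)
The plan is to transplant the Diophantine argument of \cite{levin2017} to Nevanlinna theory, replacing Schmidt's subspace theorem by Cartan's second main theorem in the form of Theorem~\ref{gsmt} (Vojta's version, carrying a Wronskian term), and then to estimate that Wronskian term — which is stronger than anything available in the arithmetic setting — so as to produce the truncated counting functions $N^{(L)}_{g_i}(0,r)$ and the secondary error terms controlled by $M'$. First I would pass to a nondegenerate curve: put $N+1=\binom{m+n}{n}$ and let $\psi\colon\CC\to\PP^N$ have reduced representation the tuple $({\bf g}^{\bf i})_{|{\bf i}|=m}$ of degree-$m$ monomials in ${\bf g}$. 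This is a reduced representation because $g_0,\dots,g_n$ have no common zero, and since $\max_{|{\bf i}|=m}|{\bf g}^{\bf i}(z)|=\|{\bf g}(z)\|^m$ we get $T_\psi(r)=mT_{\bf g}(r)$; the hypothesis that $\{{\bf g}^{\bf i}:|{\bf i}|=m\}$ is linearly independent over $\CC$ says precisely that $\psi$ is linearly nondegenerate, so Theorem~\ref{gsmt} is available for $\psi$ and any finite family of hyperplanes of $\PP^N$.

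Next I would fix a weight monomial order for which Lemma~\ref{mainlemma} applies to $F,G$ in degree $m$, giving the basis $B=(B_1\setminus B_1')\cup B_2$ of $V:=(F,G)_m$, and complete it to a basis $\mathcal B$ of $\CC[x_0,\dots,x_n]_m$ by adjoining the $N+1-M$ monomials ${\bf x}^{\bf i}$, $|{\bf i}|=m$, outside $\LM(V)$. Apply Theorem~\ref{gsmt} to $\psi$ and the family of hyperplanes attached to $\mathcal B$ together with the discarded forms $B_1'$ (so the family is not in general position, and the maximum over general-position subsets in Theorem~\ref{gsmt} does genuine work). The features used are: every $s\in V$ satisfies $v_z(s({\bf g}))\ge\min\{v_z^+(F({\bf g})),v_z^+(G({\bf g}))\}$, so the $M$ hyperplanes coming from $V$ detect each common zero of $F({\bf g})$ and $G({\bf g})$ with full multiplicity; every monomial Weil function of $\psi$ is bounded below; and the factorisations $s({\bf g})=F({\bf g}){\bf g}^{\bf j}$, $G({\bf g}){\bf g}^{\bf j}$, ${\bf g}^{\bf j}$ reduce every $N_\psi(H_s,r)$ to $N_{F({\bf g})}(0,r)$ or $N_{G({\bf g})}(0,r)$ plus a combination of the $N_{g_i}(0,r)$ governed by the summation formulas of Lemma~\ref{mainlemma} (notably $\sum_{|{\bf i}|=m}i_i=\binom{m+n}{n+1}$ and the formulas for $B_1,B_2,B_1'$). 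Combining these with the first main theorem ($m_\psi(H,r)=T_\psi(r)-N_\psi(H,r)+O(1)$), with $T_\psi(r)=mT_{\bf g}(r)$, and with $N_{F({\bf g})}(0,r),\,N_{G({\bf g})}(0,r)\ge N_{\rm gcd}(F({\bf g}),G({\bf g}),r)$, Theorem~\ref{gsmt} yields, after the $(N+1)mT_{\bf g}(r)$-terms cancel and the $o(T_{\bf g}(r))$-error is absorbed into $\epsilon m\,T_{\bf g}(r)$, an inequality of the form
\[
MN_{\rm gcd}(F({\bf g}),G({\bf g}),r)+N_{W(\psi)}(0,r)\ \le_{\rm exc}\ \binom{m+n}{n+1}\sum_{i=0}^n N_{g_i}(0,r)+\binom{m+n-2d}{n}N_{\rm gcd}(\{{\bf g}^{\bf i}\}_{{\bf i}\in I},r)+\epsilon m\,T_{\bf g}(r)+O(1),
\]
where the term $\binom{m+n-2d}{n}N_{\rm gcd}(\{{\bf g}^{\bf i}\}_{{\bf i}\in I},r)$ accounts for the discrepancy produced by the discarded forms $B_1'$ and by the leading monomial of $G$.

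It then remains to bound $N_{W(\psi)}(0,r)$ from below; equivalently, to prove
\[
\binom{m+n}{n+1}\sum_{i=0}^n N_{g_i}(0,r)-N_{W(\psi)}(0,r)\ \le_{\rm exc}\ c_{m,n,d}\sum_{i=0}^n N^{(L)}_{g_i}(0,r)+\Big(\binom{m+n}{n+1}-c_{m,n,d}-M'm\Big)\sum_{i=0}^n N_{g_i}(0,r)+M'mn\,T_{\bf g}(r)+O(1)
\]
for a suitable integer $L=L(m,n,d)$; adding this to the inequality just obtained gives the theorem. Since $W(\psi)$ is, up to a nonzero constant, the Wronskian of any basis of $\CC[x_0,\dots,x_n]_m$ evaluated at ${\bf g}$, one analyses its order at a zero $z_0$ of $g_i$ of order $e$ using the monomial basis: the $k$th derivative of ${\bf g}^{\bf i}$ vanishes to order $\ge\max\{0,ei_i-k\}$ at $z_0$, so the usual lower bound for the order of a Wronskian determinant gives $v_{z_0}(W(\psi))\ge e\binom{m+n}{n+1}-R(e)$ with an explicit combinatorial defect $R(e)$. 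The part of $\sum_{z_0}R$ proportional to the truncated multiplicity $\min\{e,L\}$ gives the term $c_{m,n,d}\sum_i N^{(L)}_{g_i}(0,r)$; the part linear in $e$ of order $O(m^{n-1})$ gives the coefficient $\binom{m+n}{n+1}-c_{m,n,d}-M'm$ of $\sum_i N_{g_i}(0,r)$ (the precise splitting coming again from Lemma~\ref{mainlemma} applied to the graded pieces); and the contribution of the genuine Wronskian $W(g_0,\dots,g_n)$, together with the linear combinations needed to ``separate'' monomials of equal order at $z_0$ — which carry zeros away from the $g_i$ — is absorbed into $M'mn\,T_{\bf g}(r)$ via a second main theorem for ${\bf g}$ itself.

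The principal obstacle I expect is the last step: controlling the Wronskian of the degree-$m$ Veronese of ${\bf g}$ precisely enough to see the exact constants $c_{m,n,d}$, $M$, and $M'=O(m^{n-2})$, rather than just their orders of magnitude in $m$. A secondary difficulty is the bookkeeping in the application of Theorem~\ref{gsmt}: checking that the general-position subsets chosen pointwise are admissible, that the ``$\LM(G)$'' and ``$B_1'$'' discrepancies really are dominated by $\binom{m+n-2d}{n}N_{\rm gcd}(\{{\bf g}^{\bf i}\}_{{\bf i}\in I},r)$, and that the identities of Lemma~\ref{mainlemma} assemble the coefficients $M$ and $\binom{m+n}{n+1}$ exactly as stated.
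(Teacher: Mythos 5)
There is a genuine gap, and it is in the first (and central) step: the intermediate inequality you claim to extract from Theorem \ref{gsmt} applied to the degree-$m$ Veronese map $\psi=({\bf g}^{\bf i})_{|{\bf i}|=m}$ cannot be obtained in the way you describe, because the gcd ends up on the wrong side. Since the monomials ${\bf g}^{\bf i}$ have no common zeros, $\psi$ is a \emph{reduced} representation, so $T_\psi(r)$ carries no record of the common zeros of $F({\bf g})$ and $G({\bf g})$; those zeros appear only in the counting functions $N_\psi(H_s,r)$ of the hyperplanes $H_s$ attached to $s\in (F,G)_m$. But the left-hand side of Theorem \ref{gsmt} consists of proximity (Weil-function) terms plus $N_{W(\psi)}(0,r)$, so after converting with the first main theorem your family of hyperplanes yields an inequality of the shape $N_{W(\psi)}(0,r)\le_{\rm exc}\sum_{s\in J}N_\psi(H_s,r)+\epsilon T_\psi(r)+o(T_\psi(r))$, in which $N_{F({\bf g})}(0,r)$ and $N_{G({\bf g})}(0,r)$, hence $N_{\rm gcd}(F({\bf g}),G({\bf g}),r)$, occur on the \emph{right} with a positive sign; the observation $N_{F({\bf g})}(0,r),\,N_{G({\bf g})}(0,r)\ge N_{\rm gcd}(F({\bf g}),G({\bf g}),r)$ only enlarges that right-hand side and cannot manufacture the term $MN_{\rm gcd}(F({\bf g}),G({\bf g}),r)$ on the left. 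In the paper this term is produced by a different mechanism: Theorem \ref{gsmt} is applied not to the Veronese but to the map $\Phi({\bf g})=(\phi_1({\bf g}),\dots,\phi_M({\bf g}))$ into $\PP^{M-1}$, where $\{\phi_j\}$ is a basis of $(F,G)_m$, whose components all share the common factor $h=\gcd(F({\bf g}),G({\bf g}))$; the reduced form is $\Psi=\Phi({\bf g})/h$, and the identity $\int_0^{2\pi}\log\|\Phi({\bf g}(re^{i\theta}))\|\frac{d\theta}{2\pi}=T_{\Psi}(r)+N_{\rm gcd}(F({\bf g}),G({\bf g}),r)+O(1)$, used once for each of the $M$ linear forms, is exactly what places $MN_{\rm gcd}$ on the left against the $(M+\epsilon)T_{\Psi}(r)$ coming from the second main theorem. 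Equally essential, and absent from your sketch, is the pointwise choice of linear forms: for each $z$ one picks monomial representatives of $\CC[x_0,\dots,x_n]_m/(F,G)_m$ with $|{\bf g}(z)^{\bf i}|$ minimal, which gives the key estimate $\log|L_{z,{\bf i}}(\Phi({\bf g}(z)))|\le\log|{\bf g}(z)^{\bf i}|+O(1)$; without it the proximity terms cannot be compared to monomials at all.

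The second half of your plan inherits the problem. The Wronskian that must be bounded from below is not $W(\psi)$ but $W(\Psi)$, the Wronskian of the functions $\phi_j({\bf g})/h$: precisely because the gcd $h$ has been divided out, the naive lower bound fails, and one needs Lemma \ref{mainlemma} with the weight order attached to $(v_z(g_0),\dots,v_z(g_n))$ at each point $z$; that computation is what produces the constant $c_{m,n,d}$, the correction term $\binom{m+n-2d}{n}N_{\rm gcd}(\{{\bf g}^{\bf i}\}_{{\bf i}\in I},r)$, and the truncation level $L=\frac{M(M-1)}{2c_{m,n,d}}$. For the Veronese Wronskian the analogous bound is the trivial one, $v_z^+(W(\psi))\ge\binom{m+n}{n+1}\sum_{i=0}^n v_z^+(g_i)-\frac12\binom{m+n}{n}\left(\binom{m+n}{n}-1\right)$, which would indeed make your second step easy --- but only because your unproved first inequality already contains essentially the whole content of the theorem. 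As it stands the proposal is not a proof; the missing ideas are the non-reduced representation built from a basis of $(F,G)_m$ and the two pointwise-adapted bases (minimal monomials for the proximity estimate, and the weight-order bases of Lemma \ref{mainlemma} for the Wronskian estimate).
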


\begin{proof}[Proof of Theorem \ref{Refinement}]
Let $\{\phi_1,\hdots,\phi_M\}$ be a basis of the $\CC$-vector space $(F,G)_m$, where $M=\dim (F,G)_m$. For each $z\in \CC$, we construct a basis $B_z$ for $V_m=\CC[x_0,\hdots,x_n]_m/(F,G)_m$ as follows.  
For ${\bf i}=(i_0,\cdots,i_n)\in\mathbb{N}^{n+1}$, we let ${\bf g}(z) ^{\bf i}   =g_0(z)^{i_0}\cdots g_n(z)^{i_n}$.
Choose a monomial ${\bf x}^{{\bf i}_1}\in \CC[x_0,\hdots,x_n]_m$   so that    $ |{\bf g}(z)^{{\bf i}_1}| $ is minimal subject to the condition ${\bf x}^{{\bf i}_1}\notin (F,G).$  Suppose now that ${\bf x}^{{\bf i}_1},\hdots,{\bf x}^{{\bf i}_j}$ have been constructed and are linearly independent modulo $(F,G)_m$, but don't span $\CC[x_0,\hdots,x_n]_m$ modulo $(F,G)_m$.  Then we let ${\bf x}^{{\bf i}_{j+1}}\in \CC[x_0,\hdots,x_n]_m$ be a monomial such that $|{\bf g}(z) ^{{\bf i}_{j+1}}|$ is minimal subject to the condition that 
${\bf x}^{{\bf i}_1},\hdots,{\bf x}^{{\bf i}_{j+1}}$ are linearly independent modulo $(F,G)_m$.  In this way, we construct a basis of $V_m$ with monomial representatives ${\bf x}^{{\bf i}_1},\hdots,{\bf x}^{{\bf i}_{M'}}$, where $M'=\dim V_m$.
Let $I_z=\{ {\bf i}_1,\hdots,{\bf i}_{M'}\}$.  Then for each ${\bf i}$, $|{\bf i}|= m$, we have
$$
{\bf x}^{\bf i}+\sum_{j=1}^{M'}c_{{\bf i},j}{\bf x}^{{\bf i}_j}\in (F,G)_m
$$
for some choice of coefficients $c_{{\bf i},j}\in\CC$.  Then for each such ${\bf i}$ there is a linear forms $L_{z,{\bf i}}$ over $\CC$ such that 
$$
L_{z,{\bf i}}(\phi_1,\hdots,\phi_M)={\bf x}^{\bf i}+\sum_{j=1}^{M'}c_{{\bf i},j}{\bf x}^{{\bf i}_j}.
$$
We note that since there are only finitely many choices of a monomial basis of $V_m$, there are only finitely many choices of $c_{{\bf i},j}$, even as $z$ runs through all of $\CC$. 
Note also that $\{L_{z,{\bf i}}(\phi_1,\hdots,\phi_M) \,|\, |{\bf i}|= m, {\bf i}\notin I_z\}$ is a basis for $(F,G)_m$.
From the definition of ${\bf x}^{{\bf i}_1},\hdots,{\bf x}^{{\bf i}_{M'}}$, we have the key inequality
\begin{align}\label{keyinequality2}
\log |L_{z,{\bf i}}(\Phi ({\bf g}(z))|\le \log |{\bf g}(z)^{{\bf i} }|+C,
\end{align}
where $\Phi ({\bf g}(z)) =(\phi_1({\bf g}(z)),\hdots,\phi_M({\bf g}(z)))$, and the constant $C$ is independent of $z$ as the choice of $c_{{\bf i},j}$ is finite.
The  map $ (\phi_1({\bf g} ),\hdots,\phi_M({\bf g} ) )$ may not be a reduced presentation of $\Phi ({\bf g} )$.  Let $h$ be an entire function such that $F({\bf g} )/h$ and $G({\bf g})/h$ are entire and have no common zeros, i.e., $h$ is a gcd of $F({\bf g} ) $ and $G({\bf g} )$.  Let $\psi_i:=\phi_i({\bf g})/h $, $1\le i\le M$.  As  $\phi_i\in (F,G)$,  the function $\psi_i$ is entire for $1\le i\le M$.  Moreover, since $FX_i^{m-d}, GX_i^{m-d}\in (F,G)_m$, $0\leq i\leq n$, and $g_0,\ldots, g_n$ have no common zero, the functions $\psi_i$, $1\le i\le M$, have no common zero.  Hence  $\Psi:=(\psi_1 ,\hdots,\psi_M )$ is a reduced form of $\Phi ({\bf g})$.

Applying Theorem \ref{gsmt}, the second main theorem, to the map $\Psi ({\bf g}(z))$ with the choice of linear forms $L_{z,{\bf i}}$, $|{\bf i}|= m$, ${\bf i}\notin I_z$, $z=re^{i\theta}\in \mathbb{C}$, we get for any $\epsilon>0$,
\begin{align}\label{useSMT2}
&\int_0^{2\pi}   \sum_{|{\bf i}|= m, {\bf i}\notin I_z}(-\log |L_{z,{\bf i}}(\Phi ({\bf g}(re^{i\theta}))|+\log\|\Phi ({\bf g}(re^{i\theta}))\|  )\frac{d\theta}{2\pi} +N_{W(\Psi)}(0,r)\cr
&\le_{\rm exc} (M+\epsilon) T_{\Phi ({\bf g)}}(r).
\end{align}
 
 Next, we will derive a lower bound for the left hand side of (\ref{useSMT2}).  By definition of the characteristic function, Lemma \ref{Jensen} and the choice of $h$, we have
\begin{align*} 
\int_0^{2\pi}   \log\|\Phi ({\bf g}(re^{i\theta})) )\|  \frac{d\theta}{2\pi} 
&= \int_0^{2\pi}\log \max\{|\psi_1(re^{i\theta}),\hdots, \psi_M(re^{i\theta})|\}+\log| h(re^{i\theta})|\frac{d\theta}{2\pi}\cr
&= T_{\Phi ({\bf g)}}(r)+ N_{\rm gcd}(F({\bf g}),G({\bf g}),r)+O(1).
\end{align*}
On the other hand, since $\phi_i\in \CC[x_0,\hdots,x_n]_m$,
$$
\log|\phi_i({\bf g}(z))|\le m\log\max\{|g_0(z)|,\hdots, |g_n(z)|\}+O(1),
$$
and hence
\begin{align*}
 \int_0^{2\pi}  \log\|\Phi ({\bf g}(re^{i\theta}))\|  \frac{d\theta}{2\pi}
 \le m  T_{\bf g}(r)+O(1).
\end{align*}
In conclusion, we have
\begin{align}\label{height}
N_{\rm gcd}(F({\bf g}),G({\bf g}),r)+T_{\Phi ({\bf g)}}(r)=\int_0^{2\pi}   \log\|\Phi ({\bf g}(re^{i\theta}))\|  \frac{d\theta}{2\pi}+O(1)  \le m T_{\bf g}(r)+O(1).
\end{align} 
 
To estimate the first term of (\ref{useSMT2}), we use the key inequality (\ref{keyinequality2}) to derive
\begin{align}\label{estimateL}
  -\sum_{|{\bf i}|= m, {\bf i}\notin I_z}& \log |L_{z,{\bf i}}(\Phi ({\bf g}(z) )| +c
   \ge  -\sum_{|{\bf i}|= m, {\bf i}\notin I_z} \log | {\bf g}(z)^{{\bf i} }|\cr
  &\ge   -\sum_{|{\bf i}|= m } \log | {\bf g}(z)^{{\bf i} }|+   \sum_{|{\bf i}|= m, {\bf i}\in I_z} \log | {\bf g}(z)^{{\bf i} }|\cr
  &\ge -\sum_{|{\bf i}|= m } \log | {\bf g}(z)^{{\bf i} }|+  M'm\log\min\{ | g_0(z)|,\hdots, | g_n(z)|\}%\cr
%  &=-\sum_{|{\bf i}|= m } \log | {\bf g}(z)^{{\bf i} }|-  M'm\log\max\left\{ \frac 1{| g_0(z)|},\hdots, \frac1{ | g_n(z)|}\right\},
   \end{align}
   where $c$ is a constant independent of $z$.
  Note  that
\begin{align*}
\sum_{|{\bf i}|= m}\log|{\bf g}(z)^{\bf i}|=\frac{m}{n+1}\binom{m+n}{n}\sum_{i=0}^n\log|g_i(z)|,
\end{align*}
and 
\begin{align*}
\log\min\{ | g_0(z)|,\hdots, | g_n(z)|\}
%&=\sum_{i=0}^n\log |g_i(z)|-\log\max\{ \frac {\prod_{i=0}^n|g_i(z)|}{| g_0(z)|},\hdots,\frac {\prod_{i=0}^n|g_i(z)|}{ | g_n(z)|}\}\\
&\ge \sum_{i=0}^n\log |g_i(z)|-n\max\{\log|g_0(z)|,\hdots,\log|g_n(z)|\}.
\end{align*}
By Lemma \ref{Jensen}, Jensen's formula, the integration of (\ref{estimateL}) from 0 to $2\pi$ over 
$d\theta$ gives
 \begin{align}\label{Li2}
\int_0^{2\pi}  &\sum_{|{\bf i}|= m, {\bf i}\notin I_z} -\log |L_{z,{\bf i}}(\Phi ({\bf g}(re^{i\theta}) ))|  \frac{d\theta}{2\pi}\cr
&\ge -\left(\frac{m}{n+1}\binom{m+n}{n}-M'm\right)\sum_{i=0}^nN_{g_i}(0,r)-M'mnT_{\bf g}(r)+O(1).
\end{align}

It then follows from  (\ref{useSMT2}), (\ref{height}), and (\ref{Li2}) that
\begin{align}\label{maininequality}
&MN_{\rm gcd}(F({\bf g}),G({\bf g}),r) \cr
 &\le_{\rm exc} \left(\frac{m}{n+1}\binom{m+n}{n}-M'm\right)\sum_{i=0}^nN_{g_i}(0,r)-N_{W(\Psi)}(0,r)+(M'mn+\epsilon m) T_{\bf g}(r)+O(1).
\end{align} 

By direction calculation, we find that $M'=\binom{m+n}{n}-M=O(m^{n-2})$.  Alternatively, since $F$ and $G$ are coprime, the ideal $(F,G)$ defines a closed subset of $\mathbb P^n$ of codimension at least 2, and it follows from the theory of Hilbert functions and Hilbert polynomials that $M'=O(m^{n-2})$.

It's clear from (\ref{maininequality}) that it suffices to show that there exists a large integer $L$ (to be determined later) such that 
  \begin{equation}
    \label{truncation2}
        \begin{aligned}
       c_{m,n,d}\sum_{i=0}^n N_{g_i}(0,r) -\binom {m+n-2d}{n}N_{\rm gcd}(\{{\bf g}^{\bf i}\}_{{\bf i}\in I},r)-N_{W(\Psi)}(0,r)\le  c_{m,n,d}\sum_{i=0}^n N^{(L)}_{g_i}(0,r).
        \end{aligned}
    \end{equation} 
The above inequality can be deduced from the inequality
  \begin{equation}
    \label{localeq2}
        \begin{aligned}
       c_{m,n,d}\sum_{i=0}^n  v_{z}^+ ( g_i)- \binom {m+n-2d}{n}\min_{{\bf i}\in I}v_z^+({\bf g}^{\bf i}) -v_{z}^+ ( W(\Psi))\le  c_{m,n,d}\sum_{i=0}^n \min\{L,v_{z}^+  (g_i)\},
        \end{aligned}
    \end{equation} 
 for all $z\in\CC$.  The inequality holds trivially if $v_{z}^+(g_i)\le  L$ for each $0\le i\le n$.
Therefore, we only need to consider the case where $v_{z}^+(g_i)> L$ for some $0\le i\le n$.

For $z\in\CC$, we define a monomial ordering $>_{{\bf g}(z)}$ on $A=\CC[x_0,\cdots,x_n]$ using the weight vector ${\bf u}=(v_z(g_0),\ldots, v_z(g_n))$.  Let
  \begin{align*}
B_1&=\{F_1{{\bf x}^{\bf i}}:\, |{\bf i}|=m-d\},\\
B_2&=\{F_2{{\bf x}^{\bf i}}:\, |{\bf i}|=m-d\},\\
B'_1&=\{F_1{\rm TM}_{{\bf g}(z)}(F_2){\bf x}^{\bf i}:\, |{\bf i}|=m-2d\},
\end{align*}
where $\{F_1,F_2\}=\{F,G\}$ and ${\rm TM}_{{\bf g}(z)}(F_2)\le {\rm TM}_{{\bf g}(z)}(F_1)$.
By Lemma \ref{mainlemma} 
%(and the comment following it),
$$
B=(B_1\setminus B_1')\cup B_2
$$
is a basis for $(F,G)_m$. 
Write $B=\{\beta_1,\hdots,\beta_M\}$.
Let $\eta_j= \beta_j({\bf g})/h$ (note that the $\beta_j$ depend on $z$.)  From the definition of $>_{{\bf g}(z)}$ and $F_2$, it follows that 
\begin{align*}
v_z^+({\rm TM}_{{\bf g}(z)}(F_2))=\min_{{\bf i}\in I}v_z^+({\bf g}^{\bf i}),
\end{align*}
where $I$ is the set of exponents ${\bf i}$ such that ${\bf x}^{\bf i}$ appears with a nonzero coefficient in either $F$ or $G$.
Then by the second part of Lemma \ref{mainlemma}, we have  for each $z\in \CC$,
\begin{align}\label{BS} 
\sum_{j=1}^M  v_{z}^+ (\eta_j)  
&\geq  \sum_{i=0}^n \left(
\sum_{s\in B_1} \ord_{x_i} \frac {s}{F_1}+\sum_{s\in B_2} \ord_{x_i} \frac {s}{F_2}-\sum_{s\in B'_1} \ord_{x_i} \frac {s}{F_1}\right)
\cdot v_{z}^+ (g_i)  \cr
&\geq  c_{m,n,d}\sum_{i=0}^{n}v_{z}^+ (g_i)-\binom {m+n-2d}{n}\min_{{\bf i}\in I}v_z^+({\bf g}^{\bf i}),
\end{align} 
where 
$$
c_{m,n,d}=2\binom {m+n-d}{n+1}- \binom {m+n-2d}{n+1}.
$$ 
On the other hand, from the basic properties of Wronskians, we have
 \begin{equation}
    \label{ordW2}
        \begin{aligned}
     v_{z}^+(W(\Psi))&\ge \sum_{j=1}^M  v_{z}^+ (\eta_j)  -\frac12M(M-1).        
      \end{aligned}
 \end{equation}  
Combining (\ref{BS}) and (\ref{ordW2}), we obtain that
\begin{equation*}
%    \label{local2}
        \begin{aligned}
       c_{m,n,d}\sum_{i=0}^{n}v_{z}^+ (g_i)-\binom {m+n-2d}{n}\min_{{\bf i}\in I}v_z^+({\bf g}^{\bf i})- v_{z}^+(W(\Psi))
        \le  \frac 12  M(M-1) .        
        \end{aligned}
    \end{equation*}
Let $L=\frac 12  M(M-1)c_{m,n,d}^{-1} $.   
The assumption that $v_{z}^+(g_i)> L$ for some $0\le i\le n$ implies that 
\begin{align*}%\label{L2}
\frac 12  M(M-1)=c_{m,n,d}L\le c_{m,n,d}\sum_{i=0}^n \min\{L,v_{z}^+ (g_i)\}.
\end{align*}
\end{proof}

%%%%%%%%%%%%%%%%%%%%%%
 
\section{Proof of the Main Theorems}
\label{sMain}
We  recall Borel's lemma. (See \cite[Theorem A.3.3.2]{ru2001nevanlinna}).
\begin{lemma}[Borel's Lemma]\label{Borel}
Let $f_0 , \hdots,  f_{n+1}$ be entire functions without zeros, satisfying
$$
f_0+\hdots+f_n+f_{n+1}=0.
$$
Define an equivalence relation $i\sim j$ if $f_i/f_j$ is constant.  Then for each equivalence class $S$ we have 
$$
\sum_{i\in S}f_i=0.
$$  
\end{lemma}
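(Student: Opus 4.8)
The plan is to prove Borel's Lemma by induction on $N=n+2$, the number of functions, using Cartan's second main theorem with truncated counting functions (Theorem~\ref{tsmt}) as the sole analytic ingredient. For $N=1$ the hypothesis forces $f_0=0$, which is impossible, and for $N=2$ we have $f_0=-f_1$, so $\{0,1\}$ is the unique equivalence class and it sums to zero. So assume $N\ge 3$ (hence $n\ge 1$) and that the lemma is known for every smaller number of functions.

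First I would associate to the situation the holomorphic curve $\phi=[f_1:\cdots:f_{n+1}]:\CC\to\PP^n$, which admits $(f_1,\dots,f_{n+1})$ as a reduced representation because each $f_i$ is zero-free. I would then apply Theorem~\ref{tsmt} to $\phi$ with the $n+2$ hyperplanes $H_i=\{x_i=0\}$ ($1\le i\le n+1$) and $H_0=\{x_1+\cdots+x_{n+1}=0\}$: an elementary linear-algebra check shows that any $n+1$ of these $n+2$ linear forms are independent, so the $H_i$ are in general position, and $\phi$ omits all of them since $f_1,\dots,f_{n+1}$ and $f_1+\cdots+f_{n+1}=-f_0$ are zero-free, whence $N_\phi^{(n)}(H_i,r)\equiv 0$. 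If $\phi$ were linearly nondegenerate, Theorem~\ref{tsmt} with $q=n+2$ would give $(1-\varepsilon)T_\phi(r)\le_{\rm exc}0$; choosing $\varepsilon<1$ and using that $T_\phi$ is nonnegative and nondecreasing forces $T_\phi\equiv 0$, so $\phi$ is constant, contradicting nondegeneracy (as $n\ge 1$). Hence $\phi$ is linearly degenerate, i.e.\ there is a nontrivial $\CC$-linear relation $\sum_{i=1}^{n+1}c_if_i=0$.

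Next I would run the reduction to fewer functions. Choosing such a relation whose support $T=\{i:c_i\neq 0\}$ has minimal cardinality, one gets $2\le|T|\le n+1$ (a one-term relation would force some $f_i=0$), and applying the inductive hypothesis to the $|T|<N$ zero-free functions $\{c_if_i\}_{i\in T}$ shows $T$ is a single proportionality class, since otherwise some proportionality subclass would give a strictly shorter relation. Now fix $i_1\in T$, solve the relation for $f_{i_1}$, and substitute into $f_0+\cdots+f_{n+1}=0$ to produce a relation $\sum_{j\ne i_1}d_jf_j=0$ with $d_j=1$ for $j\notin T$ (so in particular $d_0=1$ and the relation is nontrivial) and $d_j=1-c_j/c_{i_1}$ for $j\in T\setminus\{i_1\}$. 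Applying the inductive hypothesis once more to $\{d_jf_j:j\ne i_1,\ d_j\ne 0\}$ gives, for every proportionality class $\mathcal S$ of $\{f_0,\dots,f_{n+1}\}$, the identity $\sum_{j\in\mathcal S\setminus\{i_1\}}d_jf_j=0$. Finally I would reconcile this with $\sum_{i\in T}c_if_i=0$: since $T$ is one class, either $\mathcal S\cap T=\emptyset$, in which case $i_1\notin\mathcal S$ and the identity already reads $\sum_{j\in\mathcal S}f_j=0$, or $T\subseteq\mathcal S$, in which case $\sum_{j\in T\setminus\{i_1\}}c_jf_j=-c_{i_1}f_{i_1}$ turns it into $\sum_{j\in\mathcal S\setminus\{i_1\}}f_j+f_{i_1}=\sum_{j\in\mathcal S}f_j=0$. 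This establishes the lemma for $N$ functions and closes the induction.

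I expect the delicate point to be exactly this last bookkeeping: matching the proportionality classes of the two reduced systems with those of the original $\{f_0,\dots,f_{n+1}\}$ and checking that the coefficients $d_j$ and $c_i$ interact so that each class of the \emph{full} system, not merely of a reduced one, has vanishing sum. Everything analytic is confined to the single invocation of Theorem~\ref{tsmt}; the remainder is elementary linear algebra over $\CC$ together with careful induction bookkeeping.
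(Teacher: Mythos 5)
Your proposal is correct. Note that the paper does not prove this lemma at all — it simply cites \cite[Theorem A.3.3.2]{ru2001nevanlinna} — so there is no internal proof to compare against; your argument is essentially the classical derivation of Borel's theorem that lies behind that citation: apply Cartan's second main theorem (here Theorem \ref{tsmt}) to $[f_1:\cdots:f_{n+1}]$ with the $n+1$ coordinate hyperplanes and the diagonal hyperplane (all omitted because the $f_i$ and $-f_0=f_1+\cdots+f_{n+1}$ are zero-free), conclude linear degeneracy since otherwise $(1-\varepsilon)T_\phi(r)\le_{\rm exc}0$ forces $\phi$ constant, and then induct on the number of functions. I checked the bookkeeping you flagged as delicate and it goes through: minimality of the support $T$ of the degeneracy relation puts $T$ inside a single $\sim$-class; the substituted relation $\sum_{j\ne i_1}d_jf_j=0$ is nontrivial because $d_0=1$; the classes of the reduced system $\{d_jf_j\}$ are exactly the traces of the full classes (the $d_j$ being nonzero constants), so the inductive identities $\sum_{j\in\mathcal S\setminus\{i_1\}}d_jf_j=0$ are legitimate even when the trace is empty; and the two cases $\mathcal S\cap T=\emptyset$ and $T\subseteq\mathcal S$ recombine with $\sum_{j\in T}c_jf_j=0$ to give $\sum_{j\in\mathcal S}f_j=0$ in all cases. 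The only cosmetic imprecision is ``$T_\phi\equiv 0$'': the exceptional set and the $O(1)$ normalization mean you should conclude $T_\phi(r)=O(1)$ on an unbounded set, hence everywhere by monotonicity, hence $\phi$ constant, which contradicts nondegeneracy for $n\ge 1$; this does not affect the argument.
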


We also recall the following result of Green \cite{green1975} (See \cite[Chapter VII, Theorem 4.1]{La}).
\begin{lemma}\label{greenlemma}
Let $f_0,\hdots,f_n$ be  entire functions with no common zeros satisfying
$$
f_0^k+\cdots+f_n^k=0.
$$
Suppose that none of the $f_i$ are identically $0$.  Define an equivalence relation $i\sim j$ if $f_i/f_j$ is constant. 
If $k\ge n^2$, then for each equivalence class $S$ we have 
$$
\sum_{i\in S}f_i^k=0.
$$  
\end{lemma}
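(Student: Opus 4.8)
The plan is to adapt the proof of Borel's Lemma (Lemma~\ref{Borel}) to the present situation, where the functions are no longer zero-free but are exact $k$-th powers. The one genuinely new ingredient is that a zero of $f_i^k$ has multiplicity divisible by $k$, so its \emph{truncated} counting function, truncated at any level $Q<k$, is at most $\tfrac{Q}{k}$ times its untruncated counting function. Cartan's Second Main Theorem with truncation (Theorem~\ref{tsmt}) will then play the role that the outright vanishing of counting terms plays in the zero-free case of Borel's Lemma.

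Concretely, I would regard $f_0^k,\dots,f_n^k$ as vectors in the $\CC$-vector space of meromorphic functions, consider the associated linear matroid on $\{0,\dots,n\}$ (which is dependent since $f_0^k+\dots+f_n^k=0$), and prove the key claim that \emph{every circuit has exactly two elements}. Granting this, a circuit $\{i,j\}$ is exactly a pair with $f_i^k,f_j^k$ proportional, i.e.\ $f_i/f_j\in\CC^*$, i.e.\ $i\sim j$; and then any family of the $f_i^k$ drawn from distinct $\sim$-classes is linearly independent (a dependent such family would contain a circuit of size $\ge 3$). Picking a representative $w_s=f_{j_s}^k$ of each class $S_s$, the $w_s$ are independent, and for $i\in S_s$ one has $f_i^k=\mu_i w_s$ with $\mu_i\in\CC^*$, so $\sum_{i=0}^n f_i^k=0$ splits as $\sum_s\big(\sum_{i\in S_s}\mu_i\big)w_s=0$, forcing $\sum_{i\in S_s}\mu_i=0$ and hence $\sum_{i\in S_s}f_i^k=0$ for every $s$. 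So the entire content is the claim about circuit sizes.

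To prove that claim, I would argue by contradiction: suppose $T=\{i_0,\dots,i_p\}$, $p\ge 2$, is a circuit, with its essentially unique dependence $\sum_{j=0}^p\lambda_{i_j}f_{i_j}^k=0$, all $\lambda_{i_j}\ne 0$. Dividing $f_{i_0}^k,\dots,f_{i_{p-1}}^k$ by their gcd $c$, one gets a reduced holomorphic curve $\tilde\Psi=[\tilde f_0^k:\cdots:\tilde f_{p-1}^k]\colon\CC\to\PP^{p-1}$, which is linearly nondegenerate exactly because $T$ is a circuit; moreover $c$, being a gcd of functions all of whose orders are multiples of $k$, has all orders multiples of $k$, and hence so do all the $\tilde f_j^k$ and $\tilde f_p^k$. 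I would then apply Theorem~\ref{tsmt} to $\tilde\Psi$ with the $p+1$ hyperplanes given by the $p$ coordinate hyperplanes together with $\{\sum_j\lambda_{i_j}x_j=0\}$ (the one corresponding to $\tilde f_p^k$); these are in general position since all $\lambda_{i_j}\ne 0$. Every zero counted by one of these hyperplane intersections has multiplicity divisible by $k>p-1$, so each truncated counting function is $\le\frac{p-1}{k}(T_{\tilde\Psi}(r)+O(1))$ by the First Main Theorem, and Cartan's inequality becomes $(1-\varepsilon)T_{\tilde\Psi}(r)\le_{\rm exc}\frac{(p+1)(p-1)}{k}T_{\tilde\Psi}(r)+O(1)$. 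Since $p\le n$ and $k\ge n^2$, we have $\frac{p^2-1}{k}\le\frac{p^2-1}{p^2}<1$, so a small enough $\varepsilon$ forces $T_{\tilde\Psi}(r)=O(1)$, contradicting that a linearly nondegenerate curve in $\PP^{p-1}$ with $p\ge2$ is nonconstant. (The remaining case of a size-two circuit $\{i_0,i_1\}$ needs no Cartan: $(f_{i_0}/f_{i_1})^k\in\CC^*$ already forces $f_{i_0}/f_{i_1}\in\CC^*$.)

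The main obstacle — and the sole place the hypothesis $k\ge n^2$ is used — is the bookkeeping in the previous paragraph: because the $f_i$ need not be zero-free, one has to bound the counting functions of the $f_i^k$, and this succeeds only through the divisibility of their zero multiplicities by $k$, which must be shown to survive passage to the gcd-reduced representation, yielding the decisive bound $\frac{(p+1)(p-1)}{k}<1$ against the $(1-\varepsilon)$ on the left-hand side of Cartan's theorem. The other points to verify carefully are the general position of the chosen hyperplanes and the nondegeneracy of $\tilde\Psi$, both of which reduce to the fact that $T$ is a circuit, and the elementary matroid-theoretic passage from ``all circuits have size two'' to the statement about $\sim$-classes.
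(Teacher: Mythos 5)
Your proof is correct. Note that the paper does not actually prove Lemma \ref{greenlemma} itself (it is quoted from Green, cf.\ Lang), so there is no in-paper proof to match line by line; but your argument is essentially the same Cartan-type argument that the paper uses for the moving-target analogue, Lemma \ref{Mgreen1}: isolate a minimal linear dependence among the $f_i^k$ (your circuit; the paper's expression of one $f_i^k$ in terms of an independent subfamily with all coefficients nonzero), pass to a reduced representation, apply the truncated second main theorem (Theorem \ref{tsmt}) with the coordinate hyperplanes together with the one extra hyperplane furnished by the dependence, and exploit that every zero multiplicity is divisible by $k$, so each truncated counting function is at most $\frac{p-1}{k}\bigl(T_{\tilde\Psi}(r)+O(1)\bigr)$, which beats the $(1-\varepsilon)$ on the left once $k\ge n^2\ge p^2>p^2-1$. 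The two checks you flag are exactly the right ones and both go through: divisibility of orders by $k$ survives division by the gcd $c$ (a minimum of multiples of $k$ is a multiple of $k$, and $f_{i_p}^k/c$ is entire because $c$ divides the remaining terms of the relation), and general position of the $p+1$ hyperplanes in $\PP^{p-1}$ holds precisely because every coefficient in a circuit relation is nonzero. Where your route differs usefully from the paper's moving-target version is the matroid packaging: ``all circuits have size two'' immediately gives that representatives of distinct $\sim$-classes are linearly independent, so the relation splits class by class and you obtain the full class-sum conclusion of Lemma \ref{greenlemma}, whereas Lemma \ref{Mgreen1} only asserts the pairwise statement that each $f_i$ has a partner $j$ with $(f_i/f_j)^k$ small; your handling of the size-two circuits (a meromorphic $k$-th root of a nonzero constant is constant) closes that case correctly without Cartan.
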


We will also make use of the following result on proximity functions.

\begin{theorem}\label{Theorem10.3}
Let $G\in\mathbb C[x_1, \cdots,x_n ]$ be a polynomial that does not vanish at the origin $(0,\hdots,0)$. 
Suppose that $g_1,\dots,g_n $ are entire functions such that $g_1^{i_1} \cdots g_n^{i_n} \notin\CC$ for any 
index set $(i_1,\dots,i_n)\in\mathbb Z^n\setminus \{(0,\dots,0)\}$.  For all $\epsilon>0$,  
\begin{enumerate}
\item there exists a positive integer $k_0$ such that for all $k\ge k_0$,
$$
m_{G(g_1^k,\hdots,g_n^k)}(0,r)\le_{\rm exc}\epsilon \max_{1\le i\le n}\{T_{g_i^k}(r)\};
$$
\item if in addition each $g_i$, $1\le i\le n$, has no zero, then 
$$
m_{G(g_1,\hdots,g_n)}(0,r)\le_{\rm exc}\epsilon \max_{1\le i\le n}\{T_{g_i}(r)\}.
$$
\end{enumerate}
\end{theorem}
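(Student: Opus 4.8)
The plan is to realize $m_{G(g_1^k,\ldots,g_n^k)}(0,r)$, up to $O(1)$, as the proximity function of an associated holomorphic curve into $\PP^\ell$ relative to a single fixed hyperplane, and then to extract smallness from Cartan's second main theorem by adjoining the $\ell+1$ coordinate hyperplanes. Write $G=a_0+\sum_{j=1}^{\ell}a_{{\bf i}(j)}{\bf x}^{{\bf i}(j)}$ with $a_0=G(0,\ldots,0)\neq0$ and every $a_{{\bf i}(j)}\neq0$; the case $\ell=0$ is trivial, so assume $\ell\geq1$. Put $h_j:={\bf g}^{{\bf i}(j)}$ (so in case (b) each $h_j$ is entire and zero-free), so that $G(g_1^k,\ldots,g_n^k)=a_0+\sum_{j=1}^\ell a_{{\bf i}(j)}h_j^k$, and let $\Phi_k:=(1,h_1^k,\ldots,h_\ell^k)\colon\CC\to\PP^\ell$, which is exactly the curve $\frak g_G$ attached to $g_1^k,\ldots,g_n^k$. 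Having the constant $1$ as its first coordinate, this is automatically a reduced representation; moreover $T_{\Phi_k}(r)=kT_{\Phi_1}(r)$, and $T_{\Phi_1}(r)\leq C\max_{1\le i\le n}T_{g_i}(r)$ for a constant $C$ depending only on $G$ and $n$. Taking $H\subset\PP^\ell$ to be the hyperplane $a_0x_0+\sum_{j=1}^\ell a_{{\bf i}(j)}x_j=0$, a direct comparison of Weil functions together with $\log\|\Phi_k\|\ge0$ gives $m_{G(g_1^k,\ldots,g_n^k)}(0,r)\leq m_{\Phi_k}(H,r)+O(1)$ (valid once $G(g_1^k,\ldots,g_n^k)\not\equiv0$, which holds for all large $k$ by the next step).

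First I would verify that $\Phi_k$ is linearly nondegenerate for all large $k$, and that $\Phi_1$ is linearly nondegenerate in case (b). The ratios $h_i/h_j={\bf g}^{{\bf i}(i)-{\bf i}(j)}$ for $0\le i<j\le\ell$ (with $h_0:=1$) are nonconstant by the hypothesis on $g_1,\ldots,g_n$, since the ${\bf i}(j)$ are pairwise distinct and ${\bf i}(0)=(0,\ldots,0)$. In case (b) the $h_j$ are entire without zeros, so any nontrivial linear relation among $1,h_1,\ldots,h_\ell$ would, by Borel's Lemma (Lemma \ref{Borel}), force a proper subsum of nonzero, pairwise nonproportional terms to vanish, which is impossible. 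In case (a), a relation $\sum_{j=0}^{\ell}c_jh_j^k=0$ becomes, after dividing out the greatest common divisor of the terms with $c_j\neq0$, an identity $\sum f_j^k=0$ whose summands $f_j$ are entire, have no common zero, and are pairwise nonproportional; Green's Lemma (Lemma \ref{greenlemma}) then forces every $f_j^k\equiv0$ as soon as $k\geq(\ell+1)^2$, a contradiction.

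Next I would apply the truncated Cartan second main theorem (Theorem \ref{tsmt}) to $\Phi_k$ and the $\ell+2$ hyperplanes $\{x_0=0\},\ldots,\{x_\ell=0\},H$, which are in general position precisely because all of $a_0,a_{{\bf i}(1)},\ldots,a_{{\bf i}(\ell)}$ are nonzero. For any $\epsilon>0$ this gives
$$(1-\epsilon)T_{\Phi_k}(r)\le_{\rm exc}\sum_{j=0}^{\ell}N^{(\ell)}_{\Phi_k}(\{x_j=0\},r)+N^{(\ell)}_{\Phi_k}(H,r).$$
The decisive estimate is that each $N^{(\ell)}_{\Phi_k}(\{x_j=0\},r)=N^{(\ell)}_{h_j^k}(0,r)\leq\ell\,\overline{N}_{h_j}(0,r)\leq\ell(\deg G)\sum_iN_{g_i}(0,r)$, so the whole sum is $\leq C\max_iT_{g_i}(r)$ with $C$ independent of $k$ (and these terms vanish outright in case (b)). Feeding this back, together with the First Main Theorem (Theorem \ref{FMT}), which gives $m_{\Phi_k}(H,r)=T_{\Phi_k}(r)-N_{\Phi_k}(H,r)+O(1)$, with $N^{(\ell)}_{\Phi_k}(H,r)\leq N_{\Phi_k}(H,r)$ and $T_{\Phi_k}(r)=kT_{\Phi_1}(r)\le Ck\max_iT_{g_i}(r)$, yields
$$m_{G(g_1^k,\ldots,g_n^k)}(0,r)\le_{\rm exc}\epsilon T_{\Phi_k}(r)+C\max_iT_{g_i}(r)+O(1)\leq(\epsilon Ck+C)\max_iT_{g_i}(r)+O(1).$$
Since $\max_iT_{g_i^k}(r)=k\max_iT_{g_i}(r)\to\infty$, given $\epsilon_0>0$ one fixes $\epsilon$ small and then $k_0$ large (also beyond the nondegeneracy threshold $(\ell+1)^2$) so that $\epsilon Ck+C<\epsilon_0k$ for $k\ge k_0$, absorbing the $O(1)$; this proves (a), and the case $k=1$ with $\epsilon$ small proves (b).

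The step I expect to be the main obstacle is obtaining the bound on the coordinate-hyperplane contribution that is uniform in $k$: the untruncated counting functions $N_{h_j^k}(0,r)=kN_{h_j}(0,r)$ grow linearly in $k$ and would cancel the gain from the second main theorem, so it is essential to use the truncated form of Cartan's theorem, after which the zeros of the $g_i$ contribute at most at level $\ell$ and become negligible against $k\max_iT_{g_i}(r)$ — this is precisely why (a) can only be an asymptotic statement in $k$. A lesser technical point is the nondegeneracy argument in case (a), where one must first factor out a common divisor of the $h_j^k$ before Green's Lemma can be applied.
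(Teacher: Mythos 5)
Your proof is correct and follows essentially the same route as the paper: the same auxiliary curve $(1,{\bf g}^{k{\bf i}(1)},\hdots,{\bf g}^{k{\bf i}(\ell)}):\CC\to\PP^{\ell}$, the same $\ell+2$ hyperplanes (coordinate hyperplanes plus the one given by $G$), nondegeneracy via Borel/Green, and the truncated Cartan second main theorem. The only differences are harmless bookkeeping: you bound the coordinate-hyperplane terms by $\ell\,N^{(1)}_{{\bf g}^{{\bf i}(j)}}(0,r)\le C\max_i T_{g_i}(r)$ rather than by $\frac{\ell}{k}T_{\frak g(k)}(r)$ as in the paper, and you conclude via the first main theorem for the curve and the hyperplane $H$ instead of via $T_{G(g_1^k,\hdots,g_n^k)}(r)=m_{G(g_1^k,\hdots,g_n^k)}(\infty,r)\le T_{\frak g(k)}(r)+O(1)$.
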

 \begin{proof} 
By  Lemma \ref{greenlemma}, the set $\{{\bf g}^{k{\bf i}}:=g_1^{ki_1}\cdots g_n^{ki_n}\,|\, {\bf i}=(i_1,\hdots,i_n)\in \mathbb{N}^n, |{\bf i}|\le d\}$ 
is linearly independent for $k\ge \binom {d+n}{n}^2$.  Since $G(0,\hdots,0)\ne 0$, $G$ must have a non-zero constant term, and by  arranging the index set in some order, we may write  
$$
G=a_{{\bf i}(0)}+\sum_{j=1}^{\ell}a_{{\bf i}(j)}{\bf x}^{{\bf i}(j)},
$$      
where $a_{{\bf i}(j)}\ne 0$ for $0\le j\le \ell$.  Then we have  
$$
G(g_1^k,\hdots,g_n^k)=a_{{\bf i}(0)}+\sum_{j=1}^{\ell}a_{{\bf i}(j)}{\bf g}^{k{\bf i}(j)}.
$$  

We apply  Theorem \ref{tsmt}, Cartan's truncated second main theorem, to the holomorphic map 
$$
\frak g(k):=(1,{\bf g}^{k{\bf i}(1)},\hdots,{\bf g}^{k{\bf i}(\ell)}):\CC\to \mathbb P^{\ell}
$$ associated with the above expression ($k\ge \binom {d+n}{n}^2$), and with the set of hyperplanes given by the coordinate hyperplanes of $\PP^{\ell}$ and the one defined by $\sum_{i=0}^{\ell}a_{{\bf i}(j)}X_j$. Then for any $\epsilon>0$, we have
\begin{align}\label{usingtsmt}
(1-\frac \epsilon2)T_{\frak g(k)}\le_{\operatorname{exc}} N^{(\ell)}_{G(g_1^k,\hdots,g_n^k)}(0,r)+\sum_{j=1}^{\ell} N^{(\ell)}_{{\bf g}^{k{\bf i}(j)}}(0,r).
\end{align}
We note that
$$
N^{(\ell)}_{{\bf g}^{k{\bf i}(j)}}(0,r)\le  \frac{\ell}k N_{{\bf g}^{k{\bf i}(j)}}(0,r)\le  \frac{\ell}k T_{{\bf g}^{k{\bf i}(j)}}(r)\le \frac{\ell}kT_{\frak g(k)}(r),
$$
where the last inequality is due to the definition of characteristic functions and that $\frak g(k)=(1,{\bf g}^{k{\bf i}(1)},\hdots,{\bf g}^{k{\bf i}(\ell)})$.  Then for $k>\max\{\frac{2\ell^2}{\epsilon}, \binom {d+n}{n}^2\}$,
\begin{align}\label{usetsmta}
(1- \epsilon )T_{\frak g(k)}(r)\le_{\operatorname{exc}} N_{G(g_1^k,\hdots,g_n^k)}(0,r)=T_{G(g_1^k,\hdots,g_n^k)}(r)-m_{G(g_1^k,\hdots,g_n^k)}(0,r)+O(1).
\end{align}
Since  $G(g_1^k,\hdots,g_n^k)$ is an entire function, 
$$
T_{G(g_1^k,\hdots,g_n^k)}(r)=m_{G(g_1^k,\hdots,g_n^k)}(\infty,r)\le T_{\frak g(k)}(r)+O(1).
$$
Consequently,
$$
m_{G(g_1^k,\hdots,g_n^k)}(0,r)\le_{\operatorname{exc}}  \epsilon T_{\frak g(k)}(r)\le_{\operatorname{exc}} d \epsilon T_ {(1,g_1^k,\cdots,g_n^k)}(r)\le dn \epsilon \max_{1\le i\le n}\{T_{g_i^k}(r)\}.
$$

When  the $g_i$ are entire functions without zeros, we may assume  that the set $\{{\bf g}^{ {\bf i}}:=g_1^{ i_1}\cdots g_n^{ i_n}\,|\, {\bf i}=(i_1,\hdots,i_n)\in \mathbb{N}^n, |{\bf i}|\le d\}$ 
is linearly independent by Lemma \ref{Borel}.  Then we can repeat the previous argument  for $k=1$ with the additional condition $N_{g_i}(0,r)=O(1)$, $1\le i\le n$, to conclude the proof.
 \end{proof}

\begin{proof}[Proof of Theorem \ref{RefinementII} and Theorem \ref{gcdunit}]
We first claim that  $g_1,\dots,g_n$ are algebraically independent over $\CC$ under the assumption that $g_1^{i_1} \cdots g_n^{i_n} \notin\CC$ for any 
index set $(i_1,\dots,i_n)\in\mathbb Z^n\setminus \{(0,\dots,0)\}$.
If not, then they satisfy a non-trivial $\CC$-linear relation, say
\begin{align}\label{relation1}
\sum_{\bf i} a_{\bf i} g_1^{i_1}\cdots g_n^{i_n}=0,
\end{align}
where the sum is over finitely many index sets ${\bf i}=(i_1,\dots,i_n)$ and $a_{\bf i}\in\CC^*$.  We may further assume that no proper sub-sum
 of the left hand side of (\ref{relation1})
is zero.  Since $g_1,\ldots, g_n$ are entire functions without zeros,   Borel's Lemma   implies that we have a pair of indices ${\bf i}=(i_1,\dots,i_n)\ne {\bf j}=(j_1,\dots,j_n)$ such that $a_{\bf i} g_1^{i_1}\cdots g_n^{i_n}$ is a constant multiple of  some $a_{\bf j}g_1^{j_1}\cdots g_n^{j_n}$  appearing on the left hand side of \eqref{relation1}, and hence $g_1^{i_1-j_1}\cdots g_n^{i_n-j_n}\in\CC^*$, contradicting our assumption.

We prove part \ref{refa} of both theorems first.  Consider $(\mathbb{C}^*)^n\subset \mathbb{P}^n$, where we identify $(x_1,\ldots, x_n)\in(\mathbb{C}^*)^n$ with $[1:x_1:\cdots :x_n]$, and let $\overline{Y}$ be the Zariski closure of $Y$ in $\mathbb{P}^n$.  Then $N_{\bf g}(Y,r)=N_{\bf g}(\overline{Y},r)$, and we may assume that $Y$ is a closed subscheme of $\mathbb{P}^n$ (of codimension at least $2$).

 Let $I\subset \CC[x_0,\hdots,x_n]$ be a homogeneous ideal associated to $Y$.  We can find homogeneous polynomials $\tilde F,\tilde G\in  I$ of the same degree $d$ such that $\tilde F$ and $\tilde G$ are coprime.
 Let $Y'$ be the closed subscheme defined by the ideal $(\tilde F,\tilde G)$.  Then
 $$
  N_{{\bf g}}(Y, r)  \leq  N_{{\bf g}}(Y', r)   
 $$
 for all $r>0$.
 Therefore, we may assume that $Y$ is defined by  the ideal $(\tilde F,\tilde G)$ after replacing $Y$ by $Y'$.
 Then  for any $\epsilon>0$, Theorem \ref{Refinement} for any (large) $m$ implies that
 \begin{align}\label{estimate}
 MN_{{\bf g}}(Y, r)=MN_{\rm gcd}(\tilde F({\bf g}),\tilde G({\bf g}),r) \le_{\rm exc}  (M'mn+\epsilon m ) T_{\bf g}(r)+O(1)
\end{align} 
as the $g_i$ are entire functions with no zeros.
Let $\epsilon'>0$.  Since $M'=O(m^{n-2})$ and $M=\frac{m^n}{n!}+ O(m^{n-1})$,    choosing $m$ large enough, depending only on $\epsilon'$, (\ref{estimate}) implies that
\begin{align}\label{firstcount}
N_{\rm gcd}(\tilde F({\bf g}),\tilde G({\bf g}),r)\le_{\rm exc}  \epsilon' T_{\bf g}(r).
\end{align} 
This completes the proof of Theorem \ref{RefinementII}\ref{refa}.

To show Theorem \ref{gcdunit}\ref{gcda}, we note that we may assume $\deg F=\deg G=d$  since
$$
N_{\gcd}(F(g_1,\hdots,g_n), G(g_1,\hdots,g_n),r)\le  N_{\gcd}(F^{e}(g_1,\hdots,g_n), G^{h}(g_1,\hdots,g_n),r),
$$
where $e=\deg G$ and $h=\deg F$.
Then Theorem \ref{gcdunit}\ref{gcda} follows from (\ref{firstcount}) by taking
\begin{align*}
\tilde F (x_0,\dots,x_n) =x_0^d  F\left(\frac{x_1}{x_0},\dots,\frac{x_n}{x_0}\right),\  
\tilde G(x_0,\dots,x_n) =x_0^d G\left(\frac{x_1}{x_0},\dots,\frac{x_n}{x_0}\right),
\end{align*}
and using Lemma \ref{gineq}.
%\begin{align*}
%T_{\bf g}(r)&=\int_0^{2\pi} \log\max\{1,|g_1(re^{i\theta})|\dots ,|g_n(re^{i\theta})|\}\frac{d\theta}{2\pi}\\
%&\le \int_0^{2\pi} \sum_{i=1}^n\log\max\{1,|g_i(re^{i\theta})|\}\frac{d\theta}{2\pi}\\
%&=\sum_{i=1}^nm_{g_i}(r)=\sum_{i=1}^nT_{g_i}(r)+O(1),
%\end{align*}
%since each $g_i$, $1\le i\le n$, is an entire function.

Suppose in addition that, say, $G$ does not vanish at the origin $(0,\ldots, 0)$.  It is immediate from the definitions that
\begin{align*}
m_{\gcd}(F(g_1,\hdots,g_n), G(g_1,\hdots,g_n),r)\leq m_{G(g_1,\hdots,g_n)}(0,r),
\end{align*}
and so Theorem \ref{Theorem10.3} implies that
\begin{align*}
m_{\gcd}(F(g_1,\hdots,g_n), G(g_1,\hdots,g_n),r)\le_{\rm exc}  \epsilon\max_{1\le i\le n}\{ T_{g_i}(r)\}.
\end{align*}
Combined with part \ref{gcda} above and Lemma \ref{gcdfirstmain}, we obtain Theorem \ref{gcdunit}\ref{gcdb}.

It remains to prove Theorem \ref{RefinementII}\ref{refb}. Let $X$ be a nonsingular projective toric compactification of $(\CC^*)^n$.  Let $\overline{Y}$ be the Zariski closure of $Y$ in $X$, and suppose that $\overline{Y}$ is in general position with the boundary of $(\CC^*)^n$ in $X$.  Since $N_{\bf g}(Y,r)=N_{\bf g}(\overline{Y},r)$, by part \ref{refa} it suffices to show that $m_{\bf g}(\overline{Y},r)\le_{\rm exc}  \epsilon T_{\bf g}(r).$  In fact, we will show the stronger statement with $\overline{Y}$ replaced by an effective divisor $D$ on $X$ in general position with the boundary of $(\CC^*)^n$ in $X$. We follow the proof of \cite[Theorem 4.4]{levin2017}, which shows that there exist embeddings $\phi_i:(\CC^*)^n\to \mathbb{A}^n$ (which extend an automorphism of $(\CC^*)^n$) and polynomials $p_i\in \CC[x_1,\ldots, x_n]$ nonvanishing at the origin, $i=1,\ldots, t$, such that for every $P\in (\CC^*)^n\subset X(\CC)$, there exists $i\in \{1,\ldots, t\}$ satisfying
\begin{align*}
\lambda_D(P)= -\log |p_i(\phi_i(P))|+O(1).
\end{align*}
Thus,
\begin{align*}
\lambda_D(P)\leq -\sum_{i=1}^t\log^- |p_i(\phi_i(P))|+O(1)
\end{align*}
for all $P\in (\CC^*)^n\subset X(\CC)$.  By Theorem \ref{Theorem10.3}, this implies that for any $\epsilon>0$,
\begin{align*}
m_{\bf g}(D,r)&\leq \sum_{i=1}^t m_{p_i(\phi_i(g_1,\ldots, g_n))}(0,r)+O(1)\\
&\leq_{\rm exc} \epsilon T_{\bf g}(r)+O(1),
\end{align*}
completing the proof.

\end{proof}

%%%%%%%%%%%%%%%%%%%%
\begin{proof}[Proof of Theorem \ref{gcdPn}]
By replacing $F$ and $G$ by suitable linear combinations of $F$ and $G$, we may assume that $F$ and $G$ have the same degree $d$,  and that neither $F$ nor $G$ vanishes at the origin.  We now consider the two related homogeneous polynomials
\begin{align*}
F_1(x_0,x_1,\dots, x_{n}) &=x_{0}^{d}  F\left(\frac{x_1}{x_0},\dots,\frac{x_n}{x_0}\right),\\  
G_1(x_0,x_1,\dots, x_{n}) &=x_{0}^{d}  G\left(\frac{x_1}{x_0},\dots,\frac{x_n}{x_0}\right).
\end{align*}
 Since $F$ and $G$ are coprime, it follows easily that $F_1$ and $G_1$ are coprime.
Let $g_1,\dots,g_n$ be meromorphic functions.
Then there exists an entire function $h_0$ such that $h_0$ and $h_i:=g_i\cdot h_0$, $1\le i\le n$,  are entire functions without a common zero.  Therefore, $(h_0,h_1,\hdots,h_n)$ is a reduced form of  the holomorphic map ${\bf g}:=[1:g_1:\cdots:g_n]:\CC\to \PP^n$.   
Assume that $g_1^{ j_1}\dots g_n^{ j_n}$ is not constant for any $(j_1,\dots,j_n)\ne(0,\dots,0)\in\mathbb Z^n$.   
Let
\begin{align}\label{lowerk}
k\ge \binom {m+n}{n}^2.
\end{align}
Then the set $\{(h_0^k)^{i_1}\cdots (h_n^k)^{i_n}: i_0+\cdots+i_n= m\}$ is linearly independent over $\CC$ by Lemma \ref{greenlemma}.
Let $ {\bf h}^k =(h_0^k,\cdots,h_n^k): \mathbb C \to \PP^n$.  Let $I$ be the set of exponents ${\bf i}$ such that ${\bf x}^{\bf i}$ appears with a nonzero coefficient in either $F_1$ or $G_1$.  Note that

\begin{align*}
F_1({\bf h}^k) = h_0^{kd}F(g_1^k,\dots,g_n^k),\text{ and }
G_1({\bf h}^k) =  h_0^{kd}G(g_1^k,\dots,g_n^k),
\end{align*}
and so for any $z\in \mathbb{C}$,
\begin{align*}
v_z(F_1({\bf h}^k)) = v_z(h_0^{kd})+v_z(F(g_1^k,\dots,g_n^k))\geq \min_{{\bf i}\in I}v_z({\bf h}^{k{\bf i}})+v_z(F(g_1^k,\dots,g_n^k)),\\
v_z(G_1({\bf h}^k)) = v_z(h_0^{kd})+v_z(G(g_1^k,\dots,g_n^k))\geq \min_{{\bf i}\in I}v_z({\bf h}^{k{\bf i}})+v_z(G(g_1^k,\dots,g_n^k)),
\end{align*}
since $x_0^d$ is a monomial appearing nontrivially in $F_1$ and $G_1$ as $F$ and $G$ don't vanish at the origin.  Then
\begin{align*}
v_z(F_1({\bf h}^k))- \min_{{\bf i}\in I}v_z({\bf h}^{k{\bf i}})\geq v_z^+(F(g_1^k,\dots,g_n^k)),\\
v_z(G_1({\bf h}^k)) -\min_{{\bf i}\in I}v_z({\bf h}^{k{\bf i}})\geq v_z^+(G(g_1^k,\dots,g_n^k)),
\end{align*}
and it follows that
\begin{align*}
N_{\rm gcd}(F_1({\bf h}^k),G_1({\bf h}^k),r)- N_{\rm gcd}(\{{\bf h}^{k{\bf i}}\}_{{\bf i}\in I},r)\geq N_{\gcd}(F(g_1^k,\dots,g_n^k), G(g_1^k,\dots,g_n^k),r).
\end{align*}
Then by Theorem \ref{Refinement}, for any $\epsilon>0$ there exists a positive integer $L$ such that for all positive integers $k$ satisfying (\ref{lowerk}), we have  the following:
\begin{align*}
  M&N_{\gcd}(F(g_1^k,\dots,g_n^k), G(g_1^k,\dots,g_n^k),r) 
   \le M(N_{\rm gcd}(F_1({\bf h}^k),G_1({\bf h}^k),r)- N_{\rm gcd}(\{{\bf h}^{k{\bf i}}\}_{{\bf i}\in I},r))\\
&\le_{\rm exc} c_{m,n,d}  \sum_{i=0}^n  N^{(L)}_{ h_i^k}(0,r)+ \left(\frac{m}{n+1}\binom{m+n}{n}-c_{m,n,d}-M'm\right)\sum_{i=0}^nN_{h_i^k}(0,r)\\
   &+  \left(\binom {m+n-2d}{n}-M\right)N_{\rm gcd}(\{{\bf h}^{k{\bf i}}\}_{{\bf i}\in I},r) + (M'mn+\epsilon m)T_{{\bf h}^k}(r)+O(1),
\end{align*}
where $c_{m,n,d}=2\binom {m+n-d}{n+1}- \binom {m+n-2d}{n+1}$,
$M=2\binom {m+n-d}{n}- \binom {m+n-2d}{n}$, and $M'$ is an integer of order $O(m^{n-2})$.
Elementary computations give that 
\begin{align}
\binom {m+n}{n} &=\frac{m^n}{n!}+\frac{(n+1)m^{n-1}}{2(n-1)!}+O(m^{n-2}),\cr
 c_{m,n,d}&=\frac{m^{n+1}}{(n+1)!}+\frac{m^n}{2(n-1)!}+O(m^{n-1}),\cr
 M&= \frac{m^n}{n!}+ O(m^{n-1}). 
\end{align}
Then 
\begin{align*}
\frac{m}{n+1}\binom{m+n}{n}-c_{m,n,d}&=O(m^{n-1}),\\
\binom {m+n-2d}{n}-M&=O(m^{n-1}).
\end{align*}
Furthermore,

\begin{align*}
\sum_{i=0}^n N^{(L)}_{ h_i^k}(0,r)\leq \frac{L}{k}\sum_{i=0}^n N_{ h_i^k}(0,r),
\end{align*}
and
\begin{align*}
\sum_{i=0}^n N_{ h_i^k}(0,r)=\sum_{i=0}^n N_{{\bf h}^k}(H_i,r)\le (n+1)T_{{\bf h}^k}(r)=(n+1)T_{{\bf g}^k}(r),
\end{align*} 
where $H_i$ is the coordinate hyperplane defined by $x_i=0$.
Then we find that after choosing $m$ sufficiently large, for all sufficiently large $k$ (depending on $m$),
\begin{align}\label{gcdkcounting}
N_{\gcd}(F(g_1^k,\dots,g_n^k), G(g_1^k,\dots,g_n^k),r)\le_{\rm exc} \epsilon T_{{\bf g}^k}(r).
\end{align}
This concludes the proof of Theorem \ref{gcdPn} \ref{kgcda}.
If in addition $g_1,\hdots,g_n$ are entire functions and, say, $G$ does not vanish at the origin, then 
\begin{align*}
m_{\gcd}(F(g_1^k,\hdots,g_n^k), G(g_1^k,\hdots,g_n^k),r)\leq m_{G(g_1^k,\hdots,g_n^k)}(0,r),
\end{align*}
and so Theorem \ref{Theorem10.3} implies that
\begin{align*}
m_{\gcd}(F(g_1^k,\hdots,g_n^k), G(g_1^k,\hdots,g_n^k),r)\le_{\rm exc}  \epsilon\max_{1\le i\le n}\{ T_{g_i^k}(r)\}
\end{align*}
for $k$ large enough.  Together with (\ref{gcdkcounting}), we reach the conclusion of  \ref{kgcdb}.
\end{proof}
 
\begin{proof}[Proof of Corollary \ref{fgagcd} ]
If $f^ig^j\notin\CC$ for any $(i,j)\ne(0,0)\in \mathbb Z^2$, then  then the assertion follows from Theorem  \ref{gcdPn} for the meromorphic functions $f$ and $g$ with the polynomials
$F=x_1-1$ and $G=x_2-1$.
Suppose that $f^ig^j=c\in \CC$ for some $(i,j)\in \mathbb Z^2\setminus \{(0,0)\}$.
If $f^k-1$ and $g^k-1$ have no common zero for all $k$ sufficiently large, then  the gcd inequality (\ref{asymgcdcomplex}) holds trivially.  Otherwise, we may find $z_0\in \CC$ such that $f^k(z_0)=g^k(z_0)=1$ for some $k$.  This implies that $c^k=1$ and hence $f^{ik}g^{jk}=1$, contradicting the assumption that 
$f$ and $g$ are multiplicatively independent. 
\end{proof}
The proof of Corollary \ref{fgunit} is similar (with $k=1$) to the proof of Corollary \ref{fgagcd}, and so we omit it.

\begin{proof}[Proof of Corollary \ref{divisibility}]
We prove part \ref{diva}.  The proof of part \ref{divb} is similar.

Suppose that  $g_1^{i_1} \cdots g_n^{i_n} \notin\CC$ for any 
index set $(i_1,\dots,i_n)\in\mathbb Z^{n+1}\setminus \{(0,\dots,0)\}$ and that $F(g_1^k,\hdots,g_n^k)/G(g_1^k,\hdots,g_n^k)$ is an entire function.  Then 
$$
N_{G(g_1^k,\hdots,g_n^k)}(0,r)=N_{\gcd}(F(g_1^k,\hdots,g_n^k), G(g_1^k,\hdots,g_n^k),r),
$$
and by Theorem \ref{gcdPn} \ref{kgcda}, for any $\epsilon>0$, there exists a positive integer $k_0$ such that 
\begin{align}\label{countingG}
N_{G(g_1^k,\hdots,g_n^k)}(0,r) \le_{\rm exc}  \epsilon\max_{1\le i\le n}\{ T_{g_i^k}(r)\},
\end{align}
if $k\ge k_0$.
On the other hand, using the same notation as in the proof of Theorem \ref{Theorem10.3} and recalling the first part of 
(\ref{usetsmta}),
\begin{align}
(1- \epsilon )T_{\frak g(k)}(r)\le_{\operatorname{exc}} N_{G(g_1^k,\hdots,g_n^k)}(0,r)
\end{align}
for $k>k_1:=\max\{\frac{2\ell^2}{\epsilon}, \binom {d+n}{n}^2\}$. 
Together with (\ref{countingG}), we get
$$
T_{\frak g(k)}(r)\le_{\rm exc}  2\epsilon\max_{1\le i\le n}\{ T_{g_i^k}(r)\}+O(1)
$$
for $k\ge \max\{k_0,k_1\}$. 
Hence,
$$
T_{\frak g_G }(r)\le_{\rm exc}  2\epsilon\max_{1\le i\le n}\{ T_{g_i}(r)\}+O(1),
$$
contradicting the assumption that $T_{\frak g_G}(r)\asymp  \max_{1\le i\le n}\{ T_{g_i}(r)\}$.
\end{proof}
%%%%%%%%%%%%%%%%%%%%%%%
\section{GCD with Moving Targets}\label{moving}  
\subsection{Statement of the main results}
Let ${\bf g}$ be a holomorphic map from $\CC$ to  $\PP^n$.   Let $K_{\bf g}$ be the set containing all meromorphic functions $a$ such that $T_{a}(r)={\rm o} (T_{\bf g}(r))$.  By the basic properties of characteristic functions,  $K_{\bf g}$ is a field.  

\begin{theorem}\label{Mgcdunit}
Let $g_1,\hdots,g_n$ be  entire functions  without zeros and ${\bf g}=(1,g_1,\dots,g_n)$.  Let $F,G\in K_{\bf g}[x_1,\hdots,x_n]$ be nonconstant coprime polynomials.  Assume that  $g_1^{i_1} \cdots g_n^{i_n} \notin K_{\bf g}$ for any 
index set $(i_1,\dots,i_n)\in\mathbb Z^n\setminus \{(0,\dots,0)\}$. 
Let $\epsilon>0$.
\begin{enumerate}
\item  Then
$$
N_{\gcd}(F(g_1,\hdots,g_n), G(g_1,\hdots,g_n),r)\le_{\rm exc}  \epsilon\max_{1\le i\le n}\{ T_{g_i}(r)\}.
$$
\label{Mgcda}
\item If $F$ and $G$ are not both identically zero at the origin $(0,\dots,0)$ and the coefficients of $F$ and $G$ are entire functions in $K_{\bf g}$, then
$$
T_{\gcd}(F(g_1,\hdots,g_n), G(g_1,\hdots,g_n),r)\le_{\rm exc}  \epsilon\max_{1\le i\le n}\{ T_{g_i}(r)\}.
$$
\label{Mgcdb}
\end{enumerate}
\end{theorem}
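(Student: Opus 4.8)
The plan is to run the proof of Theorem~\ref{gcdunit} with the field $\CC$ replaced throughout by the small-function field $K_{\bf g}$, and with each appeal to a second main theorem replaced by its moving-target analogue. The first step is the preliminary reduction. As in the proof of Theorem~\ref{gcdunit}, one checks that the hypothesis ${\bf g}^{\bf i}\notin K_{\bf g}$ for all ${\bf i}\in\mathbb Z^n\setminus\{0\}$ forces $g_1,\dots,g_n$ to be algebraically independent over $K_{\bf g}$: a minimal nontrivial relation $\sum_{\bf i}a_{\bf i}{\bf g}^{\bf i}=0$ with $a_{\bf i}\in K_{\bf g}^*$ would, by a moving-target version of Borel's lemma (in place of Lemma~\ref{Borel}), produce ${\bf i}\ne{\bf j}$ with $a_{\bf i}{\bf g}^{\bf i}/(a_{\bf j}{\bf g}^{\bf j})\in K_{\bf g}$, hence ${\bf g}^{{\bf i}-{\bf j}}\in K_{\bf g}$, a contradiction. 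In particular the monomials $\{{\bf g}^{\bf j}:|{\bf j}|=m\}$ are linearly independent over $K_{\bf g}$ for every $m$. Replacing $F,G$ by $F^{\deg G},G^{\deg F}$ and homogenizing in a new variable $x_0$, one reduces to $F,G\in K_{\bf g}[x_0,\dots,x_n]$ coprime homogeneous of a common degree $d$, and by Lemma~\ref{gineq} it then suffices to bound $N_{\gcd}(F({\bf g}),G({\bf g}),r)$ (and, for \ref{Mgcdb}, $m_{\gcd}(F({\bf g}),G({\bf g}),r)$) by $\epsilon\, T_{\bf g}(r)$.

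The core is a moving-target version of Theorem~\ref{Refinement}, which I would establish in the only case needed here, namely when $g_0,\dots,g_n$ have no zeros. Fix $m\ge d$, put $M=\dim_{K_{\bf g}}(F,G)_m$ with a $K_{\bf g}$-basis $\{\phi_1,\dots,\phi_M\}$, and for each $z$ build the greedy ``minimal monomial basis'' $I_z$ of $K_{\bf g}[{\bf x}]_m/(F,G)_m$ exactly as in the proof of Theorem~\ref{Refinement}. Each ${\bf x}^{\bf i}$ with $|{\bf i}|=m$ is then congruent mod $(F,G)_m$ to $-\sum_jc_{{\bf i},j}{\bf x}^{{\bf i}_j}$ with $c_{{\bf i},j}$ ranging over one of finitely many tuples of elements of $K_{\bf g}$, which yields \emph{slowly moving} linear forms $L_{z,{\bf i}}$ obeying the key inequality $\log|L_{z,{\bf i}}(\Phi({\bf g}(z)))|\le\log|{\bf g}(z)^{\bf i}|+\sum_a\log^+|a(z)|+O(1)$, the finite sum taken over the small functions $a\in K_{\bf g}$ occurring. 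Letting $h$ be a gcd of $F({\bf g}),G({\bf g})$ and $\psi_i=\phi_i({\bf g})/h$, the map $\Psi=(\psi_1,\dots,\psi_M):\CC\to\PP^{M-1}$ is linearly nondegenerate over $K_{\bf g}$ (the $\phi_i$ form a $K_{\bf g}$-basis and the degree-$m$ monomials in ${\bf g}$ are $K_{\bf g}$-independent). Applying the moving-target second main theorem (the moving analogue of Theorem~\ref{gsmt}) to $\Psi$ against the targets $L_{z,{\bf i}}$, $|{\bf i}|=m$, ${\bf i}\notin I_z$ — which form a moving basis of $(F,G)_m$, hence are in general position — gives $\int_0^{2\pi}\sum_{{\bf i}\notin I_z}\big(-\log|L_{z,{\bf i}}(\Phi({\bf g}(re^{i\theta})))|+\log\|\Phi({\bf g}(re^{i\theta}))\|\big)\frac{d\theta}{2\pi}\le_{\rm exc}(M+\epsilon)T_\Psi(r)+o(T_{\bf g}(r))$, where the $o$-term absorbs the $\sum_a m_a(\infty,r)$ coming from the slowly moving coefficients and the (nonnegative) Wronskian term is discarded.

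From here the bookkeeping is that of the proof of Theorem~\ref{Refinement}, simplified because the $g_i$ have no zeros: $N_{g_i}(0,r)$ and $N_{\gcd}(\{{\bf g}^{\bf i}\}_{{\bf i}\in I},r)$ are $O(1)$, Jensen gives $\int_0^{2\pi}\log|{\bf g}(re^{i\theta})^{\bf i}|\frac{d\theta}{2\pi}=O(1)$, and $\int_0^{2\pi}\log\min_i|g_i(re^{i\theta})|\frac{d\theta}{2\pi}\ge-nT_{\bf g}(r)+O(1)$. Combined with $N_{\gcd}(F({\bf g}),G({\bf g}),r)+T_\Psi(r)=\int_0^{2\pi}\log\|\Phi({\bf g}(re^{i\theta}))\|\frac{d\theta}{2\pi}+O(1)\le mT_{\bf g}(r)+O(1)$ (so in particular $T_\Psi(r)\le mT_{\bf g}(r)+O(1)$), this produces $MN_{\gcd}(F({\bf g}),G({\bf g}),r)\le_{\rm exc}(M'mn+\epsilon m)T_{\bf g}(r)+o(T_{\bf g}(r))+O(1)$ with $M'=O(m^{n-2})$ and $M=m^n/n!+O(m^{n-1})$; dividing by $M$, letting $m\to\infty$, and applying Lemma~\ref{gineq} gives \ref{Mgcda}. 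For \ref{Mgcdb}, since not both of $F,G$ vanish at the origin I may assume the constant term $a_0$ of $G$ lies in $K_{\bf g}^*$; writing $G({\bf g})=a_0+\sum_{j=1}^\ell a_j{\bf g}^{{\bf i}(j)}$ (entire, as the coefficients are entire and the $g_i$ are entire), I would prove a moving-target analogue of Theorem~\ref{Theorem10.3}(b) by applying the truncated moving-target second main theorem (the moving analogue of Theorem~\ref{tsmt}) to the linearly nondegenerate curve $(1,{\bf g}^{{\bf i}(1)},\dots,{\bf g}^{{\bf i}(\ell)})$ with the $\ell+1$ coordinate hyperplanes and the slowly moving hyperplane $\sum_j a_jx_j$; the monomial counting terms are $O(1)$ because the $g_i$ have no zeros, giving $m_{G({\bf g})}(0,r)\le_{\rm exc}\epsilon\, T_{\bf g}(r)$, hence $m_{\gcd}(F({\bf g}),G({\bf g}),r)\le m_{G({\bf g})}(0,r)\le_{\rm exc}\epsilon\, T_{\bf g}(r)$, and combining with \ref{Mgcda} and Lemma~\ref{gcdfirstmain} finishes the argument.

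I expect the principal obstacle to be the moving-target second main theorem, specifically matching its hypotheses to the situation at hand: one must verify that the $L_{z,{\bf i}}$ are genuinely slowly moving (their coefficient tuples run over a finite subset of $K_{\bf g}$, so this is harmless), that the error term is $o(T_{\bf g}(r))$ uniformly over this finite set of tuples and over the choice of $I_z$ (this also absorbs any poles introduced by the meromorphic coefficients of $F,G$ in part \ref{Mgcda}), and, most delicately, that $\Psi$ is linearly nondegenerate in the precise sense the theorem requires — over $K_{\bf g}$ certainly, but if the theorem demands nondegeneracy over $\Psi$'s own small-function field one also needs $T_\Psi(r)\asymp T_{\bf g}(r)$ so that the two fields agree (for $m$ large this holds, since $(F,G)_m$ defines a generically finite map from $\PP^n$). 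Establishing the moving-target Borel lemma and the moving-target form of Theorem~\ref{Theorem10.3} used above is a milder version of the same issue, and both follow from the moving-target machinery reviewed in this section.
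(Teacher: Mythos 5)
Your proposal follows essentially the same route as the paper: the paper proves a moving-target Key Theorem (Theorem \ref{Mfundamental}) by running the Theorem \ref{Refinement} argument with the slowly moving linear forms $L_{z,{\bf i}}$ and the moving second main theorem (Theorem \ref{movingsmt}), proves the moving Borel lemma (Lemma \ref{Mborel1}) and the moving analogue of Theorem \ref{Theorem10.3} (Theorem \ref{MTheorem10.3}), and then deduces Theorem \ref{Mgcdunit} exactly as in the constant-coefficient case—which is what you do, and your simplifications (discarding the Wronskian and truncation because the $g_i$ are zero-free, handling the $w/u$ factor and the smallness of the moving coefficients, and noting $T_\Psi(r)\asymp T_{\bf g}(r)$ for $m>d$ so the nondegeneracy hypothesis applies) are consistent with what is actually needed. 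The only minor deviation is in part \ref{Mgcdb}: where you invoke a truncated moving-target second main theorem (not stated in the paper), the paper's Theorem \ref{MTheorem10.3} instead absorbs the small entire coefficients into the coordinates of the monomial map (after clearing a common zero by an auxiliary entire function) and applies the fixed-target Cartan theorem (Theorem \ref{tsmt}); since in the zero-free case the monomial counting terms vanish and truncation is unnecessary, the untruncated moving theorem (Theorem \ref{movingsmt}) would also suffice, so this is a cosmetic difference rather than a gap.
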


\begin{theorem}\label{MgcdPn}
Let $g_1,\hdots,g_n$ be  meromorphic functions and ${\bf g}=[1:g_1:\dots:g_n]$ a holomorphic map from $\CC$ to  $\PP^n$.  Let $F,G\in K_{\bf g}[x_1,\hdots,x_n]$ be nonconstant coprime polynomials such that not both of them are identically zero at $(0,\dots,0)$.
If  $g_1^{i_1} \cdots g_n^{i_n} \notin K_{\bf g}$ for any 
index set $(i_1,\dots,i_n)\in\mathbb Z^n\setminus \{(0,\dots,0)\}$,
then for any $\epsilon>0$, there exists $k_0$ such that for $k\ge k_0$
\begin{enumerate}
\item   
$$
N_{\gcd}(F(g_1^k,\hdots,g_n^k), G(g_1^k,\hdots,g_n^k),r)\le_{\rm exc} \epsilon\max_{1\le i\le n}\{ T_{g_i^k}(r)\};
$$
\label{Mkgcda}
\item
$$
T_{\gcd}(F(g_1^k,\hdots,g_n^k), G(g_1^k,\hdots,g_n^k),r)\le_{\rm exc}  \epsilon\max_{1\le i\le n}\{ T_{g_i^k}(r)\},
$$
 if $g_1,\hdots,g_n$ are entire functions  and the coefficients of $F$ and $G$ are entire functions in $K_{\bf g}$.
\label{Mkgcdb}
\end{enumerate}
\end{theorem}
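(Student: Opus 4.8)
The proof of Theorem~\ref{MgcdPn} will run parallel to that of Theorem~\ref{gcdPn}, with each constant-coefficient ingredient replaced by its slowly-moving-target analogue among the results recalled at the beginning of this section: a general second main theorem (the moving-target form of Theorem~\ref{gsmt}) that still carries the Wronskian counting function on the left-hand side, Cartan's truncated second main theorem (the moving-target form of Theorem~\ref{tsmt}) for slowly moving hyperplanes in general position, and moving-target versions of the Borel and Green lemmas (Lemmas~\ref{Borel} and~\ref{greenlemma}), in which the equivalence relation ``$f_i/f_j\in\CC$'' is replaced by ``$f_i/f_j\in K_{\bf g}$''. The one genuinely new feature, compared with Sections~\ref{sKey} and~\ref{sMain}, is that the coefficients of $F$ and $G$ now contribute counting functions of their zeros and poles; since those coefficients lie in $K_{\bf g}$, all such contributions are $o(T_{\bf g}(r))$, and since $T_{{\bf g}^k}(r)=kT_{\bf g}(r)+O(1)$ implies $K_{{\bf g}^k}=K_{\bf g}$, these errors are also $o(T_{{\bf g}^k}(r))$ and will be absorbed into $\epsilon\max_{1\le i\le n}\{T_{g_i^k}(r)\}$ at the very end.

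First I would reduce, exactly as in the proof of Theorem~\ref{gcdPn}, to the case $\deg F=\deg G=d$ with neither $F$ nor $G$ vanishing at the origin (taking $K_{\bf g}$-linear combinations), homogenize to $F_1,G_1\in K_{\bf g}[x_0,\dots,x_n]_d$ each containing $x_0^d$ with nonzero coefficient, write ${\bf g}=[h_0:\cdots:h_n]$ with $h_i$ entire, without common zero, $h_i=g_ih_0$, and observe that the divisor comparison $N_{\gcd}(F(g_1^k,\dots),G(g_1^k,\dots),r)\le N_{\gcd}(F_1({\bf h}^k),G_1({\bf h}^k),r)-N_{\gcd}(\{{\bf h}^{k{\bf i}}\}_{{\bf i}\in I},r)$ is unaffected by moving coefficients. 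Using the moving-target Green lemma together with the hypothesis $g_1^{i_1}\cdots g_n^{i_n}\notin K_{\bf g}$ for ${\bf i}\ne 0$ one gets that $\{(h_0^k)^{i_0}\cdots(h_n^k)^{i_n}:|{\bf i}|=m\}$ is linearly independent over $K_{\bf g}$ (hence over $\CC$) for $k\ge\binom{m+n}{n}^2$; this $K_{\bf g}$-independence is exactly what allows the linear forms in the next step to have coefficients in $K_{\bf g}$. The heart of the matter is a moving-target version of Theorem~\ref{Refinement}: for coprime $F,G\in K_{\bf g}[x_0,\dots,x_n]_d$ the same conclusion holds up to an extra term $o(T_{\bf g}(r))$. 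Its proof copies that of Theorem~\ref{Refinement}: $(F,G)_m$ is now a $K_{\bf g}$-subspace of $K_{\bf g}[x_0,\dots,x_n]_m$ of the same dimension $M$, a monomial $K_{\bf g}$-basis of the quotient $V_m$ is still one of finitely many possibilities, and the greedy pointwise choice of $I_z$ minimizing $\max|{\bf g}(z)^{\bf i}|$ produces linear forms $L_{z,{\bf i}}$ with coefficients in $K_{\bf g}$, finitely many as $z$ varies. Applying the moving-target general second main theorem to (a reduced representation of) $(\phi_1({\bf g}),\dots,\phi_M({\bf g}))$ with these $L_{z,{\bf i}}$ and using the key inequality~\eqref{keyinequality2} yields the analogue of~\eqref{maininequality}; Lemma~\ref{mainlemma} and the local Wronskian estimate then go through verbatim, the only change being that the trailing monomial ${\rm TM}_{{\bf g}(z)}(F_2)$ is the generic one except at the finitely many zeros and poles of the coefficients of $F$ and $G$, whose total contribution to the counting functions is $o(T_{\bf g}(r))$.

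Next I would prove the moving-target analogue of Theorem~\ref{Theorem10.3}: for $G\in K_{\bf g}[x_1,\dots,x_n]$ with $G(0,\dots,0)\ne 0$ (and, for the $T_{\gcd}$-statement, entire coefficients), $m_{G(g_1^k,\dots,g_n^k)}(0,r)\le_{\rm exc}\epsilon\max_i\{T_{g_i^k}(r)\}$ for $k$ large. Writing $G=a_{{\bf i}(0)}+\sum_{j=1}^\ell a_{{\bf i}(j)}{\bf x}^{{\bf i}(j)}$ with $a_{{\bf i}(j)}\in K_{\bf g}^*$, one applies the moving-target truncated Cartan theorem to $\frak g(k)=(1,{\bf g}^{k{\bf i}(1)},\dots,{\bf g}^{k{\bf i}(\ell)})$ with the $\ell+1$ coordinate hyperplanes together with the moving hyperplane $\sum_j a_{{\bf i}(j)}(z)X_j$; these are in general position because a nontrivial dependence would, via the moving-target Borel/Green lemma and the hypothesis, again force a monomial in $g_1,\dots,g_n$ to lie in $K_{\bf g}$. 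The truncation level $\ell$ and the factor $1/k$ absorb the terms $N^{(\ell)}_{{\bf g}^{k{\bf i}(j)}}(0,r)$ just as before. With all of this, the assembly mimics the proof of Theorem~\ref{gcdPn}: choose $m$ large (depending on $\epsilon,n,d$) so that the leading term divided by $M$ and all $O(m^{n-1})/M$ terms are small, then choose $k$ large (depending on $m$) so that $\tfrac{L}{k}\sum_i N_{h_i^k}(0,r)$ and the $o(T_{\bf g}(r))$ error become small multiples of $T_{{\bf g}^k}(r)$, and finally apply $T_{{\bf g}^k}(r)\le n\max_i\{T_{g_i^k}(r)\}+O(1)$ (Lemma~\ref{gineq} for $g_1^k,\dots,g_n^k$) to obtain part~\ref{Mkgcda}. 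For part~\ref{Mkgcdb}, when the $g_i$ and the coefficients of $F,G$ are entire, $G(g_1^k,\dots,g_n^k)$ is entire, $m_{\gcd}(F(g_1^k,\dots),G(g_1^k,\dots),r)\le m_{G(g_1^k,\dots)}(0,r)$, and the moving-target Theorem~\ref{Theorem10.3} together with Lemma~\ref{gcdfirstmain} upgrades the $N_{\gcd}$ bound to the $T_{\gcd}$ bound.

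The main obstacle I anticipate is entirely at the level of the second main theorem inputs: one needs a moving-target general second main theorem that genuinely retains the Wronskian counting function on the left (this is what gets combined with the trailing-monomial estimates of Lemma~\ref{mainlemma}) and a moving-target truncated Cartan theorem whose truncation level depends on the dimension alone, and one must check that the particular moving hyperplanes produced above satisfy whatever general-position or coherence hypotheses those theorems impose. The remainder — verifying that all discrepancies caused by zeros and poles of the small-function coefficients of $F$ and $G$ aggregate to $o(T_{\bf g}(r))=o(\max_i T_{g_i^k}(r))$, and that $K_{\bf g}$ is stable under the reductions (in particular $K_{{\bf g}^k}=K_{\bf g}$) — is routine bookkeeping.
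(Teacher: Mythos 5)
Your outline is essentially the paper's own route: the paper does not write out a separate proof of this theorem, but states that it follows the constant-coefficient argument for Theorem~\ref{gcdPn} once the moving-target tools of Section~\ref{moving} are in place, and those tools are precisely the ones you identify --- the key theorem with small-function coefficients (Theorem~\ref{Mfundamental}, the analogue of Theorem~\ref{Refinement}, proved with the Wronskian-retaining moving second main theorem, Theorem~\ref{movingsmt}, which is obtained by Steinmetz's trick of multiplying by a basis of the space $V(t)$ generated by products of the coefficients), the moving proximity estimate (Theorem~\ref{MTheorem10.3}), and the moving Borel/Green lemmas (Lemmas~\ref{Mborel1} and~\ref{Mgreen1}); the reduction to homogeneous $F_1,G_1$, the comparison of counting functions via ${\bf h}^k$, the choice of $m$ then $k$, and the passage from $N_{\gcd}$ to $T_{\gcd}$ via Lemma~\ref{gcdfirstmain} all match. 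The one genuine divergence is in your proof of the moving analogue of Theorem~\ref{Theorem10.3}: you apply a \emph{moving-target truncated} Cartan theorem to $\frak g(k)=(1,{\bf g}^{k{\bf i}(1)},\hdots,{\bf g}^{k{\bf i}(\ell)})$ with the moving hyperplane $\sum_j a_{{\bf i}(j)}X_j$, which requires a truncated second main theorem for moving targets with truncation level depending only on the dimension --- a much heavier input that you yourself flag as uncertain. The paper sidesteps this entirely: it absorbs the small coefficients into the map, applying the ordinary Theorem~\ref{tsmt} to $(h^{-1}a_{{\bf i}(0)}, h^{-1}a_{{\bf i}(1)}{\bf g}^{k{\bf i}(1)},\hdots,h^{-1}a_{{\bf i}(\ell)}{\bf g}^{k{\bf i}(\ell)})$ with \emph{constant} hyperplanes (the coordinate and diagonal ones), the extra zeros and poles of the coefficients contributing only ${\rm o}(T_{\bf g}(r))$; this buys the same conclusion from the classical truncated second main theorem, with linear nondegeneracy over $\CC$ supplied by $K_{\bf g}$-independence of the monomials. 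Similarly, your other anticipated obstacle (a general moving second main theorem keeping the Wronskian counting function) is exactly what Theorem~\ref{movingsmt} provides, so with that substitution your argument aligns with the paper's.
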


Let $g_1,\hdots,g_n$ be entire functions and ${\bf g}=(1,g_1,\dots,g_n)$.
Let $G\in K_{\bf g}[x_1, \dots,x_n ]$ be a nonconstant polynomial  such that $G(0,\dots,0)$ is not identically zero. Since $G$ has a non-zero constant term, after arranging the index set in some order, we may write  
$$
G=a_{{\bf i}(0)}+\sum_{j=1}^{\ell}a_{{\bf i}(j)}{\bf x}^{{\bf i}(j)}\in K_{\bf g}[x_1, \dots,x_n ],
$$      
where $a_{{\bf i}(j)}\ne 0$ for $0\le j\le \ell$.  We then let 
$\frak g_G:=(1,{\bf g}^{{\bf i}(1)},\hdots,{\bf g}^{{\bf i}(\ell)}):\CC\to \mathbb P^{\ell}.$ 
 \begin{corollary}\label{Mdivisibility}
 Let $g_1,\hdots,g_n$ be entire functions and ${\bf g}=(1,g_1,\dots,g_n)$. 
 Assume that $T_{\frak g_G}(r)\asymp  \max_{1\le i\le n}\{ T_{g_i}(r)\}$.
Let $F,\, G\in K_{\bf g}[x_1, \dots,x_n ]$ be nonconstant coprime  polynomials  with coefficients that are entire functions.  Assume that $G(0,\dots,0)$ is not identically zero.
 
\begin{enumerate}
\item If $F(g_1^k,\hdots,g_n^k)/G(g_1^k,\hdots,g_n^k)$ are entire functions for infinitely many positive integers $k$; or
\item $g_1,\hdots,g_n$ are entire functions without zeros and $F(g_1,\hdots,g_n)/G(g_1,\hdots,g_n)$ is an entire function,
\end{enumerate}
then
there exists an index set $(i_1,\dots,i_n)\in\mathbb Z^n\setminus \{(0,\dots,0)\}$ such that $g_1^{i_1} \cdots g_n^{i_n} \in K_{\bf g}$.
\end{corollary}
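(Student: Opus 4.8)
The plan is to deduce Corollary~\ref{Mdivisibility} from Theorem~\ref{MgcdPn} and Theorem~\ref{Mgcdunit} in exactly the way Corollary~\ref{divisibility} was deduced from Theorem~\ref{gcdPn}, Theorem~\ref{gcdunit} and Theorem~\ref{Theorem10.3}; the only new ingredient is a moving-target analogue of Theorem~\ref{Theorem10.3}, which is in any case the natural tool for parts~(b) of Theorems~\ref{Mgcdunit} and~\ref{MgcdPn}. So I would argue by contradiction: assume $g_1^{i_1}\cdots g_n^{i_n}\notin K_{\bf g}$ for every $(i_1,\dots,i_n)\in\mathbb Z^n\setminus\{(0,\dots,0)\}$. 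Then each $g_i$ is nonconstant, so $T_{\bf g}(r)\to\infty$, $\CC\subset K_{\bf g}$, and $\max_{1\le i\le n}\{T_{g_i}(r)\}\to\infty$. Since the coefficients of $F$, $G$ and the functions $g_1,\dots,g_n$ are entire, $F(g_1^k,\dots,g_n^k)$ and $G(g_1^k,\dots,g_n^k)$ are entire, and if $G(g_1^k,\dots,g_n^k)$ divides $F(g_1^k,\dots,g_n^k)$ then $v_z^+(G(g_1^k,\dots,g_n^k))\le v_z^+(F(g_1^k,\dots,g_n^k))$ for all $z$, so
\[
N_{G(g_1^k,\dots,g_n^k)}(0,r)=N_{\gcd}(F(g_1^k,\dots,g_n^k),G(g_1^k,\dots,g_n^k),r).
\]
Theorem~\ref{MgcdPn}(a) in case~(a) (resp.\ Theorem~\ref{Mgcdunit}(a) in case~(b)) then gives, for each $\epsilon>0$, $N_{G(g_1^k,\dots,g_n^k)}(0,r)\le_{\rm exc}\epsilon\max_{1\le i\le n}\{T_{g_i^k}(r)\}$ for all sufficiently large $k$ in the relevant infinite set (resp.\ for $k=1$).

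For the matching lower bound I would rerun the proof of Theorem~\ref{Theorem10.3} with the moving-target form of Cartan's (truncated) second main theorem. Using $G(0,\dots,0)\not\equiv 0$, write $G=a_{{\bf i}(0)}+\sum_{j=1}^{\ell}a_{{\bf i}(j)}{\bf x}^{{\bf i}(j)}$ with ${\bf i}(0)=(0,\dots,0)$ and $a_{{\bf i}(j)}\in K_{\bf g}$ nonzero, so $G(g_1^k,\dots,g_n^k)=\sum_{j=0}^{\ell}a_{{\bf i}(j)}{\bf g}^{k{\bf i}(j)}$. As the $g_i$ are entire, $(1,{\bf g}^{k{\bf i}(1)},\dots,{\bf g}^{k{\bf i}(\ell)})$ is a reduced representation of $\frak g(k):=[1:{\bf g}^{k{\bf i}(1)}:\cdots:{\bf g}^{k{\bf i}(\ell)}]$, and since $\max\{1,|{\bf g}^{k{\bf i}(1)}|,\dots,|{\bf g}^{k{\bf i}(\ell)}|\}=\big(\max\{1,|{\bf g}^{{\bf i}(1)}|,\dots,|{\bf g}^{{\bf i}(\ell)}|\}\big)^k$ pointwise, $T_{\frak g(k)}(r)=kT_{\frak g_G}(r)+O(1)$. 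I would apply the moving-target truncated Cartan theorem to $\frak g(k)$ with the $\ell+1$ coordinate hyperplanes of $\PP^{\ell}$ and the moving hyperplane $\sum_{j=0}^{\ell}a_{{\bf i}(j)}X_j$ (these $\ell+2$ hyperplanes are in general position since all $a_{{\bf i}(j)}\ne 0$); using that $X_0=0$ does not meet $\frak g(k)$, that the moving hyperplane contributes (up to a small function) the zeros of $G(g_1^k,\dots,g_n^k)$, and that $N^{(\ell)}_{{\bf g}^{k{\bf i}(j)}}(0,r)\le\frac{\ell}{k}T_{\frak g(k)}(r)+O(1)$ as in the proof of Theorem~\ref{Theorem10.3}, this yields $(1-\epsilon)T_{\frak g(k)}(r)\le_{\rm exc} N_{G(g_1^k,\dots,g_n^k)}(0,r)$ for all large $k$ --- the moving analogue of the first inequality in~(\ref{usetsmta}). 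In case~(b) the same is applied to $\frak g_G$ with $k=1$, the terms $N_{{\bf g}^{{\bf i}(j)}}(0,r)$ being $O(1)$ since the $g_i$ have no zeros.

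Combining the upper and lower bounds and dividing by $k$, using $T_{\frak g(k)}(r)=kT_{\frak g_G}(r)+O(1)$ and $T_{g_i^k}(r)=kT_{g_i}(r)+O(1)$, I would obtain $T_{\frak g_G}(r)\le_{\rm exc}\frac{\epsilon}{1-\epsilon}\max_{1\le i\le n}\{T_{g_i}(r)\}+O(1)$ for every $\epsilon>0$; since $\max_i T_{g_i}(r)\to\infty$, this is incompatible with $T_{\frak g_G}(r)\asymp\max_i\{T_{g_i}(r)\}$, and the contradiction proves the corollary.

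I expect the lower-bound step to be the main obstacle, precisely because it requires checking the hypotheses of the moving-target second main theorem. Two points need care. First, $\frak g(k)$ must be linearly nondegenerate over the field generated over $\CC$ by the coefficients of $G$ (a subfield of $K_{\bf g}$) for all large $k$: a $K_{\bf g}$-linear relation among $1,{\bf g}^{k{\bf i}(1)},\dots,{\bf g}^{k{\bf i}(\ell)}$ would, via the moving-target analogues of Borel's and Green's lemmas (Lemma~\ref{Borel}, Lemma~\ref{greenlemma}) used to prove Theorems~\ref{Mgcdunit} and~\ref{MgcdPn}, force ${\bf g}^{{\bf i}(j)-{\bf i}(j')}\in K_{\bf g}$ for some $j\ne j'$, contradicting the standing assumption. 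Second, the $a_{{\bf i}(j)}$ must be small with respect to $\frak g(k)$; this is exactly where the growth hypothesis $T_{\frak g_G}(r)\asymp\max_i\{T_{g_i}(r)\}$ is used \emph{inside} the argument (and not only at its end), since it gives $T_{\frak g(k)}(r)=kT_{\frak g_G}(r)+O(1)\gg T_{\bf g}(r)$, hence $T_a(r)={\rm o}(T_{\bf g}(r))={\rm o}(T_{\frak g(k)}(r))$ for every $a\in K_{\bf g}$.
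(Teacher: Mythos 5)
Your overall strategy is the one the paper intends (its proof of Corollary \ref{Mdivisibility} is omitted precisely because it repeats the proof of Corollary \ref{divisibility} with the Section \ref{moving} tools): argue by contradiction, use Theorem \ref{MgcdPn}\ref{Mkgcda} in case (a) (resp.\ Theorem \ref{Mgcdunit}\ref{Mgcda} in case (b)) together with the divisibility hypothesis to bound $N_{G(g_1^k,\ldots,g_n^k)}(0,r)=N_{\gcd}(F(g_1^k,\ldots,g_n^k),G(g_1^k,\ldots,g_n^k),r)$ from above, bound the same quantity from below by $(1-\epsilon)T_{\frak g(k)}(r)$ up to ${\rm o}(T_{\bf g}(r))$, and contradict $T_{\frak g_G}(r)\asymp\max_i\{T_{g_i}(r)\}$ after dividing by $k$. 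Your identifications $N_{G(g_1^k,\ldots,g_n^k)}(0,r)=N_{\gcd}(\cdot,\cdot,r)$ under divisibility and $T_{\frak g(k)}(r)=kT_{\frak g_G}(r)+O(1)$, and your use of the moving Borel/Green lemmas (Lemmas \ref{Mborel1} and \ref{Mgreen1}) for nondegeneracy, are all correct.

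The one step that does not go through as written is the lower bound: you invoke a ``moving-target truncated Cartan second main theorem'' applied to $\frak g(k)$ with the coordinate hyperplanes and the moving hyperplane $\sum_{j}a_{{\bf i}(j)}X_j$. No such theorem is among the paper's tools --- Theorem \ref{movingsmt} carries a Wronskian term but no truncated counting functions --- and a moving-target second main theorem with the strength you need here (a positive coefficient with only $\ell+2$ hyperplanes in $\PP^\ell$, i.e.\ the Cartan coefficient $q-n-1-\varepsilon$, together with a truncation level independent of $k$) cannot simply be quoted without justification. This is exactly the gap that Theorem \ref{MTheorem10.3} is designed to fill: in its proof the (entire, small) coefficients are absorbed into the map, i.e.\ one applies the constant-target Theorem \ref{tsmt} to $(h^{-1}a_{{\bf i}(0)},h^{-1}a_{{\bf i}(1)}{\bf g}^{k{\bf i}(1)},\ldots,h^{-1}a_{{\bf i}(\ell)}{\bf g}^{k{\bf i}(\ell)})$ with fixed hyperplanes, the auxiliary entire function $h$ and the coefficients contributing only ${\rm o}(T_{\bf g}(r))$; this yields precisely the inequality you want, namely (\ref{Musetsmta}), $(1-\epsilon)T_{\frak g(k)}(r)\le_{\rm exc} N_{G(g_1^k,\ldots,g_n^k)}(0,r)+{\rm o}(T_{\bf g}(r))$ for all large $k$ (and its $k=1$, zero-free variant for part (b)). Replacing your appeal to a moving truncated second main theorem by a citation of (the proof of) Theorem \ref{MTheorem10.3} makes your argument coincide with the intended one and completes it; incidentally, it also removes the need to verify smallness of the $a_{{\bf i}(j)}$ relative to $\frak g(k)$, so that the growth hypothesis is used only at the final step, exactly as in the constant case.
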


 \subsection{Nevanlinna Theory with Moving Targets}
Let ${\bf f}=(f_0,\dots,f_n)$ be a holomorphic map from $\CC$ to  $\PP^n$ where $f_0,f_1,\hdots,f_n$ are  holomorphic functions without a common zero.  
Let $a_{0}, \cdots,a_{n} \in K_{\bf f}$, and      
let $L:=a_{0} X_0+\dots+a_{n} X_n$.  
Then  $L$ defines a  hyperplane $H$ in $\PP^n(K_{\bf f})$.  We note that  $H(z)$ is the hyperplane determined by the linear form $L(z) =a_{0}(z) X_0+\dots+a_{n}(z) X_n$   for $z\in\CC$ that is not a common zero of $a_{0}, \cdots,a_{n}$, or a pole of any $a_{k}$, $0\le k\le n$.
The definition of the Weil function, proximity function and counting function can be easily extended to moving hyperplanes.  
For example,
\begin{align*}
    \lambda_{H(z)}(P)=-\log \frac{|(ha_{0})(z)x_0+\cdots+(ha_{n})(z)x_n|}{\max\{|x_0|,\dots ,|x_n|\}\max\{|(ha_{0})(z)|,\dots ,|(ha_{n})(z)|\}},
\end{align*}
where $h$ is a meromorphic function such that  $ha_{0}, \cdots,ha_{n} $ are entire functions without common zeros, $P=(x_0,\hdots,x_n)\in \PP^n(\CC)$ and $z\in\CC$.
It's clear that 
\begin{align}
    \lambda_{H(z)}(P)=-\log \frac{| a_{0} (z)x_0+\cdots+ a_{n} (z)x_n|}{\max\{|x_0|,\dots ,|x_n|\}\max\{| a_{0} (z)|,\dots ,| a_{n} (z)|\}},
\end{align}
for $z\in\CC$ which is not a a common zero of $a_{0}, \cdots,a_{n}$, or a pole of any $a_{k}$, $0\le k\le n$.
The first main theorem for a moving hyperplane $H$  can be stated as
\begin{equation}\label{fmtmov}
    T_{\mathbf{f}}(r) =N_{\mathbf{f}}(H,r)+m_{\mathbf{f}}(H,r)+{\rm o} (T_{\bf f}(r)).
\end{equation}

We will reformulate the second main theorem with moving targets stated in \cite[Theorem A4.2.1]{ru2001nevanlinna} to suit our purpose.
Let $a_{j0}, \ldots,a_{jn} \in K_{\bf f}$, and      
let $L_j:=a_{j0} X_0+\dots+a_{jn} X_n$.  
Without loss of generality, we will normalize the linear forms $L_j$, $1\le j\le q$, such that for each $1\le j\le q$, there exists $0\le j'\le n$ such that $a_{jj'}=1$.  Let $t$ be a positive integer and let $V(t)$ be the complex vector space spanned by the elements
\begin{align*}
\left\{ \prod a_{jk}^{n_{jk}} : n_{jk}\ge 0,\,\sum n_{jk}=t \right\},
\end{align*}
where the product and sum runs over $1\le j\le q$ and $0\le k\le n$.
Let $1=b_1,\cdots,b_u$ be a basis of $V(t)$ and $b_1,\cdots,b_w$ a basis of $V(t+1)$.  It's clear that $u\le w$.
Moreover, we have \cite[Lemma 6]{Wang}
\begin{align}
 \liminf_{t\to\infty}\dim V(t+1)/\dim V(t)=1. 
\end{align}
The following formulation of the second main theorem with moving targets follows from the proof of \cite[Theorem A4.2.1]{ru2001nevanlinna} by adding the Wronskian term when applying the second main theorem.
\begin{theorem}
\label{movingsmt}
    Let ${\bf f}=(f_0,\hdots,f_n):\mathbb{C}\to\mathbb{P}^n(\mathbb{C})$ be a holomorphic curve  where $f_0,f_1,\hdots,f_n$ are  entire functions without common zero.  Let $H_j$, $1\le j\le q$, be arbitrary (moving) hyperplanes given by $L_j:=a_{j0} X_0+\dots+a_{jn} X_n$ where $a_{j0}, \cdots,a_{jn}\in K_{\bf f}$.  Denote by $W$ the Wronskian of $\{hb_mf_k\,|\, 1\le m\le w,\,  0\le k\le n\}$, where $h$ is a meromorphic function such that $hb_1,\hdots, hb_w$ are entire functions without common zero.  If ${\bf f}$ is linearly non-degenerate over $K_{\bf f}$, then for any $\varepsilon>0$, we have the following inequality:
    $$ \int_0^{2\pi} \max_J \sum_{k\in J}\lambda_{H_k(re^{i\theta})}({\bf f}(re^{i\theta}))\frac{d\theta}{2\pi}+\frac 1uN_{W}(0,r)\leq_{\operatorname{exc}} \left(\frac wu(n+1)+\varepsilon\right)T_{\bf f}(r)+o(T_{\bf f}(r)),$$  where the maximum is taken over all subsets $J$ of $\{1,\dots, q\}$ such that $H_j(re^{i\theta})$, $j\in J$, are in general position.
\end{theorem}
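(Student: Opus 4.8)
The plan is to deduce Theorem~\ref{movingsmt} from the fixed-target second main theorem that retains the Wronskian term, namely Theorem~\ref{gsmt}, by running the Steinmetz--Ru--Stoll argument underlying \cite[Theorem A4.2.1]{ru2001nevanlinna} and simply declining to discard the Wronskian at the step where the fixed-target theorem is invoked. Thus the only genuinely new ingredient compared to the classical (truncated) moving-target theorem is the \emph{choice} of which fixed-target inequality to feed the auxiliary curve into; everything else is a matter of faithful bookkeeping.

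First I would build the auxiliary curve. After normalizing the forms $L_j=a_{j0}X_0+\cdots+a_{jn}X_n$ so that each has a coefficient equal to $1$ (so that $1\in V(t)$ for every $t$), fix $t$ and bases $1=b_1,\dots,b_u$ of $V(t)$ and $1=b_1,\dots,b_w$ of $V(t+1)$ with $u\le w$. Choose $h$ meromorphic with $hb_1,\dots,hb_w$ entire and without common zero, and set
$$
\tilde{\bf f}:=(hb_mf_k)_{1\le m\le w,\ 0\le k\le n}:\CC\to\PP^{w(n+1)-1}.
$$
Since the $f_k$ have no common zero and the $hb_m$ have no common zero, neither do the coordinates of $\tilde{\bf f}$. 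A $\CC$-linear relation among the $hb_mf_k$ descends to $\sum_k\beta_kf_k=0$ with $\beta_k\in V(t+1)\subseteq K_{\bf f}$; linear non-degeneracy of ${\bf f}$ over $K_{\bf f}$ forces $\beta_k=0$, and then $\CC$-independence of $b_1,\dots,b_w$ forces all coefficients to vanish, so $\tilde{\bf f}$ is linearly non-degenerate over $\CC$ in $\PP^{w(n+1)-1}$. Next, for $1\le i\le u$ the coefficients $b_ia_{jk}$ of $b_iL_j$ lie in $V(t)\cdot V(1)\subseteq V(t+1)$, so $b_ia_{jk}=\sum_m c_{jikm}b_m$ for constants $c_{jikm}\in\CC$; hence each $b_iL_j$ corresponds to a genuine constant linear form $\tilde L_{ji}$ on $\PP^{w(n+1)-1}$ with $\tilde L_{ji}(\tilde{\bf f})=h\,b_i\,L_j({\bf f})$.

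Then I would compare Weil functions. From $\|\tilde{\bf f}(z)\|=|h(z)|\,(\max_m|b_m(z)|)\,\|{\bf f}(z)\|$ one gets, pointwise and up to $O(1)$,
$$
\sum_{i=1}^u\lambda_{\tilde L_{ji}}(\tilde{\bf f}(z))=u\,\lambda_{H_j(z)}({\bf f}(z))-\sum_{i=1}^u\log|b_i(z)|+u\log\max_m|b_m(z)|-u\log\max_k|a_{jk}(z)|.
$$
Integrating and using Jensen's formula (Lemma~\ref{Jensen}) together with $T_{b_m}(r),T_{a_{jk}}(r)=o(T_{\bf f}(r))$, the error collapses to $o(T_{\bf f}(r))$, and likewise $T_{\tilde{\bf f}}(r)=T_{\bf f}(r)+o(T_{\bf f}(r))$. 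The combinatorial general-position lemma in the proof of \cite[Theorem A4.2.1]{ru2001nevanlinna} ensures that whenever $\{H_j(z)\}_{j\in J}$ is in general position, the corresponding family among the $\tilde L_{ji}$ is in general position in $\PP^{w(n+1)-1}$, so $\max_J\sum_{j\in J}u\,\lambda_{H_j(z)}({\bf f}(z))$ is dominated, up to the above $o(T_{\bf f}(r))$, by the corresponding maximum over general-position subsets of $\{\tilde L_{ji}\}$. Applying Theorem~\ref{gsmt} to $\tilde{\bf f}$ with the fixed hyperplanes $\{\tilde L_{ji}\}$, and observing that the Wronskian of the reduced representation $(hb_mf_k)$ of $\tilde{\bf f}$ is exactly the $W$ of the statement, the ambient dimension $w(n+1)-1$ produces a leading factor $w(n+1)$; dividing the resulting inequality by $u$, absorbing the $\varepsilon$-term, and substituting the two comparisons above yields precisely the claimed bound, with the Wronskian contribution appearing as $\tfrac1uN_W(0,r)$ and the constant as $\tfrac wu(n+1)$. (Since $\liminf_{t\to\infty}w/u=1$, choosing $t$ large recovers the constant $n+1$ in applications.)

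The step I expect to be the real obstacle is not conceptual but one of faithful bookkeeping: one must verify that the ambient dimension $w(n+1)-1$, the averaging of each $L_j$ over the $u$-dimensional space $V(t)$, and the single Wronskian $W$ of the $w(n+1)$ functions $hb_mf_k$ combine to give exactly the constants $\tfrac wu(n+1)$ and $\tfrac1u$, and that the general-position lemma borrowed from \cite{ru2001nevanlinna} is genuinely insensitive to retaining the Wronskian (it is, since it concerns only the hyperplanes and not the derivatives of $\tilde{\bf f}$). All remaining estimates are the routine absorption of small-function terms into $o(T_{\bf f}(r))$ via Lemma~\ref{Jensen} and the first main theorem.
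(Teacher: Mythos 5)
Your overall route is exactly the one the paper intends: the paper's own ``proof'' of Theorem \ref{movingsmt} is the single remark that it follows from Ru's proof of Theorem A4.2.1 by keeping the Wronskian term when the fixed-target second main theorem is applied, and your construction of the auxiliary curve $\tilde{\bf f}=(hb_mf_k)$ into $\PP^{w(n+1)-1}$, the verification that it is linearly non-degenerate over $\CC$, the passage from $b_iL_j$ to constant forms $\tilde L_{ji}$, the comparisons $T_{\tilde{\bf f}}(r)=T_{\bf f}(r)+o(T_{\bf f}(r))$ and $\sum_{i=1}^u\lambda_{\tilde L_{ji}}(\tilde{\bf f})=u\lambda_{H_j(z)}({\bf f})+o(T_{\bf f}(r))$ after integration, and the bookkeeping that produces the constants $\frac{w}{u}(n+1)$ and $\frac1u N_W(0,r)$ are all carried out correctly and match the intended argument.

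There is, however, one asserted step that is false as stated: general position of $\{H_j(z)\}_{j\in J}$ does \emph{not} imply general position of $\{\tilde L_{ji}:j\in J,\,1\le i\le u\}$ in $\PP^{w(n+1)-1}$ for arbitrary general-position subsets $J$. Concretely, take $n=1$, $L_1=X_0+aX_1$, $L_2=X_0-aX_1$, $L_3=X_0$ with $a$ a nonconstant small function, and $t=1$, so $b_1=1,b_2=a$, $b_3=a^2$, $u=2$, $w=3$: for any $z$ with $a(z)\neq0$ the three evaluated hyperplanes are in general position, yet the six forms $\tilde L_{ji}$ involve only the four coordinates corresponding to $f_0,\ af_0,\ af_1,\ a^2f_1$, hence are linearly dependent and not in general position in $\PP^5$. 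The transfer is valid precisely when $|J|\le n+1$: a $\CC$-relation $\sum_{j,i}c_{ji}\tilde L_{ji}=0$ is equivalent to a relation $\sum_{j\in J}\beta_jL_j=0$ with $\beta_j=\sum_i c_{ji}b_i\in V(t)\subset K_{\bf f}$, and general position of $\{H_j(z)\}_{j\in J}$ at a single point $z$ (clear poles and common zeros of the $\beta_j$ at $z$) already forces $K_{\bf f}$-independence of the $L_j$, so the $u|J|\le w(n+1)$ forms $\tilde L_{ji}$ are independent, hence in general position. To obtain the theorem as stated you must still treat general-position $J$ with $|J|>n+1$; the standard remedy is to observe that for any $n+1$ indices $j_0,\dots,j_n\in J$ one has $\min_{s}\lambda_{H_{j_s}(z)}({\bf f}(z))\le\log\left(\prod_s\max_k|a_{j_sk}(z)|\,/\,|\det(a_{j_sk}(z))|\right)+O(1)$, and since each such determinant is a nonzero element of $K_{\bf f}$ (nonzero because it does not vanish at $z$) these bounds integrate to $o(T_{\bf f}(r))$, which reduces the maximum over arbitrary general-position $J$ to the maximum over $|J|\le n+1$ up to $o(T_{\bf f}(r))$. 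With that repair your argument closes; as written, the appeal to a ``general-position lemma'' in the form you use it is not available.
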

 
The following two lemmas are moving targets versions of the Borel Lemma and Green's theorem. 
 
\begin{lemma}
    \label{Mborel1}
    Let $f_0,\hdots,f_n$ be entire functions with no zeros and  
$\mathbf{f}:=(f_0,\hdots,f_{n})$ 
be a holomorphic map from $\CC$ to $\PP^{n}(\CC)$.  
Suppose that $f_0,\hdots,f_n$  are linearly dependent over $K_{\bf f}$. Then for each $f_i$, there exists $j\ne i$ such that   $ f_i/f_j \in K_{\bf f}$.
\end{lemma}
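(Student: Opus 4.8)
\emph{The plan} is to reduce to a minimal dependence relation and then extract a second relation by differentiation, carrying out the entire argument over the single field $K_{\bf f}$. We may assume ${\bf f}$ is nonconstant, so $T_{\bf f}(r)\to\infty$ and $K_{\bf f}$ is a differential field. By hypothesis there is a nontrivial relation $\sum_i a_if_i=0$ with $a_i\in K_{\bf f}$; among all such, choose one with the fewest nonzero coefficients and relabel so that it reads $\sum_{i=0}^{s}a_if_i=0$ with every $a_i\in K_{\bf f}\setminus\{0\}$ and $s\ge 1$. Minimality then forces that no proper subfamily of $f_0,\dots,f_s$ is $K_{\bf f}$-linearly dependent. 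It suffices to prove that $f_i/f_j\in K_{\bf f}$ for all $0\le i,j\le s$, for then each such $f_i$ has the required partner $f_j$ with $j\ne i$; in the applications one first passes to a minimal relation, so that $\{f_0,\dots,f_s\}$ is the whole family $\{f_0,\dots,f_n\}$.

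\emph{A second relation.} For $0\le i\le s-1$ put $g_i:=f_i/f_s$. Since $f_s$ and every $f_i$ are entire and without zeros, each $g_i$ is an entire function without zeros, hence either constant or transcendental; in either case $g_i'/g_i$ is entire, and the lemma on logarithmic derivatives gives $T_{g_i'/g_i}(r)=m_{g_i'/g_i}(\infty,r)=\mathrm{o}(T_{g_i}(r))$. Since $T_{g_i}(r)=T_{[f_i:f_s]}(r)+O(1)\le T_{\bf f}(r)+O(1)$, it follows that $g_i'/g_i\in K_{\bf f}$, and hence $b_i:=a_i'+a_i\,g_i'/g_i\in K_{\bf f}$. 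Dividing the relation by $f_s$ gives $\sum_{i=0}^{s-1}a_ig_i=-a_s$, and differentiating this identity gives the second relation $\sum_{i=0}^{s-1}b_ig_i=-a_s'$.

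\emph{Elimination and conclusion.} The combination $a_s'\big(\sum a_ig_i+a_s\big)-a_s\big(\sum b_ig_i+a_s'\big)$ cancels the constant term and leaves $\sum_{i=0}^{s-1}(a_s'a_i-a_sb_i)g_i=0$; multiplying by $f_s$ produces a $K_{\bf f}$-linear relation $\sum_{i=0}^{s-1}(a_s'a_i-a_sb_i)f_i=0$ among the proper subfamily $f_0,\dots,f_{s-1}$, which by the reduction step is trivial: $a_s'a_i=a_sb_i$ for all $i$. Substituting $b_i=a_i'+a_i(g_i'/g_i)$ and dividing by $a_i$ yields $g_i'/g_i=a_s'/a_s-a_i'/a_i$, so the logarithmic derivative of $a_ig_i/a_s$ vanishes and $a_ig_i/a_s$ is a nonzero constant $c_i$. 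Therefore $g_i=c_ia_s/a_i\in K_{\bf f}$, i.e.\ $f_i/f_s\in K_{\bf f}$, and finally $f_i/f_j=g_i/g_j\in K_{\bf f}$ for all $0\le i,j\le s$.

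\emph{Expected main difficulty.} The delicate point is the bookkeeping of fields: because the small functions $a_i,a_i',g_i'/g_i$ are controlled only relative to $T_{\bf f}$, one must not pass to the subtuple $(f_i)_{i\le s}$, whose characteristic function can be strictly smaller; performing the whole computation over $K_{\bf f}$ and invoking minimality of the chosen relation---as a $K_{\bf f}$-linear dependence---to force triviality in the elimination step is exactly what makes the argument go through. One must also check the routine facts that $K_{\bf f}$ is closed under differentiation and that $T_{[f_i:f_j]}\le T_{\bf f}+O(1)$. An alternative, valid when $\{f_0,\dots,f_n\}$ is itself minimally dependent, deduces the conclusion from the moving-target second main theorem (Theorem~\ref{movingsmt}) applied to $[f_0:\cdots:f_{s-1}]:\CC\to\PP^{s-1}$ with the $s+1$ moving hyperplanes cut out by $X_0,\dots,X_{s-1}$ and $\sum_{i<s}a_iX_i$, whose counting functions are all $\mathrm{o}(T_{\bf f}(r))$, forcing $T_{[f_0:\cdots:f_{s-1}]}(r)=\mathrm{o}(T_{\bf f}(r))$.
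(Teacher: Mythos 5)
Your proof is correct, but it takes a genuinely different route from the paper's. The paper omits the proof, indicating it is to be run like that of Lemma \ref{Mgreen1}: reindex so that $f_0,\dots,f_{m-1}$ is a $K_{\bf f}$-basis of the span, write each remaining $f_i$ as a combination $\alpha_0f_0+\cdots+\alpha_\ell f_\ell$ with nonzero small coefficients, clear denominators, and apply Cartan's truncated second main theorem (Theorem \ref{tsmt}) to the auxiliary map $(h\alpha_0f_0,\dots,h\alpha_\ell f_\ell)$; zero-freeness of the $f_j$ makes all truncated counting functions ${\rm o}(T_{\bf f}(r))$, forcing $\ell=0$, and the basis indices are then handled using that all coefficients of the given relation are nonzero. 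You instead differentiate a minimal relation and eliminate, using only the lemma on the logarithmic derivative and the fact that $K_{\bf f}$ is a differential field — the classical Borel-type differentiation argument adapted to moving coefficients. This avoids the second main theorem altogether, keeps the whole computation over $K_{\bf f}$ (so the subtlety about subtuples having smaller characteristic never arises), and even yields the sharper structural fact that $a_if_i/(a_sf_s)$ is constant for every index in the minimal relation; the paper's SMT route, on the other hand, is the one that extends to Lemma \ref{Mgreen1}, where the $f_i$ may have zeros and only $k$-th powers occur, a setting in which your differentiation trick is unavailable. Two caveats, both of which you flagged and neither of which I count as a gap: (i) you prove the conclusion only for indices in the support of a minimal relation — but this is the correct reading and all the paper ever uses (e.g.\ in Theorem \ref{MTheorem10.3}), since for an $f_i$ not occurring in any dependence the literal ``for each $f_i$'' conclusion can fail (compare Lemma \ref{Mgreen1}, where every coefficient is assumed nonzero); (ii) the memberships $g_i'/g_i\in K_{\bf f}$ and $a_i'\in K_{\bf f}$ rest on logarithmic-derivative estimates that carry the usual exceptional set, whereas $K_{\bf f}$ is defined without one — but the paper's own proof of Lemma \ref{Mgreen1} passes from a $\le_{\rm exc}$ bound to membership in $K_{\bf f}$ in exactly the same way, so your argument is at the same level of rigor as the paper's.
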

The proof of this lemma is similar to the next one, therefore it is omitted.  
\begin{lemma}
    \label{Mgreen1}
    Let   $f_0,\hdots,f_n$ be non-zero entire functions without a common zero and let ${\bf f}=(f_0,\dots,f_n)$ be a holomorphic map from $\CC$ to  $\PP^n$.
Assume that   for an integer $k\ge n^2$ the following holds:
 \begin{align}\label{diageq}
  a_0f_0^k+\dots+a_nf_n^k=0,
  \end{align}
  where $a_i\ne 0\in K_{\bf f}$, $0\le i\le n$.
Then for each $f_i$, there exists $j\ne i$ such that   $(f_i/f_j)^k\in K_{\bf f}$.
\end{lemma}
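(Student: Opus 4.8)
The plan is to argue by induction on $n$, following the classical proof of Green's theorem (Lemma~\ref{greenlemma}) by way of Cartan's truncated second main theorem, but with a change of model that turns the moving coefficients $a_i$ into \emph{fixed} hyperplanes. We may assume throughout that all the holomorphic curves that occur are non-constant, since the constant case is trivial. For $n=1$ the relation $a_0f_0^k+a_1f_1^k=0$ gives $(f_0/f_1)^k=-a_1/a_0\in K_{\bf f}$ at once, and symmetrically for $(f_1/f_0)^k$.

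For the inductive step ($n\ge 2$) the key point is that, rather than the curve $[f_0^k:\cdots:f_{n-1}^k]$ --- which one would have to test against the \emph{moving} hyperplane $\sum_{i<n}a_ix_i=0$, the locus where $f_n$ vanishes --- one should work with
$$
{\bf k}:=[\,a_0f_0^k:a_1f_1^k:\cdots:a_{n-1}f_{n-1}^k\,]:\CC\to\PP^{n-1}.
$$
Then ${\bf k}$ meets the coordinate hyperplane $H_i=\{x_i=0\}$ exactly where $f_i=0$, up to the zeros and poles of $a_i$ (which contribute only $o(T_{\bf f}(r))$); and since $\sum_{i<n}a_if_i^k=-a_nf_n^k$, it meets $H_n=\{x_0+\cdots+x_{n-1}=0\}$ exactly where $f_n=0$, again up to $o(T_{\bf f}(r))$. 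The $n+1$ hyperplanes $H_0,\dots,H_n$ are in general position in $\PP^{n-1}$, one has $T_{\bf k}(r)=kT_{\bf f}(r)+o(T_{\bf f}(r))$, and so a meromorphic function is small with respect to ${\bf k}$ if and only if it is small with respect to ${\bf f}$.

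I would then distinguish two cases. If ${\bf k}$ is linearly degenerate over $\CC$, there is a relation $\sum_{i\in S}(c_ia_i)f_i^k=0$ with $\emptyset\neq S\subseteq\{0,\dots,n-1\}$ and $c_i\in\CC^*$; since no $f_i$ is identically zero we have $|S|\ge 2$, so this is a relation of the same form in fewer functions, and $k\ge n^2\ge(|S|-1)^2$. Dividing through by $d=\gcd\{f_i:i\in S\}$ restores the ``no common zero'' hypothesis, and since the small functions of $(f_i/d)_{i\in S}$ all lie in $K_{\bf f}$, the inductive hypothesis yields the conclusion for every $i\in S$. For $i\notin S$ I would fix $i_0\in S$, use the short relation to eliminate $f_{i_0}^k$ from the original relation $\sum_{l=0}^n a_lf_l^k=0$, obtaining a nontrivial relation (the coefficient of $f_l$ is still $a_l\neq 0$ for every $l\notin S$) in at most $n$ functions, and apply the inductive hypothesis to it; this covers every $l\notin S$, in particular $l=n$. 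If instead ${\bf k}$ is linearly nondegenerate over $\CC$, I would apply Cartan's truncated second main theorem (Theorem~\ref{tsmt}) to ${\bf k}$ in $\PP^{n-1}$ with the $n+1$ hyperplanes $H_0,\dots,H_n$, which gives $(1-\varepsilon)T_{\bf k}(r)\le_{\rm exc}\sum_{l=0}^n N^{(n-1)}_{\bf k}(H_l,r)$. Because every zero of $f_i$ (resp.\ of $f_n$) forces ${\bf k}$ to meet $H_i$ (resp.\ $H_n$) to order at least $k$, one obtains $N^{(n-1)}_{\bf k}(H_l,r)\le\frac{n-1}{k}T_{\bf k}(r)+o(T_{\bf k}(r))$ for each $l$, hence $\bigl(1-\varepsilon-\tfrac{n^2-1}{k}\bigr)T_{\bf k}(r)\le_{\rm exc}o(T_{\bf k}(r))$. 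Since $k\ge n^2$, choosing $\varepsilon<1/n^2$ forces $T_{\bf k}(r)$ to be bounded, so ${\bf k}$ is constant --- contradicting nondegeneracy in $\PP^{n-1}$ with $n-1\ge 1$.

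The main obstacle, and the point at which the argument genuinely uses the strength of Nevanlinna theory, is the truncation estimate $N^{(n-1)}_{\bf k}(H_l,r)\le\frac{n-1}{k}T_{\bf k}(r)+o(T_{\bf k}(r))$, which plays $n^2-1$ against $k\ge n^2$. The supporting technical points are setting up ${\bf k}$ so that the targets $H_0,\dots,H_n$ are \emph{constant} --- so that Theorem~\ref{tsmt} applies directly, with no moving Wronskian term to control --- and verifying that every contribution of the coefficients $a_i$ is absorbed into $o(T_{\bf f}(r))$. The remaining induction bookkeeping (restoring the ``no common zero'' hypothesis after dividing by a gcd, and reaching the indices outside $S$ by elimination) is routine once organized as above.
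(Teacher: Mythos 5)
Your core mechanism --- absorbing the small coefficients into the curve ${\bf k}=[a_0f_0^k:\cdots:a_{n-1}f_{n-1}^k]$ so that Cartan's truncated second main theorem (Theorem \ref{tsmt}) applies with \emph{constant} hyperplanes, and playing the truncation level against $k\ge n^2$ --- is exactly the engine of the paper's proof, and your nondegenerate case is sound at the top level, where \eqref{diageq} does give $T_{\bf k}(r)=kT_{\bf f}(r)+o(T_{\bf f}(r))$. The gap is in the degenerate case. The statement you are inducting on ties the field of small functions to the map formed by the functions appearing in the relation; to apply it to ${\bf f}'=(f_i/d)_{i\in S}$ you must know that the coefficients $c_ia_i$ are small with respect to ${\bf f}'$, i.e.\ $T_{a_i}(r)=o(T_{{\bf f}'}(r))$, whereas the hypothesis only gives $T_{a_i}(r)=o(T_{\bf f}(r))$, and $T_{{\bf f}'}(r)$ can be of strictly smaller order than $T_{\bf f}(r)$. (Your remark that the small functions of $(f_i/d)_{i\in S}$ lie in $K_{\bf f}$ is the containment $K_{{\bf f}'}\subseteq K_{\bf f}$, which would transfer the \emph{conclusion}, but it does not supply the \emph{hypothesis}.) Concretely, take $n=3$, $f_0=1$, $f_1=e^z$, $f_2=f_3=e^{z^2}$, $a_0=-e^{kz}$, $a_1=a_2=1$, $a_3=-1$: then \eqref{diageq} holds with all $a_i\in K_{\bf f}\setminus\{0\}$, the only $\CC$-degeneracy of ${\bf k}$ is $a_0f_0^k+a_1f_1^k=0$, so $S=\{0,1\}$, but $a_0$ is not small with respect to $(1,e^z)$, and the conclusion of the inducted statement, $(f_0/f_1)^k=e^{-kz}\in K_{(1,e^z)}$, is actually false, although the needed conclusion $(f_0/f_1)^k\in K_{\bf f}$ is true. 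The same field mismatch affects your elimination step for the indices outside $S$.

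The natural repair is to keep the reference map fixed and induct on the stronger statement that \emph{every} relation $\sum_{i\in S}b_if_i^k=0$ with $b_i\in K_{\bf f}\setminus\{0\}$ forces, for each $i\in S$, some $j\in S\setminus\{i\}$ with $(f_i/f_j)^k\in K_{\bf f}$. But then at the deeper levels the error terms coming from the $b_i$ are only $o(T_{\bf f}(r))$, not little-o of the characteristic of the curve built from the sub-relation, so the nondegenerate case no longer ends in a contradiction: Cartan's theorem instead yields that this characteristic is $o(T_{\bf f}(r))$, which is already the desired conclusion. Organized this way the argument essentially collapses to the paper's proof, which avoids induction altogether: one chooses a maximal $K_{\bf f}$-linearly independent subset of $f_0^k,\dots,f_n^k$, writes a shortest relation $f_i^k=\alpha_0f_0^k+\cdots+\alpha_\ell f_\ell^k$ (automatically $\CC$-nondegenerate after clearing denominators), applies Theorem \ref{tsmt} to the curve with coordinates $h\alpha_j\tilde f_j^k$, and concludes that the relevant ratios are small with respect to ${\bf f}$, contradicting $K_{\bf f}$-independence; no degeneracy dichotomy is needed.
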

The proof of this lemma can be found in \cite{Guo2}.  We include a slightly different proof for completeness.
 \begin{proof}
By reindexing the $f_i$, we may assume that $f_0^k,  f_1 ^k,\cdots, f_{m-1}^k$, for some $1\le m\le n$, is a basis of the $K_{\bf f}$-vector space  spanned by $f_0^k,  f_1 ^k,\cdots, f_{n}^k$.     It suffices to show that for $m\le i\le n$, there exists $0\le j\le m-1$ such that $(f_i/f_j)^k\in K_{\bf f}$ as $f_0^k,  f_1 ^k,\cdots, f_{m-1}^k$ are $K_{\bf f}$-linearly independent and the $f_i$ satisfy \eqref{diageq}.
Let $m\le i\le n$.  After possibly reindexing, we obtain
$$
 f_{i}^k=\alpha_0f_0^k+\alpha_{1}f_1 ^k+\dots +\alpha_{\ell} f_{\ell}^k,
$$
where $0\le \ell\le m-1$ and $\alpha_0,\ldots,\alpha_{\ell}\in K_{\bf f}\setminus\{0\}$.   If $\ell=0$, then   $(f_i/f_j)^k\in K_{\bf f}$ for some $0\le j\le m-1$.  Suppose that $\ell\ge 1$. 
Let $\beta$ be an entire function such that $\tilde f_0=f_0/\beta,\hdots,  \tilde f_{\ell}=f_{\ell}/\beta$ have no common zero, let $\tilde f_i=f_i/\beta$, and let
$$
\tilde{\mathbf f}_k:=[f_0^k:f_1^k:\cdots:f_{\ell}^k]= (\tilde f_0^k,\tilde f_1^k,\hdots,\tilde f_{\ell}^k).
$$
%Then
%\begin{align}\label{charf}
%T_{\frac{f_i}{f_0} }(r)\le T_{\tilde{\mathbf f}}(r)\le \sum_{i=1}^{m-1}T_{ \frac{f_i}{f_0} }(r)+O(1).
%\end{align}
Let $h$ be a meromorphic function such that $h\alpha_0\tilde f_0^k  ,\hdots,h\alpha_{\ell}\tilde f_{\ell}^k $ are entire functions with no common zeros, and let 
$$
\mathbf{F}_k:=(h\alpha_0\tilde f_0^k  ,\cdots,h\alpha_{\ell}\tilde f_{\ell}^k)
$$ be a holomorphic map from $\CC$ to $\PP^{\ell}(\CC)$.
Then
\begin{align*}%\label{charF1}
T_{\mathbf{F}_k}(r) &\le  T_{\tilde{\mathbf f}_k}(r) +\sum_{j=0}^{\ell}m_{\alpha_j}(\infty, r)+N_h(0,r)\cr
&\le T_{\tilde{\mathbf f}_k}(r)+\sum_{j=0}^{\ell}T_{\alpha_j}(r)+\sum_{j=0}^{\ell}N_{\alpha_j}(\infty,r)\cr
&\le  T_{\tilde{\mathbf f}_k}(r)+{\rm o}( T_{{\mathbf f}}(r)). 
\end{align*}
Similarly, by writing  $\tilde{\mathbf f}_k=((h\alpha_0)^{-1}h\alpha_0\tilde f_0^k,\hdots,(h\alpha_{\ell})^{-1}h\alpha_{\ell}\tilde f_{\ell}^k)$, we have
\begin{align}\label{charF}
T_{\tilde{\mathbf f}_k}(r)\le T_{\mathbf{F}_k}(r)+{\rm o}( T_{{\mathbf f}}(r)). 
\end{align}
 Moreover, the map $\mathbf{F}_k$ is linearly non-degenerate over $\CC$,
since  $f_0^k,  f_1 ^k,\cdots, f_{\ell}^k$ are $K_{\bf f}$-linearly independent.  Applying Theorem \ref{tsmt} to the map $\mathbf{F}_k$ with the coordinate hyperplanes $\{X_j=0\}$, $0\le j\le \ell$, and the diagonal hyperplane $\{X_0+\cdots+X_{\ell}=0\}$ of  $\PP^{\ell}(\CC)$,  for any $\epsilon>0$ we have
\begin{align*}%\label{truncation}
 T_{\mathbf{F}_k}(r)&\le_{\rm exc}  \sum_{j=0}^{\ell} N_{h\alpha_j \tilde f_j^k }^{(\ell)}(0,r)+N_{h \tilde f_i^k }^{(\ell)}(0,r) + \epsilon T_{\mathbf{F}_k}(r)\cr
       &\le_{\rm exc} \sum_{j=0}^{\ell} N_{\alpha_j}(0,r)+\ell\sum_{j=0}^{\ell  } N_{\tilde f_j}(0,r)+\ell N_{\tilde f_i}(0,r)+(\ell+2)N_h(0,r) +\epsilon T_{\mathbf{F}_k}(r)\cr
      &\le_{\rm exc} \frac{\ell}{k}(\ell+2) T_{\tilde{\mathbf f}_k}(r) +\epsilon T_{\mathbf{F}_k}(r)+{\rm o}( T_{{\mathbf f}}(r)).
\end{align*} 
Together with \eqref{charF}, for all $\epsilon>0$ we have 
   \begin{equation*} 
        \begin{aligned}
            kT_{\tilde{\mathbf f}_k}(r)\le_{\rm exc} (\ell^2+2\ell+\epsilon) T_{\tilde{\mathbf f}_k}(r)+{\rm o}( T_{{\mathbf f}}(r)).
            \end{aligned}
    \end{equation*} 
       Hence,
         \begin{equation*}
   (k-\ell^2-2\ell-\epsilon) T_{\tilde{\mathbf f}_k}(r)\le_{\rm exc} {\rm o}( T_{{\mathbf f}}(r)).
    \end{equation*}
 If $k\ge  n^2\ge(\ell+1)^2>\ell^2+2\ell$, then this implies that  $T_{ \frac{f_j}{f_0} }(r)\le T_{\tilde{\mathbf f}_k}(r)\le_{\rm exc} {\rm o}( T_{{\mathbf f}}(r))$ for $1\le j\le \ell$, and hence  $ \frac{f_j}{f_0}\in K_{\bf f}$ for $1\le j\le \ell$, contradicting that $f_0^k,  f_1 ^k,\cdots, f_{\ell}^k$ are linearly independent over $K_{\bf f}$.
   \end{proof}

%%%%%%%%%%%%%%%%%%%%%%%%%%
 %%%%%%%%%%%%%%%%%%%%%%%%%%
%%%%%%%%%%%%%%%%%%%%%%%%%%
\subsection{Key Theorem}
The following fundamental result is the analogue of Theorem \ref{Refinement}.

\begin{theorem}\label{Mfundamental}
Let $g_0,g_1,\dots,g_n $ be entire functions without common zeros and let ${\bf g}=(g_0,g_1,\dots,g_n)$.  
Let $F,G\in K_{\bf g}[x_0,x_1,\hdots,x_n]$ be coprime homogeneous polynomials of the same degree $d>0$.  Let $I$ be the set of exponents ${\bf i}$ such that ${\bf x}^{\bf i}$ appears with a nonzero coefficient in either $F$ or $G$.  Let $m\ge d$ be a positive integer.   Suppose that the set $\{g_0^{i_0}\dots g_n^{i_n}: i_0+\cdots+i_n=m\}$ is linearly independent over $K_{\bf g}$.  Then for any $\epsilon>0$, there exists a positive integer $L$ such that the following holds:
\begin{align*}
& MN_{\rm gcd}(F({\bf g}),G({\bf g}),r) \\
&\le_{\rm exc} c_{m,n,d}  \sum_{i=1}^n N^{(L)}_{ g_i}(0,r)+ \left(\frac{m}{n+1}\binom{m+n}{n}-c_{m,n,d}-M'm\right)\sum_{i=1}^nN_{g_i}(0,r)\\ 
  &+\binom {m+n-2d}{n}N_{\rm gcd}(\{{\bf g}^{\bf i}\}_{{\bf i}\in I},r)+ \left(M'mn+\epsilon m +\frac {M\epsilon}2\right)T_{\bf g}(r)+{\rm o}(T_{\bf g}(r)),
\end{align*}
where $c_{m,n,d}=2\binom {m+n-d}{n+1}- \binom {m+n-2d}{n+1}$,
$M=2\binom {m+n-d}{n}- \binom {m+n-2d}{n}$, and $M'$ is an integer of order $O(m^{n-2})$.
\end{theorem}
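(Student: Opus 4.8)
The plan is to run the proof of Theorem~\ref{Refinement} with $\CC$ replaced throughout by the field $K_{\bf g}$ of small functions, and with Cartan's second main theorem (Theorem~\ref{gsmt}) replaced by its moving-target analogue, Theorem~\ref{movingsmt}. First I would fix a $K_{\bf g}$-basis $\{\phi_1,\dots,\phi_M\}$ of $(F,G)_m$ (so $M=\dim_{K_{\bf g}}(F,G)_m$) and let $\Psi=(\psi_1,\dots,\psi_M)$ be a reduced representation of $\Phi({\bf g}):=(\phi_1({\bf g}),\dots,\phi_M({\bf g}))$. The hypothesis that $\{{\bf g}^{\bf i}:|{\bf i}|=m\}$ is linearly independent over $K_{\bf g}$ forces $\Psi$ to be linearly non-degenerate over $K_{\bf g}$, and one checks that $K_{\bf g}\subseteq K_\Psi$ (using that $(F,G)_m$ contains $FX_0^{m-d},\dots,FX_n^{m-d}$, so $T_{\bf g}(r)=O(T_\Psi(r))$), so that all the small functions occurring below remain small with respect to $\Psi$. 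For each $z$ I would build the greedy monomial basis ${\bf x}^{{\bf i}_1},\dots,{\bf x}^{{\bf i}_{M'}}$ of $K_{\bf g}[x_0,\dots,x_n]_m/(F,G)_m$ by minimizing $|{\bf g}(z)^{\bf i}|$ exactly as before, obtaining for each remaining ${\bf i}$ a linear form $L_{z,{\bf i}}$ with $L_{z,{\bf i}}(\phi_1,\dots,\phi_M)={\bf x}^{\bf i}+\sum_j c_{{\bf i},j}{\bf x}^{{\bf i}_j}\in(F,G)_m$. The only change is that the $c_{{\bf i},j}$ now lie in $K_{\bf g}$, so $L_{z,{\bf i}}$ is a moving hyperplane; since only finitely many monomial bases occur, the $c_{{\bf i},j}$ range over a fixed finite subset of $K_{\bf g}$, and the key inequality \eqref{keyinequality2} becomes
\begin{align*}
\log|L_{z,{\bf i}}(\Phi({\bf g}(z)))|\le\log|{\bf g}(z)^{\bf i}|+\sum_j\log^+|c_{{\bf i},j}(z)|+O(1),
\end{align*}
whose correction term integrates to ${\rm o}(T_{\bf g}(r))$.

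Next I would apply Theorem~\ref{movingsmt} to $\Psi$ (a nondegenerate curve in $\PP^{M-1}$) with the $M$ moving hyperplanes $L_{z,{\bf i}}$, $|{\bf i}|=m$, ${\bf i}\notin I_z$. This produces, for any $\varepsilon>0$,
\begin{align*}
\int_0^{2\pi}\sum_{|{\bf i}|=m,\,{\bf i}\notin I_z}\bigl(-\log|L_{z,{\bf i}}(\Phi({\bf g}(re^{i\theta})))|+\log\|\Phi({\bf g}(re^{i\theta}))\|\bigr)\frac{d\theta}{2\pi}+\frac1uN_W(0,r)\le_{\rm exc}\Bigl(\tfrac wuM+\varepsilon\Bigr)T_\Psi(r)+{\rm o}(T_\Psi(r)),
\end{align*}
where $W$ is the Wronskian of $\{hb_m\psi_k:1\le m\le w,\ 1\le k\le M\}$, $b_1,\dots,b_w$ is a basis of $V(t+1)$, and $h$ is meromorphic making the $hb_m$ entire without common zero. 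By \cite[Lemma~6]{Wang} the parameter $t$ may be chosen so that $w/u\le1+\epsilon/(2m)$; since $T_\Psi(r)\le mT_{\bf g}(r)+O(1)$ (the analogue of \eqref{height}, with an ${\rm o}(T_{\bf g}(r))$ error from the poles of the coefficients), taking $\varepsilon=\epsilon$ turns the right-hand side into $MT_\Psi(r)+(\epsilon m+\tfrac{M\epsilon}2)T_{\bf g}(r)+{\rm o}(T_{\bf g}(r))$, which accounts for the new $\tfrac{M\epsilon}2T_{\bf g}(r)$ term in the statement. The characteristic term $\int\log\|\Phi({\bf g})\|\frac{d\theta}{2\pi}$ equals $T_\Psi(r)+N_{\rm gcd}(F({\bf g}),G({\bf g}),r)+{\rm o}(T_{\bf g}(r))$ exactly as in \eqref{height}, and the sum of the $-\log|L_{z,{\bf i}}|$ terms is bounded below, via the key inequality and Jensen's formula as in \eqref{Li2}, by $-\bigl(\tfrac m{n+1}\binom{m+n}{n}-M'm\bigr)\sum_i N_{g_i}(0,r)-M'mn\,T_{\bf g}(r)+{\rm o}(T_{\bf g}(r))$. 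Combining these three facts gives the analogue of \eqref{maininequality} in which $N_W(0,r)$ is replaced by $\tfrac1uN_W(0,r)$, $O(1)$ by ${\rm o}(T_{\bf g}(r))$, and an extra $\tfrac{M\epsilon}2T_{\bf g}(r)$ is added on the right.

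It then remains to bound $\tfrac1uN_W(0,r)$ from below by enough to convert $\sum_i N_{g_i}(0,r)$ into $\sum_i N_{g_i}^{(L)}(0,r)$ and to produce the $\binom{m+n-2d}{n}N_{\rm gcd}(\{{\bf g}^{\bf i}\}_{{\bf i}\in I},r)$ term, and this is where the main difficulty lies. As in the fixed-target case it suffices to prove a local inequality
\begin{align*}
\tfrac1u\,v_z^+(W)\ge c_{m,n,d}\sum_i v_z^+(g_i)-\binom{m+n-2d}{n}\min_{{\bf i}\in I}v_z^+({\bf g}^{\bf i})-C
\end{align*}
with a constant $C$ independent of $z$ and $r$; the choice $L=C/c_{m,n,d}$ then finishes exactly as at the end of the proof of Theorem~\ref{Refinement}. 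In the fixed case one needed only $v_z^+(W(\Psi))\ge\sum_j v_z^+(\eta_j)-\tfrac12M(M-1)$ together with Lemma~\ref{mainlemma} applied to the weight-adapted basis $\{\eta_j=\beta_j({\bf g})/h\}$ furnished by the monomial ordering $>_{{\bf g}(z)}$; here $W$ is instead the Wronskian of the enlarged Steinmetz system $\{hb_m\psi_k\}$, so one must realize the weight-adapted family $\{hb_m\beta_j({\bf g})\}$, or a large enough subfamily of it, inside the $\CC$-span of $\{hb_m\psi_k\}$ --- for which one uses that the $b_m$ together with the finitely many coefficients of $F$ and $G$ all lie in $V(t+c)$ for a fixed $c$, and that $\dim V(t+c)/\dim V(t)\to1$ --- and then run the order-of-vanishing estimate of Lemma~\ref{mainlemma} on that subfamily, absorbing all the $K_{\bf g}$-changes of basis and all zeros and poles of the $b_m$ and of the coefficients of $F$ and $G$ into the ${\rm o}(T_{\bf g}(r))$ error and into the constant $C$ (which, being independent of $z$ and $r$, disappears into the choice of $L$). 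The delicate point --- matching the $1/u$ normalization, checking that the $b_m$ supply exactly the room needed, and confirming that the various ${\rm o}(T_{\bf g}(r))$ contributions assemble into one --- is the crux of the argument.
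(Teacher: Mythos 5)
Your outline follows the paper's own proof of Theorem \ref{Mfundamental} essentially step for step: the same $K_{\bf g}$-basis of $(F,G)_m$, the same greedy monomial basis and moving forms $L_{z,{\bf i}}$ with the corrected key inequality, the application of Theorem \ref{movingsmt} with $w/u\le 1+\epsilon/(2m)$ producing the extra $\tfrac{M\epsilon}{2}T_{\bf g}(r)$, the analogues of \eqref{height} and \eqref{Li2} with ${\rm o}(T_{\bf g}(r))$ errors, and the reduction to a pointwise inequality for $\tfrac1u v_z^+(W)$ settled by the weight-adapted basis of Lemma \ref{mainlemma} and the choice $L=C/c_{m,n,d}$. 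Up to that point the proposal is correct and is the paper's argument.

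The one place you stop short is exactly the step you call the crux, and the mechanism you float for it is not right as stated. If $V$ is built only from the hyperplane coefficients $b_{{\bf i},j,z}$, there is no reason the coefficients of $F$ and $G$ (let alone the $K_{\bf g}$ change-of-basis coefficients expressing each weight-adapted element $\beta_j$ in terms of the fixed basis $\phi_1,\dots,\phi_M$) should lie in any $V(t+c)$, so ``the $b_m$ together with the coefficients of $F$ and $G$ lie in $V(t+c)$'' is unjustified, and replacing $V(t+1)$ by $V(t+c)$ would anyway force you to reprove Theorem \ref{movingsmt} with a different Wronskian. The paper's resolution is simpler: define $V$ from the start to contain $1$, the coefficients of $F$ and $G$, and the finitely many $b_{{\bf i},j,z}$ (one may also throw in the finitely many transition coefficients between the bases $\{\beta_j\}$ and $\{\phi_k\}$ --- there are only finitely many such bases, so this is a fixed finite-dimensional space of small functions, and Theorem \ref{movingsmt} is indifferent to the enlargement). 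Then each $\eta_j=\beta_j({\bf g})/h$ is a $V$-linear combination of $\psi_1,\dots,\psi_M$, so $V(t)\cdot V\subseteq V(t+1)$ gives that $ab_m\eta_j$, for $1\le m\le u$ and $1\le j\le M$, lies in the $\CC$-span of the $Mw$ entire functions $\{ab_{m'}\psi_k\}$ whose Wronskian is $W$; since the multipliers $ab_m$ are entire, the basic Wronskian estimate yields
\begin{align*}
v_z^+(W)\;\ge\; u\sum_{j=1}^M v_z^+(\eta_j)-\tfrac12 Mw(Mw-1)
\end{align*}
for every $z$, which after dividing by $u$ and invoking \eqref{MBS} gives your local inequality with the uniform constant $C=\tfrac1{2u}Mw(Mw-1)$ and hence $L=C\,c_{m,n,d}^{-1}$. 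Note also that this must be a pointwise estimate valid at every $z$; one cannot simply ``absorb the $K_{\bf g}$-changes of basis into ${\rm o}(T_{\bf g}(r))$ and into $C$'' without first making the containment exact with entire multipliers as above, since a meromorphic change of basis would spoil the inequality at its poles, and only afterwards are such contributions harmless at the level of counting functions.
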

The basic ideas used to prove the theorem are similar to the ideas used in the proof of Theorem~\ref{Refinement}.  
We will make explicit the important differences in the moving target case and omit whatever is
  identical or obvious from the proof of  Theorem \ref{Refinement}.
\begin{proof}[Proof of Theorem \ref{Mfundamental}]
Let $(F,G)$ be the ideal generated by $F$ and $G$ in $K_{\bf g}[x_0, \cdots,x_n ]$ and  $(F,G)_m:=K_{\bf g}[x_0, \cdots,x_n ]_m\cap (F,G).$
We choose $\{\phi_1,\hdots,\phi_M\}$ to be a basis of the $K_{\bf g}$-vector space $(F,G)_m$ consisting of elements of the form $F{\bf x}^{\bf i}$ and $G{\bf x}^{\bf j}$.  
%As $\phi_i=FA_i+GB_i$ for $1\le i\le M$ for some $A_i,B_i\in K[X_0,\cdots,X_n]$, we may assume that the coefficients of the $A_i$ and $B_i$ are in $R$ since $K$ is the quotient field of $R$.   Then the $\phi_i$ are in $R[x_0, \cdots,x_n ]$.  Moreover, we take $\phi_1=FX_0^{m-d}$.  Then   the collection of all coefficients of $\phi_i$, $1\le i\le M$, have no common zero.   

For each $z\in \CC$, we construct a basis $B_z$ for $V_m=K_{\bf g}[x_0,\hdots,x_n]_m/(F,G)_m$ as follows.  
For ${\bf i}=(i_0,\cdots,i_n)\in\mathbb{N}^{n+1}$, we let ${\bf g}(z) ^{\bf i}   =g_0(z)^{i_0}\cdots g_n(z)^{i_n}$.
Choose a monomial ${\bf x}^{{\bf i}_1}$   so that    $ |{\bf g}(z)^{{\bf i}_1}| $ is minimal subject to the condition ${\bf x}^{{\bf i}_1}\notin (F,G).$  Suppose now that ${\bf x}^{{\bf i}_1},\hdots,{\bf x}^{{\bf i}_j}$ have been constructed.  Then we let ${\bf x}^{{\bf i}_{j+1}}\in K_{\bf g}[x_0,\hdots,x_n]_m$ be a monomial such that $|{\bf g}(z) ^{{\bf i}_{j+1}}|$ is minimal subject to the condition that 
${\bf x}^{{\bf i}_1},\hdots,{\bf x}^{{\bf i}_{j+1}}$ are linearly independent modulo $(F,G)_m$.  In this way, we construct a basis of $V_m$ with monomial representatives ${\bf x}^{{\bf i}_1},\hdots,{\bf x}^{{\bf i}_{M'}}$, where $M'=\dim V_m$.
Let $I_z=\{ {\bf i}_1,\hdots,{\bf i}_{M'}\}$.  Then for each ${\bf i}$, $|{\bf i}|= m$, we have
\begin{align*}
{\bf x}^{\bf i}+\sum_{j=1}^{M'}b_{{\bf i},j,z}{\bf x}^{{\bf i}_j}\in (F,G)_m
\end{align*}
for some choice of coefficient $b_{{\bf i},j,z}\in K_{\bf g}$.  
%We may further write $b_{{\bf i},j,z}=c_{{\bf i},j,z}/c_{{\bf i},0,z}$ where $c_{{\bf i},j,z}\in R$, $0\le j\le M'$ as $K$ is the quotient field of $R$.
Then for each such ${\bf i}$ there is a linear form $L_{z,{\bf i}}$ over $K_{\bf g}$ such that 
\begin{align}\label{coefficient}
L_{z,{\bf i}}(\phi_1,\hdots,\phi_M)= {\bf x}^{\bf i}+\sum_{j=1}^{M'}b_{{\bf i},j,z}{\bf x}^{{\bf i}_j}.
\end{align}
We note that there are only finitely many choice of $b_{{\bf i},j,z}$ even as $z$ runs through all of $\CC$, and 
%Secondly, $c_{{\bf i},0,z},\hdots, c_{{\bf i},M,z}$ may have common zero.  However,  it only happen for $r=|z|$ in a set with Lebesgue measure zero.
%Therefore, we will ignore this and assume that $c_{{\bf i},0,z},\hdots, c_{{\bf i},M,z}$ have no common zero for each ${\bf i}\in I_z$.
 $\{L_{z,{\bf i}}(\phi_1,\hdots,\phi_M) \,|\, |{\bf i}|= m, {\bf i}\notin I_z\}$ is a basis for $(F,G)_m$.
From the definition of ${\bf x}^{{\bf i}_1},\hdots,{\bf x}^{{\bf i}_{M'}}$, we have the key inequality
\begin{align}\label{keyinequality2m}
\log |L_{z,{\bf i}}(\Phi ({\bf g}(z))|\le \log |{\bf g}(z)^{{\bf i} }|+ \log\| L_{z,{\bf i}}(z)\|+O(1),
\end{align}
where $\Phi ({\bf g}(z)) =(\phi_1({\bf g}(z)),\hdots,\phi_M({\bf g}(z)))$, $\| L_{z,{\bf i}}(z)\|=\max_{0\le j\le M'} |b_{{\bf i},j,z}(z)|$, and $b_{{\bf i},0,z}=1$.  
The  map $ (\phi_1({\bf g} ),\hdots,\phi_M({\bf g} ) )$ may not be a reduced presentation of $\Phi ({\bf g} )$.  Let $h$ be a meromorphic function such that $F({\bf g} )/h$ and $G({\bf g})/h$ are entire and have no common zeros, i.e., 
\begin{align}\label{hzero}
N_h(0,r)=N_{\rm gcd}(F({\bf g}),G({\bf g}),r).
\end{align}     Moreover, since $g_i$, $0\le i\le n$, are entire functions, the poles of $h$ comes from the poles of the coefficients of $F$ and $G$, and hence
\begin{align}\label{hpole}
N_h(\infty ,r)\le {\rm o}(T_{\bf g}(r)).
\end{align}    
Let $\psi_i:=\phi_i({\bf g})/h $, $1\le i\le M$.  By the choice of  $\phi_i $,  the function $\psi_i$ is entire for $1\le i\le M$.  Furthermore, since $FX_i^{m-d}, GX_i^{m-d}\in (F,G)_m$, $0\leq i\leq n$, and $g_0,\ldots, g_n$ have no common zero, the functions $\psi_i$, $1\le i\le M$, have no common zero.  Hence  $\Psi:=(\psi_1 ,\hdots,\psi_M )$ is a reduced form of $\Phi ({\bf g})$.
 
Let $V$ be the complex vector space spanned by $1$, the coefficients of $F$ and $G$, and all possible (finitely many) choices of $b_{{\bf i},j,z}$ in (\ref{coefficient}).
Let $t$ be a large positive integer and let $V(t)$ be the finite-dimensional vector space spanned by products of $t$ elements in $V$.

Let $b_1=1,b_2,\cdots,b_u$ be a basis of $V(t)$ and $b_1,\cdots,b_w$ be a basis of $V(t+1)$. 
We will choose $t$ sufficiently large so that 
$\frac wu\le 1+\frac{\epsilon}{2m}$.  Denote by $W$ the Wronskian of $\{a b_m\psi_k\,|\, 1\le m\le w,\,  1\le k\le M\}$, where $a$ is an entire function such that $a=a b_1, a b_2,\cdots,ab_w$  are entire and have no common zeros.  Applying Theorem \ref{movingsmt} to the holomorphic map $\Phi ({\bf g})$ with the reduced form $\Psi=(\psi_1 ,\hdots,\psi_M )$ and the set of linear forms $\{L_{z,{\bf i}}  \,|\, |{\bf i}|= m, {\bf i}\notin I_z\}$, we have the following inequality: 
\begin{align}\label{useMSMT}
 \int_0^{2\pi}  \sum_{|{\bf i}|= m, {\bf i}\notin I_z}\lambda_{L_{z,{\bf i}}(\Psi(re^{i\theta}))}(\Psi ( re^{i\theta}))
   \frac{d\theta}{2\pi} +\frac 1u N_{W}(0,r)\leq_{\operatorname{exc}} (\frac wu M+\varepsilon)T_{\Psi}(r)+{\rm o}(T_{\Psi }(r)),
\end{align} 
where
\begin{align}\label{lambda}
\lambda_{L_{z,{\bf i}}(\Psi(re^{i\theta}))}(\Psi ( re^{i\theta}))=-\log |L_{z,{\bf i}}(\Phi ({\bf g}(z))|+\log\|\Phi ({\bf g}(z))\| +\log\|L_{z,{\bf i}}(z)\|,
\end{align}
if $z=re^{i\theta}$ is not a pole of any coefficient of $L_{z,{\bf i}}$.  We note that such $z$ is in a discrete subset of $\CC$, and hence its radius $r\in (0,\infty)$ is in a  set of finite Lebesgue measure.
In the following computation, we will only consider $z$ which is not a pole of any $b_{{\bf i},j,z}$ from \eqref{coefficient}.
Next, we will derive a lower bound for the first term of the left hand side of (\ref{useMSMT}).  By the definition of the characteristic function, Lemma \ref{Jensen}, \eqref{hzero} and \eqref{hpole}, we have
\begin{align*} 
\int_0^{2\pi}   \log\|\Phi ({\bf g}(z)) )\|  \frac{d\theta}{2\pi} 
&= \int_0^{2\pi}\log \max\{|\psi_1(z),\hdots, \psi_M(z)|\}+\log| h(z)| \frac{d\theta}{2\pi}\cr
&= T_{\Psi}(r)+ N_{\rm gcd}(F({\bf g}),G({\bf g}),r)+{\rm o}(T_{\bf g}(r)).
\end{align*}

On the other hand, since $\phi_i=\sum_{I}\alpha_Ix^I\in K_{\bf g}[x_0,\hdots,x_n]_m$,
$$
\log|\phi_i({\bf g}(z))|\le m\log\max\{|g_0(z)|,\hdots, |g_n(z)|\}+\sum_{I}\log^+|\alpha_I(z)|+O(1),
$$
and hence
\begin{align*}
 \int_0^{2\pi}  \log\|\Phi ({\bf g}(z))\|  \frac{d\theta}{2\pi}
 \le m  T_{\bf g}(r)+{\rm o}( T_{\bf g}(r)).
\end{align*}
In conclusion, we have
\begin{align}\label{Mheight}
N_{\rm gcd}(F({\bf g}),G({\bf g}),r)+T_{\Psi}(r)=\int_0^{2\pi}   \log\|\Phi ({\bf g}(z))\|  \frac{d\theta}{2\pi} +{\rm o}( T_{\bf g}(r)) \le m T_{\bf g}(r)+{\rm o}( T_{\bf g}(r)).
\end{align} 

Now, we use the key inequality (\ref{keyinequality2m}),   the estimates in \eqref{estimateL} and the two equations following it to derive
\begin{align}\label{MestimateL}
   \sum_{|{\bf i}|= m, {\bf i}\notin I_z} &(-\log |L_{z,{\bf i}}(\Phi ({\bf g}(z)))| +\| L_{z,{\bf i}}(z)\|)
 \ge  -\sum_{|{\bf i}|= m, {\bf i}\notin I_z} \log | {\bf g}(z)^{{\bf i} }|+O(1)\cr
&\ge-\frac{m}{n+1}\binom{m+n}{n}\sum_{i=0}^n\log|g_i(z)|+M'm\sum_{i=0}^n\log |g_i(z)|\cr
&\qquad\qquad-M'mn\max\{\log|g_0(z)|,\hdots,\log|g_n(z)|\}+O(1),
\end{align}
for  $z\in\CC$  that  is not a pole of any $b_{{\bf i},j,z}$ from \eqref{coefficient}.
%\begin{align}\label{MestimateL}
%   \sum_{|{\bf i}|= m, {\bf i}\notin I_z}& (-\log |L_{z,{\bf i}}(\Phi ({\bf g}(z)))| +\| L_{z,{\bf i}}(z)\|)
 %  \ge  -\sum_{|{\bf i}|= m, {\bf i}\notin I_z} \log | {\bf g}(z)^{{\bf i} }|\cr
 % &\ge   -\sum_{|{\bf i}|= m } \log | {\bf g}(z)^{{\bf i} }|+   \sum_{|{\bf i}|= m, {\bf i}\in I_z} \log | {\bf g}(z)^{{\bf i} }|\cr
 % &\ge -\sum_{|{\bf i}|= m } \log | {\bf g}(z)^{{\bf i} }|+  M'm\log\min\{ | g_0(z)|,\hdots, | g_n(z)|\}\cr
 % &=-\sum_{|{\bf i}|= m } \log | {\bf g}(z)^{{\bf i} }|-  M'm\log\max\{ \frac 1{| g_0(z)|},\hdots, \frac1{ | g_n(z)|}\}
 %  \end{align}
%where   $z$ which is not a pole of any $b_{{\bf i},j,z}$ from \eqref{coefficient}.
% Note  that
%\begin{align*}
%\sum_{|{\bf i}|= m}\log|{\bf g}(z)^{\bf i}|=\frac{m}{n+1}\binom{m+n}{n}\sum_{i=0}^n\log|g_i(z)|,
%\end{align*}
%and 
%\begin{align*}
%-\log &\max\{\frac 1{| g_0(z)|},\hdots,\frac1{ | g_n(z)|}\}\\
%&=\sum_{i=0}^n\log |g_i(z)|-\log\max\{ \frac {\prod_{i=0}^n|g_i(z)|}{| g_0(z)|},\hdots,\frac {\prod_{i=0}^n|g_i(z)|}{ | g_n(z)|}\}\\
%&\ge \sum_{i=0}^n\log |g_i(z)|-n\max\{\log|g_0(z)|,\hdots,\log|g_n(z)|\}.
%\end{align*}
By Lemma \ref{Jensen}, Jensen's formula, the integration of (\ref{MestimateL}) from 0 to $2\pi$ over 
$d\theta$ gives
 \begin{align*}
\int_0^{2\pi}  &\sum_{|{\bf i}|= m, {\bf i}\notin I_z} (-\log |L_{z,{\bf i}}(\Phi ({\bf g}(z) ))|+\| L_{z,{\bf i}}(z)\| ) \frac{d\theta}{2\pi}\cr
&\ge_{\operatorname{exc}}  -\left(\frac{m}{n+1}\binom{m+n}{n}-M'm\right)\sum_{i=0}^nN_{g_i}(0,r)-M'mnT_{\bf g}(r)+{\rm o}( T_{\bf g}(r)).
\end{align*}

Together with (\ref{useMSMT}) and  (\ref{Mheight}), we have
\begin{align}\label{lambdaI}
MN_{\rm gcd}(F({\bf g}),G({\bf g}),r)&+\frac 1{u} N_{W}(0,r)\leq_{\operatorname{exc}} \left(\left(\frac wu-1\right)M+M'n +\epsilon \right) mT_{\bf g}(r) \cr
 & +\left(\frac{m}{n+1}\binom{m+n}{n}-M'm\right)\sum_{i=0}^nN_{g_i}(0,r) +{\rm o}( T_{\bf g}(r)).
\end{align}
Since $F$ and $G$ are coprime, the ideal $(F,G)$ defines a closed subset of $\mathbb A^n$ of codimension at least 2.  As is well-known from the theory of Hilbert functions and Hilbert polynomials, this implies that $M'=O(m^{n-2})$.

It then suffices to show that there exists a large integer $L$ (to be determined later) such that 
  \begin{equation}
    \label{Mtruncation2}
        \begin{aligned}
        c_{m,n,d}\sum_{i=0}^n N_{g_i}(0,r) - \binom {m+n-2d}{n}N_{\rm gcd}(\{{\bf g}^{\bf i}\}_{{\bf i}\in I},r)-\frac 1uN_{W }(0,r)\le   c_{m,n,d}\sum_{i=0}^n N^{(L)}_{g_i}(0,r).
        \end{aligned}
    \end{equation} 
The above inequality can be deduced from the inequality
  \begin{equation}
    \label{Mlocaleq2}
        \begin{aligned}
       c_{m,n,d}\sum_{i=0}^n  v_{z}^+ ( g_i)- \binom {m+n-2d}{n}\min_{{\bf i}\in I}v_z^+({\bf g}^{\bf i}) -\frac1uv_{z}^+ ( W )\le  c_{m,n,d}\sum_{i=1}^n \min\{L,v_{z}^+  (g_i)\},
        \end{aligned}
    \end{equation} 
 for $z\in\CC$.  The inequality holds trivially if $v_{z}^+(g_i)\le  L$ for each $1\le i\le n$.
Therefore, we only need to consider the case where $v_{z}^+(g_i)> L$ for some $1\le i\le n$.

For $z\in\CC$, we define a monomial ordering $>_{{\bf g}(z)}$ on $A=K_{\bf g}[x_0,\cdots,x_n]$ using the weight vector ${\bf u}=(v_z(g_0),\ldots, v_z(g_n))$, where we set $g_0=1$.  Let
  \begin{align*}
B_1&=\{F_1{{\bf x}^{\bf i}}:\, |{\bf i}|=m-d\},\\
B_2&=\{F_2{{\bf x}^{\bf i}}:\, |{\bf i}|=m-d\},\\
B'_1&=\{F_1{\rm TM}_{{\bf g}(z)}(F_2){\bf x}^{\bf i}:\, |{\bf i}|=m-2d\},
\end{align*}
where $\{F_1,F_2\}=\{F,G\}$ and ${\rm TM}_{{\bf g}(z)}(F_2)\le {\rm TM}_{{\bf g}(z)}(F_1)$.
By Lemma \ref{mainlemma},
$B=(B_1\setminus B_1')\cup B_2$
is a basis for $(F,G)_m$. 
Write $B=\{\beta_1,\hdots,\beta_M\}$, which depends on $z$, although there are only a finite number of choices of such a basis.
Let $\eta_j= \beta_j({\bf g})/h$ (note that the coefficients of the $\beta_j$  come from the coefficients of $F$ and $G$).  From the definition of $>_{{\bf g}(z)}$ and $F_2$, it follows that 
\begin{align*}
v_z^+({\rm TM}_{{\bf g}(z)}(F_2))=\min_{{\bf i}\in I}v_z^+({\bf g}^{\bf i}), 
\end{align*}
where $I$ is the set of exponents ${\bf i}$ such that ${\bf x}^{\bf i}$ appears with a nonzero coefficient in either $F$ or $G$.
Then similar to \eqref{BS},  by the second part of Lemma \ref{mainlemma}, we have  for each $z\in \CC$,
\begin{align}\label{MBS} 
\sum_{j=1}^M  v_{z}^+ (\eta_j)  
%&\geq  \sum_{i=0}^n (
%\sum_{s\in B_1} \ord_{x_i} \frac {s}{F_1}+\sum_{s\in B_2} \ord_{x_i} \frac {s}{F_2}-\sum_{s\in B'_1} \ord_{x_i} \frac {s}{F_1})
%\cdot v_{z}^+ (g_i)  \cr
%&
\geq  c_{m,n,d}\sum_{i=1}^{n}v_{z}^+ (g_i)-\binom {m+n-2d}{n}\min_{{\bf i}\in I}v_z^+({\bf g}^{\bf i}).
\end{align} 
%where 
%$$
%c_{m,n,d}=2\binom {m+n-d}{n+1}- \binom {m+n-2d}{n+1}.
%$$ 

On the other hand, from the definition of the $\phi_i$ and $\beta_i$, we see that 
$\eta_j$ is a  $V$-linear combination of 
$\psi_k$, $1\le k\le M$.  Therefore, $ab_m\eta_j$, $1\le m\le u$, $1\le j \le M$, is a $\CC$-linear combination of 
$\{a b_m\psi_k\,|\, 1\le m\le w,\,  1\le k\le M\}$.
As $W$ is the Wronskian of $\{a b_m\psi_k\,|\, 1\le m\le w,\,  1\le k\le M\}$, where $a$ is an entire function such that $a=a b_1, a b_2,\cdots,ab_w$  are entire and have no common zeros, from the basic properties of Wronskians we have
 \begin{equation}
    \label{MordW2}
        \begin{aligned}
     v_{z}^+(W)&\ge  \sum_{m=1}^u\sum_{j=1}^M  v_{z}^+ (ab_m\eta_j)  -\frac12Mw(Mw-1)\cr
     &\ge  u\sum_{j=1}^M  v_{z}^+ (\eta_j)  -\frac12Mw(Mw-1)\quad\text{(since $ab_m$ is entire for each $m$)}.        
      \end{aligned}
 \end{equation}  
 
Combining \eqref{MBS}  and \eqref{MordW2}, we obtain that
\begin{equation}
    \label{Mlocal2}
        \begin{aligned}
         & c_{m,n,d}\sum_{i=0}^{n}v_{z}^+ (g_i)-\binom {m+n-2d}{n}\min_{{\bf i}\in I}v_z^+({\bf g}^{\bf i})- \frac1uv_{z}^+(W(\Psi))\cr
 &\le  \frac 1{2u}  Mw (Mw-1) .        
        \end{aligned}
    \end{equation}
Let $L=\frac 1{2u}  Mw (Mw-1)c_{m,n,d}^{-1} $.   
The assumption that $v_{z}^+(g_i)> L$ for some $0\le i\le n$ implies that 
\begin{align}\label{ML2}
\frac 1{2u}  Mw(Mw-1)=c_{m,n,d}L\le c_{m,n,d}\sum_{i=0}^n \min\{L,v_{z}^+ (g_i)\}.
\end{align}

Therefore, the inequality (\ref{Mlocaleq2}) can be deduced from (\ref{Mlocal2}) and (\ref{ML2}).
 \end{proof}

%%%%%%%%%%%%%%%%%%%%%%
\subsection{Proof of the main theorems}
The following theorem is the moving target version of Theorem \ref{Theorem10.3}.
Denote by   $R_{\bf g}\subset K_{\bf g}$,  the subring of entire functions
\begin{theorem}\label{MTheorem10.3}
Let $g_1,\hdots,g_n$ be  entire functions  without common zeros and ${\bf g}=(1,g_1,\dots,g_n)$. Let  $G\in R_{\bf g}[x_1,\hdots,x_n]$ be a nonconstant polynomial of degree $d$ such that $G(0,\hdots,0)$ is not identically zero.   Assume that  $g_1^{i_1} \cdots g_n^{i_n} \notin K_{\bf g}$  for any 
index set $(i_1,\dots,i_n)\in\mathbb Z^n\setminus \{(0,\dots,0)\}$.  For all $\epsilon>0$,  
\begin{enumerate}
\item there exists a positive integer $k_0$ such that for all $k\ge k_0$,
$$
m_{G(g_1^k,\hdots,g_n^k)}(0,r)\le_{\rm exc}\epsilon  \sum_{i=1}^n T_{g_i^k}(r);
$$
\item if in addition each $g_i$ has no zero, $1\le i\le n$, then 
$$
m_{G(g_1,\hdots,g_n)}(0,r)\le_{\rm exc}\epsilon  \sum_{i=1}^n T_{g_i}(r).
$$
\end{enumerate}
\end{theorem}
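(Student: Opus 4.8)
The plan is to adapt the proof of Theorem \ref{Theorem10.3}, making two changes: Green's lemma is replaced by its moving-target version (Lemma \ref{Mgreen1}, or Lemma \ref{Mborel1} for part (b)), and---since we have no truncated second main theorem for moving targets---Cartan's truncated theorem (Theorem \ref{tsmt}) is applied not to $\frak g(k)$ directly but to a rescaled holomorphic curve for which the relevant hyperplanes are all \emph{constant}. First I would establish linear independence over $K_{\bf g}$. Identifying the degree-$\le d$ monomials in $g_1,\dots,g_n$ with the degree-$d$ monomials ${\bf g}^{\bf i}$ ($|{\bf i}|=d$) in ${\bf g}=(1,g_1,\dots,g_n)$---entire functions with no common zero, since the coordinate $1$ never vanishes---I claim that for $k$ larger than a bound depending only on $n$ and $d$, the family $\{{\bf g}^{k{\bf i}}:|{\bf i}|\le d\}$ is linearly independent over $K_{\bf g}$. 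A nontrivial $K_{\bf g}$-relation with minimal support would, after applying Lemma \ref{Mgreen1} to the $d$-uple Veronese of ${\bf g}$ (a reduced holomorphic curve with the same field $K_{\bf g}$ of small functions, since the two characteristic functions differ only by the factor $d$), yield $({\bf g}^{{\bf i}-{\bf j}})^{k}\in K_{\bf g}$ for some ${\bf i}\ne{\bf j}$; since $T_{u^{k}}(r)=kT_{u}(r)+O(1)$, this forces ${\bf g}^{{\bf i}-{\bf j}}\in K_{\bf g}$, contradicting the hypothesis. For part (b), where the $g_{i}$ have no zeros, Lemma \ref{Mborel1} gives the same conclusion already for $k=1$.

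Next I would set up the rescaled curve. Writing $G=a_{0}+\sum_{j=1}^{\ell}a_{j}{\bf x}^{{\bf i}(j)}$ with $a_{j}\in R_{\bf g}\setminus\{0\}$ and $|{\bf i}(j)|\le d$, the function $G(g_{1}^{k},\dots,g_{n}^{k})=a_{0}+\sum_{j=1}^{\ell}a_{j}{\bf g}^{k{\bf i}(j)}$ is entire. Let $\frak g(k)=(1,{\bf g}^{k{\bf i}(1)},\dots,{\bf g}^{k{\bf i}(\ell)})$ and let ${\bf H}_{k}:\CC\to\PP^{\ell}$ be the reduced representative of $[a_{0}:a_{1}{\bf g}^{k{\bf i}(1)}:\cdots:a_{\ell}{\bf g}^{k{\bf i}(\ell)}]$, obtained by dividing its (entire) coordinates by an entire common factor $b$ with $N_{b}(0,r)\le N_{a_{0}}(0,r)={\rm o}(T_{\bf g}(r))$. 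Since the $a_{j}$ are small functions, $T_{{\bf H}_{k}}(r)=T_{\frak g(k)}(r)+{\rm o}(T_{\bf g}(r))$, and by the previous step ${\bf H}_{k}$ is linearly nondegenerate over $\CC$. The point of this rescaling is that under the diagonal substitution $Y_{j}=a_{j}X_{j}$ the $\ell+1$ coordinate hyperplanes of $\PP^{\ell}$ remain coordinate hyperplanes, while the \emph{moving} hyperplane $\sum_{j}a_{j}X_{j}=0$ becomes the constant diagonal hyperplane $\sum_{j}Y_{j}=0$, whose pullback along ${\bf H}_{k}$ is the zero divisor of $b^{-1}G(g_{1}^{k},\dots,g_{n}^{k})$.

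Then I would apply Theorem \ref{tsmt} to ${\bf H}_{k}$ with these $\ell+2$ constant hyperplanes, which are in general position:
\begin{align*}
(1-\epsilon)T_{{\bf H}_{k}}(r)\le_{\rm exc}\sum_{j=0}^{\ell}N^{(\ell)}_{{\bf H}_{k}}(\{X_{j}=0\},r)+N^{(\ell)}_{{\bf H}_{k}}(\{\textstyle\sum_{j}X_{j}=0\},r).
\end{align*}
The $j=0$ coordinate term is $\le N_{a_{0}}(0,r)={\rm o}(T_{\bf g}(r))$; for $j\ge 1$ the coordinate term is at most $N_{a_{j}}(0,r)+\tfrac{\ell}{k}N_{{\bf g}^{k{\bf i}(j)}}(0,r)\le\tfrac{\ell}{k}T_{{\bf H}_{k}}(r)+{\rm o}(T_{\bf g}(r))$; and the diagonal term is $\le N_{G(g_{1}^{k},\dots,g_{n}^{k})}(0,r)$. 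Hence for $k>\ell^{2}/\epsilon$ one gets $(1-2\epsilon)T_{{\bf H}_{k}}(r)\le_{\rm exc}N_{G(g_{1}^{k},\dots,g_{n}^{k})}(0,r)+{\rm o}(T_{\bf g}(r))$. Combining this with the first main theorem for the entire function $G(g_{1}^{k},\dots,g_{n}^{k})$ and the elementary bound $T_{G(g_{1}^{k},\dots,g_{n}^{k})}(r)\le T_{\frak g(k)}(r)+{\rm o}(T_{\bf g}(r))$ (from $|G(g_{1}^{k},\dots,g_{n}^{k})|\le(\ell+1)(\max_{j}|a_{j}|)\,\|\frak g(k)\|$) yields $m_{G(g_{1}^{k},\dots,g_{n}^{k})}(0,r)\le_{\rm exc}2\epsilon T_{{\bf H}_{k}}(r)+{\rm o}(T_{\bf g}(r))$; and since $T_{{\bf H}_{k}}(r)\le dT_{(1,g_{1}^{k},\dots,g_{n}^{k})}(r)+{\rm o}(T_{\bf g}(r))\le d\sum_{i=1}^{n}T_{g_{i}^{k}}(r)+{\rm o}(T_{\bf g}(r))$ by Lemma \ref{gineq}, rescaling $\epsilon$ proves (a). Part (b) is the same with $k=1$: the independence comes from Lemma \ref{Mborel1}, and $N_{g_{i}}(0,r)=O(1)$ makes the coordinate terms negligible with no condition on $k$.

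The main obstacle is the first step: extracting a genuine multiplicative relation ${\bf g}^{\bf j}\in K_{\bf g}$ out of a $K_{\bf g}$-linear dependence among $k$-th-power monomials. This rests on the moving Borel/Green lemmas together with the two bookkeeping facts that $K_{\bf g}$ is unchanged under the Veronese embedding and is closed under taking $k$-th roots inside the meromorphic functions. The remaining work---checking that the common factor $b$, the rescaling $Y_{j}=a_{j}X_{j}$, and the zero and pole divisors of the coefficients $a_{j}$ all contribute only ${\rm o}(T_{\bf g}(r))$, and that the diagonal substitution genuinely turns the single moving hyperplane into a constant one so that Theorem \ref{tsmt} suffices in place of a (here unavailable) truncated moving second main theorem---is routine and parallels the proof of Lemma \ref{Mgreen1}.
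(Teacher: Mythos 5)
Your proposal is correct and follows essentially the same route as the paper: the paper also deduces $K_{\bf g}$-linear independence of the power monomials from Lemma \ref{Mgreen1} (resp.\ Lemma \ref{Mborel1} for part (b)), and its map $\frak g(k)=(h^{-1}a_{{\bf i}(0)},h^{-1}a_{{\bf i}(1)}{\bf g}^{k{\bf i}(1)},\hdots,h^{-1}a_{{\bf i}(\ell)}{\bf g}^{k{\bf i}(\ell)})$ is exactly your rescaled curve ${\bf H}_k$, to which the constant-coefficient truncated theorem (Theorem \ref{tsmt}) is applied with the coordinate and diagonal hyperplanes, the small-function and common-factor contributions being absorbed into ${\rm o}(T_{\bf g}(r))$ just as you do. Your explicit remark that $({\bf g}^{{\bf i}-{\bf j}})^{k}\in K_{\bf g}$ forces ${\bf g}^{{\bf i}-{\bf j}}\in K_{\bf g}$ is a small step left implicit in the paper, and it is correct.
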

 \begin{proof} 
By  Lemma \ref{Mgreen1}, the set $\{{\bf g}^{k{\bf i}}:=g_1^{ki_1}\cdots g_n^{ki_n}\,|\, {\bf i}=(i_1,\hdots,i_n)\in \mathbb{N}^n, |{\bf i}|\le d\}$ 
is $K_{\bf g}$-linearly independent for $k\ge \binom {d+n}{n}^2$.  Since $G(0,\hdots,0)\ne 0$, by  arranging the index set in some order, we may write  
$$
G=a_{{\bf i}(0)}+\sum_{j=1}^{\ell}a_{{\bf i}(j)}{\bf x}^{{\bf i}(j)},
$$  
where $a_{{\bf i}(j)}\in R_{\bf g}\setminus \{0\}$ for $0\le j\le \ell$.
Then we have  
$$
G(g_1^k,\hdots,g_n^k)=a_{{\bf i}(0)}+\sum_{j=1}^{\ell}a_{{\bf i}(j)}{\bf g}^{k{\bf i}(j)}.
$$  
Let $h$ be an entire function such that $h^{-1} a_{{\bf i}(0)}, h^{-1} a_{{\bf i}(1)}{\bf g}^{k{\bf i}(1)},\hdots,h^{-1} a_{{\bf i}(\ell)}{\bf g}^{k{\bf i}(\ell)}$ have no common zero.  Hence,
\begin{align*}
N_h(0,r)\le N_{a_{{\bf i}(0)}}(0,r)\le T_{a_{{\bf i}(0)}}(r)\le {\rm o}( T_ {\bf g}(r)).
\end{align*}
We apply  Theorem \ref{tsmt}, Cartan's truncated second main theorem, to the holomorphic map 
$$
\frak g(k):=(h^{-1}a_{{\bf i}(0)}, h^{-1}a_{{\bf i}(1)}{\bf g}^{k{\bf i}(1)},\hdots,h^{-1}a_{{\bf i}(\ell)}{\bf g}^{k{\bf i}(\ell)}):\CC\to \mathbb P^{\ell}
$$ associated with the above expression ($k\ge \binom {d+n}{n}^2$), and with the set of coordinate hyperplanes of $\mathbb P^{\ell}$ and the diagonal hyperplane $\sum_{i=0}^{\ell}X_j$. Then for any $\epsilon>0$, we have
\begin{align*} 
(1-\frac \epsilon2)T_{\frak g(k)}\le_{\operatorname{exc}} N^{(\ell)}_{G(g_1^k,\hdots,g_n^k)}(0,r)+\sum_{j=1}^{\ell} N^{(\ell)}_{{\bf g}^{k{\bf i}(j)}}(0,r)+{\rm o}( T_ {\bf g}(r)).
\end{align*}
We note that
$$
N^{\ell}_{{\bf g}^{k{\bf i}(j)}}(0,r)\le  \frac{\ell}k N_{{\bf g}^{k{\bf i}(j)}}(0,r)\le  \frac{\ell}k T_{{\bf g}^{k{\bf i}(j)}}(r)\le \frac{d\ell}k\max_{1\le i\le n}\{T_{g_i^k}(r)\}+{\rm o}( T_ {\bf g}(r)).
$$
 Then for $k>\max\{\frac{2\ell^2}{\epsilon}, \binom {d+n}{n}^2\}$,
\begin{align}\label{Musetsmta}
(1- \epsilon )T_{\frak g(k)}(r)&\le_{\operatorname{exc}} N_{G(g_1^k,\hdots,g_n^k)}(0,r)+{\rm o}( T_ {\bf g}(r))\cr
&=T_{G(g_1^k,\hdots,g_n^k)}(r)-m_{G(g_1^k,\hdots,g_n^k)}(0,r)+{\rm o}( T_ {\bf g}(r)).
\end{align}
Since  $G(g_1^k,\hdots,g_n^k)$ is an entire function, 
\begin{align}
T_{G(g_1^k,\hdots,g_n^k)}(r)&=m_{G(g_1^k,\hdots,g_n^k)}(\infty,r)\cr
&\le  T_{\frak g(k)}+N_{h}(0,r)+O(1)\le  T_{\frak g(k)}+{\rm o}( T_ {\bf g}(r)).
\end{align}
Consequently,
$$
m_{G(g_1^k,\hdots,g_n^k)}(0,r)\le_{\operatorname{exc}}  \epsilon T_{\frak g(k)}(r)\le_{\operatorname{exc}} d \epsilon T_ {(1,g_1^k,\cdots,g_n^k)}(r).
$$

When  the $g_i$ are entire functions without zeros, we may assume  that the set $\{{\bf g}^{ {\bf i}}:=g_1^{ i_1}\cdots g_n^{ i_n}\,|\, {\bf i}=(i_1,\hdots,i_n)\in \mathbb{N}^n, |{\bf i}|\le d\}$ 
is linearly independent over $K_{\bf g}$ by Lemma \ref{Mborel1}.  Then we can repeat the previous argument  for $k=1$ with the additional condition $N_{g_i}(0,r)=O(1)$, $1\le i\le n$, to conclude the proof.
 \end{proof}

The proofs of Theorem   \ref{Mgcdunit}, Theorem \ref{MgcdPn} and Corollary \ref{Mdivisibility} are similar to their constant analogues as the necessary tools have been developed.  Therefore, we will not repeat the proofs here.

%%%%%%%%%%%%%%%%%%%%

 \end{document}